\newcommand{\sK}{{\mathcal K}}
\newtheorem{lemma1}{}[section]
\newenvironment{lemma}{\begin{lemma1}{\bf Lemma.}}{\end{lemma1}}
\newenvironment{theorem}{\begin{lemma1}{\bf Theorem.}}{\end{lemma1}}
\newenvironment{proposition}{\begin{lemma1}{\bf Proposition.}}{\end{lemma1}}
\newenvironment{corollary}{\begin{lemma1}{\bf Corollary.}}{\end{lemma1}}
\newenvironment{remark}{\begin{lemma1}{\bf Remark.}\rm}{\end{lemma1}}
\newenvironment{definition}{\begin{lemma1}{\bf Definition.}}{\end{lemma1}}
\newenvironment{notation}{\begin{lemma1}{\bf Notation.}}{\end{lemma1}}
\newenvironment{remark*}{{\bf Remark.}}{}
\newenvironment{example*}{{\bf Example.}}{}
\newenvironment{assumption*}{{\bf Assumption.}}{}
\newcommand{\R}{\ensuremath{\mathbb{R}}}
\newcommand{\Q}{\ensuremath{\mathbb{Q}}}
\newcommand{\Z}{\ensuremath{\mathbb{Z}}}
\newcommand{\C}{\ensuremath{\mathbb{C}}}
\newcommand{\N}{\ensuremath{\mathbb{N}}}
\newcommand{\PP}{\ensuremath{\mathbb{P}}}
\newcommand{\holom}[3]{\ensuremath{#1:#2  \rightarrow #3}}
\newcommand{\fibre}[2]{\ensuremath{#1^{-1} (#2)}}
\newcommand\sA{{\mathcal A}}
\newcommand\sF{{\mathcal F}}
\newcommand\sH{{\mathcal H}}
\newcommand\sI{{\mathcal I}}
\newcommand\sO{{\mathcal O}}
\newcommand\sC{{\mathcal C}}
\newcommand\bR{{\mathbb R}}
\newcommand\bC{{\mathbb C}}
\newcommand\sD{{\mathcal D}}
\newcommand\pp{{\overline \partial}}
\DeclareMathOperator*{\pic}{Pic}
\DeclareMathOperator*{\sing}{sing}
\DeclareMathOperator*{\nons}{nons}
\newcommand{\Chow}[1]{\ensuremath{\mbox{\rm Chow}(#1)}}
\DeclareMathOperator*{\supp}{Supp}
\newcommand{\NEX}{\overline{\mbox{NE}}(X)}
\newcommand{\NAX}{\overline{\mbox{NA}}(X)}
\newcommand{\NE}[1]{ \ensuremath{ \overline { \mbox{NE} }(#1)} }
\newcommand{\NA}[1]{ \ensuremath{ \overline { \mbox{NA} }(#1)} }
\DeclareMathOperator*{\NS}{NS}
\title{Minimal Models for K\"ahler threefolds} 
\date{April 19 2015}
\subjclass[2000]{32J27, 14E30, 14J30, 32J17, 32J25}
\keywords{MMP, rational curves, Zariski decomposition, K\"ahler manifolds}
\author{Andreas H\"oring}
\author{Thomas Peternell}
\address{Andreas H\"oring, Laboratoire de Math{\'e}matiques J.A. Dieudonn{\'e},
UMR 7351 CNRS, Universit{\'e} de Nice Sophia-Antipolis, 06108 Nice Cedex 02, France        
}
\email{hoering@unice.fr}
\address{Thomas Peternell, Mathematisches Institut, Universit\"at Bayreuth, 95440 Bayreuth, 
Germany}
\email{thomas.peternell@uni-bayreuth.de}
\begin{document}

\begin{abstract} 
Let $X$ be a compact K\"ahler threefold that is not uniruled. 
We prove that $X$ has a minimal model.
\end{abstract}

\maketitle

\section{Introduction}

The minimal model program (MMP) is one of the cornerstones in the classification theory of  complex projective varieties. 
It is fully developed only in dimension 3, despite tremendous recent progress in higher dimensions, in particular by
\cite{BCHM10}. In the analytic K\"ahler situation the basic methods from the MMP, such as the base point free theorem, fail. Nevertheless it seems reasonable to expect that
the main results should be true also in this more general context. 

The goal of this paper is to establish the minimal model program for K\"ahler threefolds $X$ whose canonical bundle $K_X$ is pseudoeffective. To be more specific,  we prove the following result:

\begin{theorem} \label{theoremminimalmodel}
Let $X$ be a  normal $\Q$-factorial compact   K\"ahler threefold with at most terminal singularities.
Suppose that $K_X$ is pseudoeffective or, equivalently, that $X$ is not uniruled. Then $X$ has a minimal model, i.e. 
there exists a MMP 
$$
X \dashrightarrow X'
$$
such that $K_{X'}$ is nef.
\end{theorem}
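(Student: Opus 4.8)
The plan is to run a minimal model program directly on $X$. If $K_X$ is already nef there is nothing to prove, so assume it is not. The first task is a cone and contraction theorem in the K\"ahler category: I would show that the failure of nefness produces a $K_X$-negative extremal ray $R$ in the cone of curves $\NE{X}$, and that $R$ can be contracted by a surjective morphism $f\colon X \to Y$ onto a normal compact K\"ahler space, with $-K_X$ relatively ample over $Y$. Because $X$ is not uniruled, $f$ cannot be of fiber type: a fiber-type Mori contraction would cover $X$ by rational curves and force uniruledness, contradicting the pseudo-effectivity of $K_X$ (using the stated equivalence). Hence $f$ is bimeromorphic, and therefore either divisorial or small.

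In the divisorial case $Y$ is again a $\Q$-factorial compact K\"ahler threefold with at most terminal singularities, and the relevant numerical invariant --- the Picard number, i.e. the rank of the space carrying $\NE{X}$ --- drops by one. In the small (flipping) case $f$ contracts only finitely many curves, and I would produce the flip $X \dashrightarrow X^{+}$, where $f^{+}\colon X^{+} \to Y$ is small and $K_{X^{+}}$ is $f^{+}$-ample; again $X^{+}$ is a $\Q$-factorial terminal compact K\"ahler threefold. In either case I replace $X$ by $Y$ or by $X^{+}$ and iterate, noting that pseudo-effectivity of the canonical class is preserved under the bimeromorphic maps occurring in the program.

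Termination is handled by the usual threefold bookkeeping. Divisorial contractions strictly decrease the Picard number and so occur only finitely often; for the flips I would invoke termination of threefold flips, via the difficulty function or a special-termination argument controlled by the divisorial Zariski decomposition, which cannot decrease indefinitely. After finitely many steps the process must stop, and it can stop only when no $K_{X'}$-negative extremal ray remains in $\NE{X'}$, that is, precisely when $K_{X'}$ is nef.

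The main obstacle, and the heart of the matter, is the contraction theorem together with the existence of flips in the K\"ahler setting: the projective proof rests on the base point free theorem, which fails for general compact K\"ahler manifolds because one cannot freely manufacture global sections out of nef and big classes. I expect to circumvent this by exploiting the divisorial Zariski decomposition $K_X = P + N$ of Boucksom together with the rigidity of low-dimensional geometry --- the contractible loci sit inside the negative part $N$, and on a threefold the relevant contractions and flips are sufficiently constrained that one can construct them analytically (for instance by reducing to the projective or Moishezon situation on the exceptional locus, or by a direct construction of the flipped space) rather than through sections of $mK_X$. This analytic replacement of the base point free theorem is where the real work lies.
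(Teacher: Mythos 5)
Your overall skeleton --- contract a $K_X$-negative extremal ray, flip or contract divisorially, iterate, terminate by Picard number plus the difficulty function --- is the same as the paper's, and your identification of the contraction theorem as the real obstacle is accurate. But there is a concrete gap in the choice of cone. You propose to extract extremal rays from the Mori cone $\NE{X}$ generated by curve classes. In the K\"ahler category this is not enough: even if you construct a bimeromorphic morphism $f\colon X \to Y$ contracting exactly the curves in a $K_X$-negative extremal ray of $\NE{X}$, there is no reason for $Y$ to be a K\"ahler space, and without that the induction collapses at the first step (you can no longer invoke Brunella's equivalence, the Zariski decomposition arguments, or indeed the next round of the cone theorem on $Y$). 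The paper instead proves a cone theorem for the larger cone $\NA{X}$ generated by \emph{positive closed currents of bidimension $(1,1)$}, which by Demailly--P\u{a}un is dual to the nef (closure of the K\"ahler) cone. The supporting class $\alpha$ of an extremal ray in $\NA{X}$ is then strictly positive on all of $\NA{X}\setminus R$, descends to a class on $Y$ that is strictly positive on $\NA{Y}\setminus\{0\}$, and the numerical characterization of the K\"ahler cone (Theorem \ref{kaehler2} in the paper) converts this into a K\"ahler form on $Y$. A supporting class that is only positive on $\NE{X}\setminus R$ gives no control over currents that are not limits of curve classes, so this last step fails. Note the two cones genuinely differ: already for projective manifolds $\NE{X}$ spans a space of dimension $\rho(X)$ while $\NA{X}$ spans $H^{n-1,n-1}(X)$.

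Beyond that, your sketch of how to replace the base point free theorem is pointing in roughly the right direction (the divisorial Zariski decomposition of $K_X$ does drive the construction), but it remains a plan rather than an argument. The paper's actual route is: bend-and-break on the uniruled surfaces $S_j$ appearing in the negative part of the Zariski decomposition to get the cone theorem; for divisorial rays, a nef reduction of the supporting class on the contracted surface plus the Ancona--Van Tan generalization of Grauert's contraction criterion; for small rays, a case distinction according to whether some $S_j$ or the residual part $N(K_X)$ is negative on the ray, in each case producing an ideal sheaf supported on the contracted curves with ample conormal sheaf (via the Hodge index theorem, or via a modification $\mu\colon\tilde X\to X$ with $\mu_*\tilde\alpha=\alpha$ and an analysis of the $\mu$-exceptional divisor). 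None of this is automatic, and your proposal as written does not supply it.
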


In our context a complex space is said to be $\Q$-factorial if every Weil divisor is $\Q$-Cartier {\it and}  
a multiple $(K_X^{\otimes m})^{**}$  of  the canonical sheaf $K_X$ is locally free. 
If $X$ is a projective threefold, Theorem \ref{theoremminimalmodel} was established 
by the seminal work of Kawamata, Koll\'ar, Mori, Reid and Shokurov \cite{Mor79, Mor82, Rei83, Kaw84, Kaw84b, Kol84, Sho85, Mor88, KM92}.
Based on the deformation theory of rational curves on smooth threefolds \cite{Kol91, Kol96}, Campana and the second-named 
author started to investigate the existence of Mori contractions \cite{CP97, Pet98, Pet01}. Nakayama \cite{Nak02b} proved the existence of an (even good) minimal model
for a K\"ahler threefold of algebraic dimension $2.$

The equivalence of $X$ being non-uniruled and $K_X$ being pseudoeffective for a threefold $X$ is a remarkable result due to Brunella \cite{Bru06}\footnote{If $X$ is projective
this is known in arbitrary dimension by \cite{BDPP13}.}.  This settles in particular the problem of the existence $K_X$-negative 
rational curves in case $K_X$ is not 
pseudoeffective. If $K_X$ is pseudoeffective, but not nef, ad hoc methods provide $K_X$-negative rational curves, as we will show in this paper. 
One main step in the MMP is the construction of extremal contractions, which in the projective case follows from the base-point free theorem. In the K\"ahler case we need
a completely different approach. 

Restricting from now on to varieties $X$ with 
$K_X$ pseudoeffective, we
consider the 
divisorial Zariski decomposition \cite{Bou04}
$$
K_X = \sum_{j=1}^r \lambda_j S_j + N(K_X).
$$
Here $N(K_X)$ is an $\mathbb R-$divisor which is ``nef in codimension one''. 
If $K_X|_{S_j}$ is not pseudoeffective we use a (sub-)adjunction argument to show that $S_j$ is uniruled.
We can then prove that $K_X$ is not nef (in the sense of \cite{DPS94}) if and only if there
exists a curve $C \subset X$ such that $K_X \cdot C<0$. In Section \ref{sectionBB} we show how deformation theory
on the threefold $X$ and the (maybe singular) surfaces $S_j$ can be used to establish an analogue
of Mori's bend and break technique. As a consequence we derive  
the cone theorem for the Mori cone $\NEX$ (cf. Theorem \ref{theoremNE}).  

However the Mori cone $\NEX$ is not the right object to consider: even if we find a bimeromorphic morphism
$X \rightarrow Y$ contracting exactly the curves lying on some $K_X$-negative extremal ray in $\NEX$,
it is not clear that $Y$ is a K\"ahler space. However it had been observed in \cite{Pet98} that 
the K\"ahler condition could be preserved if we contract extremal rays in $\NAX$, the
cone generated by positive closed currents of bidimension $(1,1)$.
Based on the description of $\NAX$ by Demailly and P\v aun \cite{DP04}, we prove the following
cone theorem:

\begin{theorem} \label{theoremNA}
Let $X$ be a normal $\Q$-factorial compact K\"ahler threefold with at most terminal singularities
such that $K_X$ is pseudoeffective. 
Then there exists a countable family $(\Gamma_i)_{i \in I}$  of rational curves  on $X$
such that 
$$
0 < -K_X \cdot \Gamma_i \leq 4
$$
and
$$
\NAX = \NAX_{K_X \geq 0} + \sum_{i \in I} \R^+ [\Gamma_i] 
$$
\end{theorem}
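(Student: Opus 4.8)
The plan is to prove a cone theorem for $\NAX$, the cone of positive closed currents of bidimension $(1,1)$, rather than for the usual Mori cone $\NEX$. The fundamental tool is the Demailly--P\v aun description of $\NAX$ and its dual cone, together with the bend-and-break results already established (the cone theorem for $\NEX$, i.e. Theorem~\ref{theoremNE}, is available). The overall strategy mirrors the projective cone theorem: one wants to show that the $K_X$-negative part of $\NAX$ is locally polyhedral, generated by rational curves of bounded anticanonical degree.

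First I would set up the two cones carefully. Recall $\NAX$ is the closed cone generated by classes of positive closed $(1,1)$-currents (equivalently, the closure of the cone of effective curves in the appropriate dual of the K\"ahler cone). One checks that $\NEX \subseteq \NAX$ and that on the $K_X$-negative side the two cones should coincide, since a class there can be represented by actual curves via bend and break. The point is that contracting rays of $\NAX$ rather than $\NEX$ will preserve the K\"ahler property, as noted in \cite{Pet98}; but for the \emph{structure} statement we may transport information from $\NEX$. Concretely, I expect to show that any extremal ray $R$ of $\NAX$ with $K_X \cdot R < 0$ actually lies in $\NEX$, so that the family $(\Gamma_i)$ produced by Theorem~\ref{theoremNE} (rational curves with $0 < -K_X \cdot \Gamma_i \le 4$, the bound coming from bend-and-break on a threefold) already generates the negative part.

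The core of the argument is then a cone decomposition. Writing $\NAX_{K_X \ge 0}$ for the part where $K_X$ is nonnegative, I would argue that
$$
\NAX = \NAX_{K_X \geq 0} + \sum_{i \in I} \R^+ [\Gamma_i],
$$
by the standard device: suppose a class $\alpha \in \NAX$ is not in the right-hand side. Using the supporting hyperplanes given by the dual K\"ahler-type positivity (Demailly--P\v aun), separate $\alpha$ from the right-hand cone by a class $D$ that is nonnegative on the right side but negative on $\alpha$; perturbing $D$ by an ample/K\"ahler class and using the discreteness of the $K_X$-negative rays (which follows because $-K_X \cdot \Gamma_i$ is bounded and these classes accumulate only on $\{K_X = 0\}$), derive a contradiction. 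The countability of the family, and the fact that the rays accumulate only along the hyperplane $K_X = 0$, is exactly what the bound $-K_X \cdot \Gamma_i \le 4$ buys us.

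The hard part, and where the K\"ahler setting genuinely differs from the projective one, will be the separation/supporting-hyperplane step: in the projective case one separates classes using $\Q$-Cartier divisors and Kleiman-type ampleness, whereas here one must work with the transcendental cone $\NAX$ and its dual described by \cite{DP04}. I expect the main technical obstacle to be showing that a class $D$ which is strictly positive on all the $\Gamma_i$ and on $\NAX_{K_X\ge 0}$ can be chosen to be \emph{modified K\"ahler} (or nef in the sense of \cite{DPS94}), so that the Demailly--P\v aun duality applies and forces $D$ to be nonnegative on all of $\NAX$. Controlling the singularities of $X$ (terminal, $\Q$-factorial) and passing between $X$ and a resolution, while keeping track of the positivity of these transcendental classes, is the delicate point; once that is in place, the decomposition and the degree bound follow formally from Theorem~\ref{theoremNE}.
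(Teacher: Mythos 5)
Your high-level strategy -- transporting the cone theorem from $\NEX$ to $\NAX$ via the Demailly--P\v aun description of the dual K\"ahler cone -- is the same as the paper's (Theorem \ref{theoremNApreliminary} plus Proposition \ref{propositionNA}), and your separation formulation is essentially dual to the paper's direct argument. But the step you defer as ``the main technical obstacle'' is precisely the content of the proof, and your proposal does not supply it. Unwinding your separation argument: given $D$ nonnegative on $V:=\NAX_{K_X\ge 0}+\sum\R^+[\Gamma_i]$ and negative on some $\alpha\in\NAX$, you must show $D$ is nef, i.e.\ (pulling back to a K\"ahler desingularisation and using \cite[Cor.0.3]{DP04}) nonnegative on all generators $[\hat\omega]^2$, $[\hat\omega]\cdot[\hat S]$, $[\hat C]$ of $\overline{NA}(\hat X)$. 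The classes $[\hat\omega]^2$ land in $\NAX_{K_X\ge0}$ because $K_X$ is pseudoeffective, and the $[\hat C]$ land in $\NEX\subset V$ by Theorem \ref{theoremNE}; the genuinely transcendental case is $[\hat\omega]\cdot[\hat S]$ with $K_X\cdot\pi_*([\hat\omega]\cdot[\hat S])<0$. Here the paper uses Lemma \ref{lemmasurfaces}: such an $S$ is one of the surfaces in the divisorial Zariski decomposition of $K_X$, and is projective and uniruled, so $H^2(\hat S,\sO_{\hat S})=0$; hence the restricted K\"ahler class is an $\R$-divisor class, nef and big, and is a limit of classes of effective $1$-cycles, placing $[\hat\omega]\cdot[\hat S]$ in $\overline{V}$ (one then removes the closure with Lemma \ref{lemmaclosure}). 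Without this adjunction/uniruledness input your contradiction does not close, because a class nonnegative on curves and on $\NAX_{K_X\ge0}$ has no a priori reason to be nonnegative on transcendental classes of the form $[\omega]\cdot[S]$.

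A second, independent gap is the bound. Theorem \ref{theoremNE} does \emph{not} give $-K_X\cdot\Gamma_i\le 4$: its constant $d$ is $\max\{3,b\}$ where $b$ depends on the degrees of curves in $S_{j,\sing}$ and $S_j\cap S_{j'}$, and can be arbitrarily large. The paper first proves Theorem \ref{theoremNApreliminary} with this weaker bound, then obtains $4$ a posteriori: for small extremal rays via Theorem \ref{theoremveryrigid} (very rigid curves satisfy $-K_X\cdot C\le 1$), and for divisorial rays by first constructing the contraction (Theorem \ref{theoremcontraction}, which only needs the preliminary bound) and then applying \cite[Thm.7.46]{Deb01} to the resulting projective morphism. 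Your proposal attributes the bound $4$ directly to bend-and-break on the threefold, which is not how it is obtained and would not be correct as stated.
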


We then proceed to prove the existence of contractions of an extremal ray $\R^+ [\Gamma_i]$. 
If the curves $C \subset X$ such that $[C] \in \R^+ [\Gamma_i]$ cover a divisor $S$, we can use
a generalisation of Grauert's criterion \cite{Gra62} by Ancona and Van Tan \cite{AV84}.

If the curves in the extremal ray cover only a 1-dimensional set $C$ (so the contraction, provided it exists, is small),
the problem is more subtle. We consider a nef supporting class $\alpha,$ i.e., $\alpha \cdot C = 0.$ It turns
out that $\alpha$ is actually big: $\alpha^3 > 0.$ Using results of Collins - Tosatti \cite{CT13} and Boucksom \cite{Bou04}, we establish a 
modification $\mu: \tilde X \to X$ and a K\"ahler class $\tilde \alpha $ on $\tilde X$ such that 
$$\mu^*(\alpha) = \tilde \alpha + E$$
with $E$ an effective $\mu-$antiample $\mu-$exceptional divisor. After possibly a small perturbation of $E$, 
we show that $E = \sum m_i D_i + R$ where the $D_i$ cover $\mu^{-1}(C)$ and $R$ is disjoint from $E$. 
Therefore the conormal bundle $N^*_{\sum m_i D_i}$ is $\mu-$ample, and by Grauert's criterion, there is 
a morphism $\psi: \tilde X \to Y$ contracting $\bigcup D_i$ to a point. Then $\psi$ factors through a map 
$\varphi: X \to Y$ which is the contraction of $C$ we are looking for.

In summary we have proven the
contraction theorem:

\begin{theorem} \label{theoremcontraction}
Let $X$ be a normal  $\Q$-factorial compact K\"ahler threefold with at most terminal singularities
such that $K_X$ is pseudoeffective. Let $\R^+ [\Gamma_i]$ be a $K_X$-negative extremal ray in $\NAX$.
Then the contraction of $\R^+ [\Gamma_i]$ exists in the K\"ahler category
(cf. Definition \ref{definitioncontraction}).
\end{theorem}

Since Mori's theorem \cite{Mor88} assures the existence of flips also in the analytic category, termination being known by
Shokurov \cite{Sho85}, we can now 
run the MMP and obtain Theorem \ref{theoremminimalmodel}. 

By \cite[Thm.0.3]{DP03} this also implies that
the non-vanishing conjecture holds for compact K\"ahler threefolds:

\begin{corollary}
Let $X$ be a  normal $\Q$-factorial compact (non-projective)   K\"ahler threefold with at most terminal singularities.
Then $X$ is uniruled if and only if $\kappa(X)=-\infty$.
\end{corollary}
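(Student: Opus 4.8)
The statement follows by combining the minimal model theorem with known non-vanishing results, so the plan is to assemble these ingredients and check their compatibility rather than to develop new techniques.

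First I would dispatch the implication ``$X$ uniruled $\Rightarrow \kappa(X)=-\infty$'', which is classical and uses none of the machinery above. If $X$ is uniruled, a general point lies on a rational curve $C$ with $K_X\cdot C<0$; passing to a resolution (which changes neither uniruledness nor $\kappa$, since $X$ has only terminal singularities) we may assume $X$ smooth, and then any section of $mK_X$ pulls back, along the normalization $\PP^1\to C$, to a section of a line bundle of negative degree, hence vanishes on $C$. As such curves cover $X$, we obtain $H^0(X,mK_X)=0$ for all $m>0$, whence $\kappa(X)=-\infty$.

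For the converse I would argue by contraposition, proving ``$X$ not uniruled $\Rightarrow \kappa(X)\geq 0$''. By Brunella's theorem \cite{Bru06} the non-uniruledness of the threefold $X$ is equivalent to $K_X$ being pseudo-effective. Theorem \ref{theoremminimalmodel} then yields an MMP $X\dashrightarrow X'$ whose target $X'$ is again a normal $\Q$-factorial compact K\"ahler threefold with at most terminal singularities and with $K_{X'}$ nef. The point that needs checking is that $\kappa$ is unchanged along this MMP: every elementary step is either a divisorial contraction $\phi:Y\to Z$, for which the relation $mK_Y=\phi^*(mK_Z)+E_m$ with $E_m\geq 0$ exceptional gives $H^0(Y,mK_Y)=H^0(Z,mK_Z)$, or a flip, which is an isomorphism in codimension one and hence preserves the spaces of global sections of the reflexive sheaves $\sO(mK)$. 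In both cases the plurigenera are preserved, so $\kappa(X)=\kappa(X')$.

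It then remains to apply the non-vanishing result \cite[Thm.0.3]{DP03} to the minimal model $X'$: since $K_{X'}$ is nef, that theorem yields $\kappa(X')\geq 0$, and therefore $\kappa(X)=\kappa(X')\geq 0$, which is the desired contrapositive. I do not anticipate a genuine obstacle in this argument, as all the depth resides in Theorem \ref{theoremminimalmodel} and in the cited non-vanishing theorem; the only items requiring attention are the bookkeeping that $\kappa$ is a bimeromorphic invariant preserved step by step, and the observation that the projective case formally excluded by the hypothesis is already classical (cf. \cite{BDPP04}).
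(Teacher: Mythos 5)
Your proposal is correct and follows exactly the route the paper intends: the paper derives this corollary in one line by combining Theorem \ref{theoremminimalmodel} with the non-vanishing result \cite[Thm.0.3]{DP03} applied to the minimal model, and your write-up simply makes explicit the routine ingredients (the classical direction, Brunella's equivalence, and the invariance of $\kappa$ under divisorial contractions and flips) that the paper leaves to the reader.
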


Concerning abundance, the results of this paper, using \cite{Pet01} yield: 

\begin{corollary} \label{cor:abund} Let $X$ be a  normal $\Q$-factorial compact (non-projective)   K\"ahler threefold with at most terminal singularities.
Suppose that $K_X$ is nef. Then $mK_X$ is spanned for some positive $m$, unless (possibly) there is no positive-dimensional
subvariety through the very general point of $X$ and $X$ is not bimeromorphic to $T/G$ where $T$ is a torus and $G$ a finite group
acting on $T.$ 
\end{corollary}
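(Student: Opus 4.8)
The plan is to prove that $K_X$ is semi-ample, i.e.\ that $mK_X$ is spanned for some $m>0$, which is exactly the abundance statement for the minimal model $X$ (here $K_X$ is nef by hypothesis, so $X$ is already minimal and Theorem~\ref{theoremminimalmodel} is not needed). As a starting point I would record the non-vanishing already in hand: $K_X$ nef implies $K_X$ pseudo-effective, hence $X$ is not uniruled by Brunella \cite{Bru06}, hence $\kappa(X)\geq 0$ by the preceding corollary. The task is then to upgrade this to $\kappa(X)=\nu(K_X)$ together with base-point-freeness of a multiple, and I would organise the argument by the numerical dimension $\nu=\nu(K_X)\in\{0,1,2,3\}$, the main input being \cite{Pet01} (with \cite{Nak02b} in the algebraic-reduction case).

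First I would dispose of $\nu=3$. If $K_X$ were nef and big then $\kappa(X)=3$ and $X$ would carry a big line bundle, hence be Moishezon; but a Moishezon K\"ahler space is projective, contradicting the hypothesis that $X$ is non-projective. Thus $\nu(K_X)\leq 2$.

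For $\nu\in\{1,2\}$ I would exploit the algebraic reduction of $X$, the goal being to realise a multiple of $K_X$ as the pullback of an ample class from the base of a fibration and thereby obtain $\kappa(X)=\nu(K_X)$. When the algebraic dimension equals $2$ this is precisely the content of \cite{Nak02b}, which produces a \emph{good} minimal model; the remaining subcases of positive numerical dimension are treated in \cite{Pet01} by analysing the fibration whose general fibre is a curve or surface with numerically trivial canonical class. In all of these cases $K_X$ comes out semi-ample, so $mK_X$ is spanned.

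The decisive case is $\nu=0$, i.e.\ $K_X\equiv 0$; here I would try to upgrade numerical triviality to torsion in $\pic(X)$, which immediately gives $mK_X=\sO_X$ spanned. If through the very general point of $X$ there passes a positive-dimensional subvariety, one can use the resulting covering family — restricting $K_X$ and running an adjunction argument along its members, as in \cite{Pet01} — to force $K_X$ to be torsion; and if $X$ is bimeromorphic to a torus quotient $T/G$ then $K_X$ is visibly torsion. What remains is exactly the excluded configuration: $X$ simple (no positive-dimensional subvariety through the very general point) and not bimeromorphic to a torus quotient. I expect this to be the genuine obstacle: in the absence of any proper positive-dimensional subvariety there is no fibration and nothing on which to run an adjunction or deformation argument, and establishing the implication $c_1(X)=0\Rightarrow K_X$ torsion for such hypothetical ``simple non-Kummer'' threefolds is precisely the open point that the statement isolates in its ``unless'' clause.
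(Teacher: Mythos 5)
The paper offers no argument for this corollary beyond the phrase ``using \cite{Pet01}'': the entire content is delegated to the main theorem of that reference, combined with the preceding corollary, which supplies $\kappa(X)\geq 0$ for every non-uniruled $X$. Your reconstruction ultimately does the same --- every substantive case is referred to \cite{Pet01} or \cite{Nak02b} --- so what has to be judged is the scaffolding you wrap around those citations, and there I see a genuine structural problem.

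Your case division by $\nu(K_X)$ misplaces both the easy part and the hard part. The case $\nu(K_X)=0$ is in fact unconditional and needs neither a covering family nor a Kummer structure: by the preceding corollary $\kappa(X)\geq 0$, so $mK_X=\sO_X(D)$ with $D$ effective, and $K_X\equiv 0$ gives $D\cdot\omega^2=0$ for a K\"ahler form $\omega$, which forces $D=0$ and hence $mK_X=\sO_X$. Consequently the exceptional simple non-Kummer threefolds cannot all live in the $\nu=0$ case --- if they did, the corollary would hold with no exception at all. The genuine difficulty, and the place where the ``unless'' clause actually lives, is $\nu(K_X)\in\{1,2\}$ with $a(X)=0$: there the algebraic reduction is a point, \cite{Nak02b} says nothing, and your appeal to ``the fibration whose general fibre is a curve or surface with numerically trivial canonical class'' has no fibration to act on. Since $\kappa(X)\leq a(X)=0<\nu(K_X)$ in that configuration, abundance would genuinely fail for such an $X$, so the proof must show that any such $X$ is simple and non-Kummer --- equivalently, that a covering family of positive-dimensional subvarieties, or a Kummer structure, forces $K_X\equiv 0$ once $a(X)=0$. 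That implication (via adjunction on the members of the covering family and their classification on minimal K\"ahler threefolds) is the real content of \cite{Pet01}, and it is precisely the step your sketch skips by asserting that all positive-numerical-dimension cases ``come out semi-ample''. Your $\nu=3$ reduction (nef and big $\Rightarrow$ Moishezon $\Rightarrow$ projective, contradicting non-projectivity) is correct, and you do identify the right references and the right exceptional class; but the induction on $\nu$ does not correctly route the hypotheses of the cited theorems to the cases that need them.
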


In \cite{CHP14}, we show the abundance conjecture for K\"ahler threefolds in full generality; thus the additional assumption in Corollary \ref{cor:abund}
is superfluous.

We end the introduction with a discussion on uniruled non-algebraic K\"ahler threefolds. Here the global bimeromorphic structure is rather
simple; as usual, $a(X) $ denotes the algebraic dimension.  Since $X$ is uniruled and non-algebraic, the rational quotient (MRC-fibration) is an almost holomorphic 
map $f: X \dasharrow S$ over a  smooth non-algebraic surface $S$.  

\begin{theorem} \cite{CP00}
Let $X$ be a smooth compact K\"ahler threefold that is uniruled and not projective. Then $X$ falls in one of the following classes. 
\begin{enumerate} 
\item $a(X) = 0$ and there is an almost holomorphic $\mathbb P_1-$fibration $f: X \dasharrow S$ over a smooth surface $S$ with $a(S) = 0.$ 
\item $a(X) = 1$ and we are in one of the following subcases.
\begin{enumerate}
\item $X$ is bimeromorphic to $\mathbb P_1 \times F$ with $a(F) = 0.$
\item the algebraic reduction $X \to B$ is holomorphic onto a smooth curve $B $ and  the general fibre is of the form $\mathbb P(\sO \oplus L)$
with $L$ a topologically trivial line bundle over an elliptic curve which is not torsion. 
\end{enumerate}
\item $a(X) = 2 $, $X$ has a meromorphic elliptic fibration over an algebraic surface. 
\end{enumerate}
\end{theorem}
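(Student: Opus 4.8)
The plan is to run the whole analysis through the maximal rationally connected fibration, which we are given is an almost holomorphic $\mathbb{P}_1$-fibration $f\colon X \dashrightarrow S$ onto a smooth non-algebraic surface $S$. Since the general fibre is a rational curve, restriction of meromorphic functions to a general fibre embeds $\mathcal{M}(X)/f^*\mathcal{M}(S)$ into $\mathcal{M}(\mathbb{P}_1)$, whence the algebraic dimensions satisfy $a(S) \le a(X) \le a(S)+1$. Two constraints now pin down the invariants: $X$ is K\"ahler and non-projective, hence not Moishezon, so $a(X) \le 2$; and $S$ is non-algebraic, so $a(S) \in \{0,1\}$. Therefore only the four pairs $(a(S),a(X)) \in \{(0,0),(0,1),(1,1),(1,2)\}$ occur, and I would treat them one at a time, first replacing $f$ by an honest holomorphic model after bimeromorphic modifications of $X$ and $S$, which is harmless for a classification up to bimeromorphism.

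When $a(S)=0$ every function pulled back from $S$ is constant. If moreover $a(X)=0$ we are exactly in the first alternative: an almost holomorphic $\mathbb{P}_1$-fibration over a surface with $a(S)=0$. If instead $a(X)=1$, the extra unit of algebraic dimension cannot come from $S$, so the algebraic reduction $\alpha\colon X \dashrightarrow B$ onto a curve is non-constant on a general fibre of $f$; thus $\alpha$ carries the general ruling $\mathbb{P}_1$ onto $B$, forcing $B \cong \mathbb{P}_1$. The product map $(f,\alpha)\colon X \dashrightarrow S \times \mathbb{P}_1$ then identifies $X$ bimeromorphically with $S \times \mathbb{P}_1$, once one checks that a general fibre of $f$ meets a general fibre of $\alpha$ in a single point; with $F := S$ this is case (2)(a), with $a(F)=0$. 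The delicate point here is the degree-one claim, i.e. excluding that $\alpha$ restricted to a general ruling is a multiple cover, which I would settle by a monodromy and normalisation argument.

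It remains to treat $a(S)=1$, where $S$ is a non-algebraic elliptic surface whose algebraic reduction is an elliptic fibration $S \to B$ over a projective curve $B$; composing, $X \to B$ has general fibre the ruled surface $R_b := f^{-1}(E_b) \to E_b$ over a general elliptic fibre $E_b \subset S$. Writing $R_b = \mathbb{P}(\sE_b)$ via Atiyah's classification, the dichotomy $a(X)=1$ versus $a(X)=2$ becomes a statement about which fibrations onto a curve $R_b$ admits. If $R_b \cong \mathbb{P}(\sO \oplus L)$ with $L$ topologically trivial and non-torsion, then the only such fibration is the ruling over $E_b$, no new meromorphic functions appear, $X \to B$ is the algebraic reduction, and we land in case (2)(b). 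In the complementary torsion-or-trivial situation $R_b$ carries a second fibration onto a rational curve whose fibres are the disjoint elliptic sections; assembling these fibrewise pencils over $B$ produces a meromorphic elliptic fibration $X \dashrightarrow Z$ onto an algebraic surface $Z$ fibred over $B$, with $a(X)=2$, which is case (3). Here the general fibre of the algebraic reduction is transparently elliptic rather than of higher genus, since it is one of the curves $E_b$ inherited from $S$.

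I expect the main obstacle to be precisely the global step in the last paragraph: upgrading the pointwise, fibre-by-fibre identification of the surfaces $R_b$ to a genuine, holomorphically assembled structure on $X$, so that the extra algebraic dimension is realised by one coherent fibration rather than merely fibrewise. Equivalently, one must show the family of rulings and of elliptic sections propagates consistently across the whole base $B$, and that the two ruled-surface types above are the only ones compatible with $X$ being K\"ahler and non-algebraic. The K\"ahler hypothesis is used repeatedly, both in the Moishezon-versus-projective reductions and in ensuring that the bimeromorphic models and the classification of the surface $S$ behave exactly as in the algebraic case.
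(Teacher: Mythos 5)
First, a point of order: the paper you are reading does not prove this statement. It is quoted in the introduction, with a citation to \cite{CP00}, purely as background on the uniruled case, and no argument for it appears anywhere in the text. So there is no internal proof to compare your attempt with; what follows assesses your sketch on its own terms. Your skeleton is certainly the right starting point and is presumably close to the original: the MRC fibration $f\colon X\dashrightarrow S$ is an almost holomorphic $\mathbb P_1$-fibration over a smooth non-algebraic surface, the inequalities $a(S)\le a(X)\le a(S)+1$ combined with $a(X)\le 2$ (K\"ahler and non-projective, hence non-Moishezon) and $a(S)\le 1$ reduce the classification to the four pairs $(a(S),a(X))\in\{(0,0),(0,1),(1,1),(1,2)\}$, and the pair $(0,0)$ is exactly case (1).

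The first genuine gap is in the case $(a(S),a(X))=(0,1)$, where the step you defer is not merely delicate but false as stated: the map $(f,\alpha)\colon X\dashrightarrow S\times\mathbb P_1$ need not have degree one. Take $S$ a two-dimensional torus with $a(S)=0$ containing no curves, let $L$ be a torsion line bundle on $S$ of order exactly $m\ge 2$, and set $X=\mathbb P(\sO\oplus L)$. This is a smooth uniruled non-projective compact K\"ahler threefold; since the only effective line bundle on $S$ is $\sO_S$, one checks that $a(X)=1$ and that the algebraic reduction is generated by a single meromorphic function whose restriction to a general ruling is $t\mapsto t^m$, so $(f,\alpha)$ has degree $m$. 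Moreover the general fibre of the algebraic reduction of this $X$ is the connected degree-$m$ \'etale cover $S'\to S$ trivialising $L$, while the MRC base is $S$ itself; any identification of $X$ with a product $\mathbb P_1\times F$ must therefore reconcile $F\sim S$ (bimeromorphic invariance of the MRC base) with $F\sim S'$ (bimeromorphic invariance of the general algebraic-reduction fibre), and these two tori are in general not isomorphic. Your argument, which would produce $F=S$ directly from a degree-one product map, cannot see this tension at all; handling it --- via the cyclic covering and normalised fibre product you gesture at, or via a finer statement of the alternative in (2)(a) --- is precisely where the content of this case lies.

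The second gap is in the cases with $a(S)=1$. The fibrewise dichotomy you propose does not discriminate anything: every ruled surface $R_b=\mathbb P(\sE_b)$ over the elliptic curve $E_b$ is a projective surface with $a(R_b)=2$ and carries many pencils (for instance $|C_0+kF_b|$ for $k\gg 0$), so the question is never ``which fibrations $R_b$ admits'' but which relative divisor classes and pencils extend coherently over all of $B$, i.e.\ the global geometry of $X\to B$ together with the K\"ahler hypothesis. This is exactly the globalisation problem you name as the main obstacle, which means the sketch stops where the proof must begin: the exclusion of $\deg\sE_b\neq 0$ and of indecomposable $\sE_b$, the uniform splitting $\sE_b\cong\sO\oplus L_b$ with $L_b$ topologically trivial, the torsion/non-torsion dichotomy separating (2)(b) from (3), and the actual construction of the meromorphic elliptic fibration in case (3) are all asserted rather than proved.
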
 

By \cite{HP13b}, the MMP for uniruled threefolds exhibits a birational $X'$ which is  a Mori fibre space 
$\varphi: X' \to S$  over a non-algebraic surface $S$.

{\bf Acknowledgements.} 
We thank the Forschergruppe 790 ``Classification of algebraic surfaces and compact complex manifolds'' of the Deutsche
Forschungsgemeinschaft for
financial support. 
A. H\"oring was partially also supported by the A.N.R. project CLASS\footnote{ANR-10-JCJC-0111}.
This work was done when A. H\"oring was member of the Institut de Math\'ematiques de Jussieu (UPMC).
Last but not least we thank the referees for very helpful comments.

\section{Basic Notions} \label{sectionnotation}

We will use frequently standard terminology  
of the minimal model program (MMP) as explained in \cite{KM98} or \cite{Deb01}.
In particular we say that a normal complex space $X$ is $\Q$-factorial if for every Weil divisor $D$
there exists an integer $m \in \N$ such that $\sO_X(mD)$ is a locally free sheaf, i.e. $mD$ is a Cartier divisor.
Since however the canonical sheaf $K_X = \omega_X$ need not be a $\mathbb Q$-Weil divisor, we include in the 
definition of $\mathbb Q-$factoriality also the condition that there is a number $m \in \N$ such that the coherent sheaf
$$
(K_X^{\otimes m})^{**} = (\omega_X^{\otimes m})^{**}
$$
is locally free. We shall write 
$$  mK_X = (K_X^{\otimes m})^{**}Ê$$
for short.

Somewhat abusively we will denote the cycle space or Barlet space of a complex space $X$ by $\Chow{X}$ \cite{Bar75}. 
A curve $C$ is a one-dimensional projective complex space that is irreducible and reduced.
If $X$ is a complex space, we will denote an effective $1$-cycle by $\sum \alpha_l C_l$ where the $C_l \subset X$ are curves
and the coefficients $\alpha_l$ are positive integers, unless explicitly mentioned otherwise. 

\begin{definition}
Let $X$ be an irreducible and reduced complex space. A differential form $\omega$ of type $(p,q)$
is a differential form (of class $\sC^{\infty}$) of type $(p,q)$ on the smooth locus $X_{\nons}$ such that for every point
$x \in X_{\sing}$ there exists an open neighbourhood $x \in U \subset X$ and a closed embedding $i_U: U \subset V$ into an open set $V \subset \mathbb C^N$ such that there exists a 
differential form $\omega_V$  of type $(p,q)$ and class $\sC^{\infty}$ on $V$ 
such that $\omega_V|_{U \cap X_{\nons}}  = \omega|_{U \cap X_{\nons}}$.
\end{definition}

We denote the sheaf of $(p,q)$-forms by $\sA^{p,q}_X$, the cosheaf of forms with compact support is denoted by $\sA_c^{p,q}$. 
We define analogously forms of degree $r$ and denote the corresponding sheaf by $\sA^r_X$. We denote by $\sA_{\R, X}^r$ the sheaf
of real forms of degree $r$. For details, more information and references we refer to \cite[p.181]{AG05} and \cite{Dem85}.

Let $X$ be an irreducible and  reduced complex space of dimension $n$. Then the notion of a current of degree $s$ (resp. bidegree $(p,p)$ or bidimension $(n-p,n-p)$) is well-defined, see
\cite{Dem85}, as well as the notion of closedness and positivity. 
We will denote by $\sD^{p,q}(X)$ the space of currents of bidegree $(p,q)$.

Let now $X$ be a normal complex space and $\omega$ a form of type $(p,q)$ on $X$. Let
$\holom{\pi}{\hat X}{X}$ be a morphism from a complex manifold $\hat X$. 
The pull-back $\pi^* \omega$ is then defined as follows:
every point $x \in X$ 
has an open neighborhood $U$ admitting a closed embedding $i_U: U \subset V$ into an open set $V \subset \mathbb C^N$
and there exists a form $\omega_V$ on $V$ such that $\omega|_{U \cap X_{\nons}} = \omega_V|_{U \cap X_{\nons}}$.  
We can thus define the pull-back $\pi^* \omega$ on $\fibre{\pi}{U}$ by setting
$$
(\pi^* \omega)|_{\fibre{\pi}{U}} := (i_U \circ \pi|_{\fibre{\pi}{U}})^* \omega_V. 
$$
By \cite[Lem.1.3.]{Dem85} these local definitions glue together, so we obtain a pull-back map:
$$
\pi^*: \sA^{p,q}(X) \rightarrow \sA^{p,q}(\hat X).
$$
In particular if $\pi$ is proper, we can define the direct image map
$$
\pi_* : \sD^{p,q} (\hat X) \rightarrow \sD^{p,q}(X), \ T \mapsto \pi_*(T) 
$$
by setting
$$ 
\pi_*(T)(\omega) = T(\pi^*(\omega))  \qquad \forall \ \omega \in \sA_c^{n-p,n-q}(X).
$$ 

It is well known that the theory of Lelong numbers and Lelong level sets, as presented in \cite{Siu74} and \cite{Dem87}  works perfectly on any reduced complex space.
Given a reduced complex space $X$ and a positive $(1,1)-$current $T$ on $X$, choose local open sets $U$ with embeddings $i: U \to \bC^N.$ Then define the Lelong level set
$E_c(T) $ locally to be $E_c(i_*(T))$. Since Lelong numbers are invariant under biholomorphic maps, see e.g. \cite[2.2]{Dem87}, the local definitions are independent of all choices
and define globally analytic sets $E_c(T)$. 

The notion of a singular K\"ahler space was first introduced by Grauert \cite{Gra62}. 

\begin{definition} \label{definitionkaehler}
An irreducible and reduced complex space $X$ is K\"ahler if there exists a K\"ahler form $\omega$, i.e. a positive closed real $(1,1)$-form $\omega \in \mathcal A_\R^{1,1}(X)$ such that the following holds: for every point
$x \in X_{\sing}$ there exists an open neighbourhood $x \in U \subset X$ and a closed embedding $i_U: U \subset V$ into an open set $V \subset \mathbb C^N$, and  a strictly plurisubharmonic $C^{\infty}$-function $f : V \rightarrow \C$ with 
$ \omega|_{U \cap X_{\nons}} = (i \partial \overline \partial f)|_{U \cap X_{\nons}}$. 
\end{definition} 

\begin{remark} \label{remarkkaehler}\cite[1.3.1]{Var89}
If $X$ is a compact K\"ahler space and $\pi: \hat X \to X$ a projective morphism, then $\hat X$ is again K\"ahler. In particular, $X$ has a K\"ahler desingularisation.
A subvariety of a K\"ahler space is also K\"ahler.
\end{remark}

\section{The dual K\"ahler cone}

\subsection{Bott-Chern cohomology}

In this section we review the cohomology groups that replace the N\'eron-Severi space $\NS(X) \otimes \R$
from the projective setting. 

\begin{definition}  \cite[Defn. 4.6.2]{BG13}
Let $X$ be an irreducible reduced complex space. Let $\sH_X$ be the sheaf of real parts of holomorphic
functions multiplied\footnote{We ``twist'' the definition from \cite{BG13} in order to get a group that injects in
$H^2(X, \R)$ rather than $H^2(X, i \R)$} with $i$. A $(1,1)$-form (resp. $(1,1)$-current) with local potentials on $X$ is a global section
of the quotient sheaf $\sA^0_X/\sH_X$ (resp. $\mathcal D_X/\sH_X$). We define the Bott-Chern cohomology
$$
H^{1,1}_{\rm BC}(X) := H^1(X, \sH_X).
$$
\end{definition}

\begin{remark} \label{bott} {\rm 
Using the exact sequence 
$$
0 \rightarrow \sH_X \rightarrow \sA^0_X \rightarrow \sA^0_X/\sH_X \rightarrow 0,
$$
and the fact that $\sA^0_X$ is acyclic, we obtain a surjective map
$$
H^0(X,  \sA^0_X/\sH_X) \rightarrow H^1(X, \sH_X).
$$
Thus we can see an element of the Bott-Chern cohomology group as a closed $(1,1)$-form with local potentials
modulo all the forms that are globally of the form $dd^c u$.  Using the exact sequence
$$
0 \rightarrow \sH_X \rightarrow \mathcal D_X \rightarrow \mathcal D_X/\sH_X \rightarrow 0,
$$
we see that one obtains the same Bott-Chern group, if we consider $(1,1)$-currents $T$ with local potentials, which is to say that 
locally $T = dd^cu$ with $u$ a distribution. }
\end{remark}

{\bf Notation.} If $\alpha$ is a smooth form with local potentials or $T$ a current with local potentials, then its
class in $H^{1,1}_{\rm BC}(X)$ will be denoted by $[\alpha]$, resp. $[T].$ 

A fundamental property (shown in \cite[Lemma 4.6.1]{BG13}, \cite[Prop.2.1]{Fuj96}) is that
for any proper bimeromorphic morphism \holom{\pi}{X}{Y} between normal complex spaces we have
\begin{equation} \label{piRO}
\pi_* (\sH_X) = \sH_Y.
\end{equation}
Suppose now that $X$ is a compact normal space with at most rational singularities in the Fujiki class $\mathcal C$. 
Consider the exact sequence
\begin{equation} \label{standardRO}
0 \rightarrow \R \rightarrow \sO_X \rightarrow \sH_X \rightarrow 0,
\end{equation}
then the long exact sequence in cohomology yields
$$
0 \rightarrow H^1(X, \R) \rightarrow H^1(X, \sO_X) \rightarrow H^1(X, \sH_X)
\rightarrow H^2(X, \R) \rightarrow \ldots
$$
Since $H^1(X, \R) \rightarrow H^1(X, \sO_X)$ is an 
isomorphism\footnote{Since $X$ has rational singularities we have $H^1(\hat X, \R) \simeq H^1(X, \R)$ and 
$H^1(\hat X, \sO_{\hat X}) \simeq H^1(X, \sO_X)$ where $\hat X \rightarrow X$ is a resolution by a 
compact K\"ahler manifold. Now we apply the classical result from the K\"ahler case.}, 
we obtain an injection
\begin{equation} \label{BCreal}
H^1(X, \sH_X)
\hookrightarrow H^2(X, \R).
\end{equation}
Let now \holom{\varphi}{X}{Y} be a proper bimeromorphic morphism between
compact normal spaces in class $\mathcal C$ with at most rational singularities. 
Using \eqref{piRO}, the Leray spectral sequence yields an exact sequence
$$
0 \rightarrow H^1(Y, \sH_Y) \rightarrow H^1(X, \sH_X) \rightarrow H^0(Y, R^1 \varphi_*(\sH_X)).
$$
In particular we have an injection between the Bott-Chern spaces
\begin{equation} \label{BCinjective}
H^{1,1}_{\rm BC}(Y) \to H^{1,1}_{\rm BC}(X).
\end{equation}
Since $Y$ has only rational singularities and $\varphi$ is bimeromorphic, the Grothendieck spectral sequence immediately 
yields
$$ R^q\varphi_*(\sO_X) = 0$$
for $q \geq 1$. Consequently 
$R^1 \varphi_* (\R)=0$, so the Leray spectral sequence yields an exact sequence
$$
0 \rightarrow H^2(Y, \R) \rightarrow H^2(X, \R) \rightarrow H^0(Y, R^2 \varphi_*(\R)).
$$
Moreover the push-forward of the exact sequence
\eqref{standardRO} yields an isomorphism
$$
R^1 \varphi_*(\sH_X) \simeq R^2 \varphi_*(\R).
$$
Combining this isomorphism and the sequences above with the injection \eqref{BCreal} we obtain a commutative diagram
\begin{equation}
\label{commutative}
\xymatrix{
0 \ar[r] & H^1(Y, \sH_Y) \ar[r] \ar @{^{(}->}[d] & H^1(X, \sH_X) \ar[r]  \ar @{^{(}->}[d] & H^0(Y, R^1 \varphi_*(\sH_X)) \ar[d]^{\simeq}
\\
0 \ar[r] & H^2(Y, \R) \ar[r] & H^2(X, \R) \ar[r] & H^0(Y, R^2 \varphi_*(\R))
}
\end{equation}
We will now use the claim $(*)$ from the proof of \cite[Thm.12.1.3]{KM92}: given an element $S \in H^2(X, \R)$ such that
its image in $H^0(Y, R^2 \varphi_*(\R))$ is non-zero, there exists an element $C \in H_2(X/Y, \R)$ such that
$S \cdot C \neq 0$. Since $H_2(X/Y, \R)$ is generated by classes of algebraic cycles \cite[Thm.12.1.3]{KM92} we obtain
the following.

\begin{lemma} \label{pullback} 
Let  \holom{\varphi}{X}{Y} be a proper bimeromorphic morphism between
compact normal spaces in class $\mathcal C$ with at most rational singularities. 
Then we have an injection
$$
\varphi^*: H^{1,1}_{\rm BC}(Y) = H^1(Y, \sH_Y) \hookrightarrow H^1(X, \sH_X) = H^{1,1}_{\rm BC}(X) 
$$
and
$$
\mbox{\rm Im} \  \varphi^* = \{
\alpha \in H^1(X, \sH_X) \ | \ \alpha \cdot C = 0 \quad \forall \ C \subset X \ \mbox{curve s.t.} \ \varphi(C)=pt
\}. $$
Furthermore, let $\alpha \in H^1(X, \sH_X) \subset H^2(X, \R)$ be a class such that $\alpha = \varphi^* \beta$ with
$\beta \in H^2(Y, \R)$. Then there exists a smooth real closed $(1,1)$-form $\omega_Y$ on $Y$
such that $\alpha = \varphi^* [\omega_Y ]$.
\end{lemma}

\begin{proof} A simple diagram chase in \eqref{commutative}. 
\end{proof}

By \cite[Prop.4.6.3]{BG13} we can push-forward currents with local potentials:

\begin{lemma} \label{lemmapush} 
Let  \holom{\varphi}{X}{Y} be a proper bimeromorphic morphism between
compact normal spaces in class $\mathcal C$ with at most rational singularities. 
Let $\alpha \in H^1(Y, \sH_Y)$ be a class, and let $\omega$ be a positive closed (1,1)-current with local potentials
on $X$ such that
$$
[\omega] = \pi^* \alpha. 
$$
Denote by $E \subset Y$ the image of the $\varphi$-exceptional locus. Then $\varphi_*(\omega)|_{Y \setminus E}$
has a unique extension  $\widehat{\varphi_*(\omega)}$ as a closed positive current with local potentials to $Y$ and
$[\widehat{\varphi_*(\omega)}]=\alpha$.
\end{lemma}

\begin{remark} \label{remarksmooth}
Using the existence of local potentials we can apply regularization techniques known from the smooth case \cite{Dem92}.
For example consider $Y$ a normal complex space with rational isolated singularities, and let \holom{\pi}{X}{Y}
be a desingularisation. Suppose that there exists a class $\alpha \in H^1(Y, \sH_Y)$ and a K\"ahler form $\omega$ on $X$
such that $[\widehat{\pi_*(\omega)}]=\alpha$. Then $\alpha$ can be represented by a smooth K\"ahler form: if $y \in Y_{sing}$ is a point, choose
a local embedding of a small neighborhood in $\C^N$, and let $\varphi$ be the local potential of $\widehat{\pi_*(\omega)}$.
By construction $\varphi$ is smooth and strictly psh in the complement of $y$, so we can take a regularized maximum with
some smooth strictly psh function $\varphi_s$ to obtain a local potential which is smooth, strictly psh and glues to a global form.
\end{remark}

To make the connection to the projective case, we 
define

\begin{definition} \label{defN1} 
Let $X$ be a normal compact complex space. Then we set
$$ 
N^1(X) := H^{1,1}_{\rm BC}(X).  
$$ 
A class $\alpha \in N^1(X)$ is pseudoeffective if it can be represented by a current $T$ which is locally
of the form $\partial \pp \varphi$ with $\varphi$ a psh function.
\end{definition}

\begin{remark} 
If $X$ has only rational singularities, we have seen that 
$$ N^1(X) \subset H^2(X,\bR).$$
Using this inclusion one can define, as in the smooth case,
the intersection product on $N^1(X)$ via the the cup-product for $H^2(X, \R)$. 
\end{remark} 

Note that for $X$ projective, the space $N^1(X)$
is usually defined differently, namely as the $\R$-vector space generated by the classes of irreducible divisors. 
Since we are dealing with general compact complex spaces, this space is often too small to be useful.

\subsection{The dual K\"ahler cone} 

Let $X$ be a normal compact complex space, and let $C \subset X$ be an irreducible curve.
We associate the current of integration $T_C$ by setting
$$
T_C(\omega) = \int_C \omega
$$
for $\omega$ any real closed $(1,1)$-form on $X$. If $X$ is smooth it is well-known that $T_C$
is closed, one sees easily that this also holds in the singular case:
let $\holom{\pi}{\hat X}{X}$ be a desingularisation. If $C$ is not contained in the singular locus of $X$,
let $\hat C$ be its strict transform. Then $T_{\hat C}$ is a closed current, so
$\pi_* (T_{\hat C}) =T_C$ is closed. If $C \subset {\rm Sing}(X)$, then take $\hat C$ to be any curve
such that $\pi(\hat C) = C$ (which exists since the resolution $\pi$ is a projective morphism).
Then we have $\pi_*(T_{\hat C}) = d T_C$, where $d$ is the degree of $\hat C \rightarrow C$. 
Hence $T_C$ is closed.

\begin{definition} \label{defNA} 
Let $X$ be a normal compact complex space in class $\mathcal C.$ We define $N_1(X) $ to be the vector space of real closed currents of bidimension $(1,1)$ modulo the
 following equivalence relation: 
 $ T_1 \equiv  T_2 $ if and only if
 $$ T_1(\eta) = T_2(\eta)$$
 for all real closed $(1,1)$-forms $\eta$ with local potentials.  \\
We define $\NAX \subset  N_1(X)$ to be the closed cone generated by the classes of positive closed currents. 
We define  the Mori cone 
$$
\NEX \subset \NAX 
$$
as the closure of the cone generated by the currents of integration $T_C$ where $C \subset X$ is an irreducible curve.
\end{definition}

\begin{remark*}
The inclusion $\overline {NE}(X) \subset \overline {NA}(X)$   
can be strict even if $X$ is a projective manifold: in this case $\overline {NE}(X)$
generates a vector space of dimension equal to the Picard number $\rho(X)$, while $\NAX$ generates
$H^{n-1,n-1}(X)$.
\end{remark*}

Let $\omega$ be a real closed $(1,1)$-form on $X$ with local potentials. Then we can define 
$$
\lambda_\omega \in N_1(X)^*, \ [T] \mapsto T(\omega).
$$
If $T(\omega)=0$ for all  closed currents $T$ of bidimension $(1,1)$, we have $\lambda_\omega=0$. Thus we have a well-defined canonical map
$$
\Phi: N^1(X) \to N_1(X)^*, \ [\omega] \mapsto \lambda_\omega.
$$

\begin{proposition} \label{dual}
Let $X$ be a normal compact complex space with rational singularities of dimension~$n$ in class $\mathcal C$. 
Then the canonical map $\Phi$ is an isomorphism. In particular $N_1(X) $ is finite-dimensional.
Moreover, given a linear map $\lambda: N_1(X) \to \mathbb R$, 
there exists a real closed $(1,1)$-form $\omega$ such that 
$$
\lambda([T]) = T(\omega).
$$

If $\mu: X' \to X$ is a bimeromorphic map from a normal compact complex space $X'$ in class $\mathcal C$, then the natural linear map
$$ 
\mu_*: N_1(X') \to N_1(X)
$$ 
is surjective.
\end{proposition}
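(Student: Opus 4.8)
The plan is to reduce everything to a smooth compact Kähler model via a desingularisation and to run classical Hodge theory and Poincaré duality there, transporting the conclusions back and forth along $\pi^*$ and $\pi_*$. First I would fix a Kähler desingularisation $\holom{\pi}{\hat X}{X}$, which exists because $X$ lies in class $\mathcal{C}$ (as in the proof of Lemma \ref{lemmaN1ddbar}). On the smooth Kähler manifold $\hat X$, the isomorphism of Lemma \ref{lemmaN1ddbar} together with classical Hodge theory identifies $N^1(\hat X)$ with the real $(1,1)$-classes in $H^2(\hat X,\R)$, so $N^1(\hat X)$ is finite-dimensional; moreover Poincaré duality, combined with the fact (used already in the proof of Lemma \ref{lemmaN1ddbar}) that every class in $H^{n-1,n-1}(\hat X)$ is represented by a closed current of bidimension $(1,1)$, shows that the pairing
$$
N^1(\hat X) \times N_1(\hat X) \to \R, \quad ([\eta],[S]) \mapsto S(\eta)
$$
is perfect. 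In particular $N_1(\hat X)$ is finite-dimensional and the induced map $N_1(\hat X) \to N^1(\hat X)^*$ is an isomorphism. Since $\pi^*: N^1(X) \to N^1(\hat X)$ is injective by Lemma \ref{lemmainjectiveddbar} (via Lemma \ref{lemmaN1ddbar}), I also conclude that $N^1(X)$ is finite-dimensional.

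Next I would establish the surjectivity of $\pi_*$, which is the technical heart of the statement and simultaneously yields the last assertion. Given a closed real current $T$ of bidimension $(1,1)$ on $X$, the prescription $[\pi^*\omega] \mapsto T(\omega)$ defines a linear functional on the subspace $\pi^* N^1(X) \subset N^1(\hat X)$: it is well-defined because $\pi^*$ is injective and $T$ is closed, so $T(\omega)$ depends only on the class $[\omega]\in N^1(X)$. Extending this functional arbitrarily to all of $N^1(\hat X)$ (possible as this space is finite-dimensional) and using the perfectness of the pairing on $\hat X$, I obtain a class $[\hat T]\in N_1(\hat X)$ with $\hat T(\pi^*\omega) = T(\omega)$ for every closed real $(1,1)$-form $\omega$ on $X$. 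The adjunction $(\pi_*\hat T)(\omega) = \hat T(\pi^*\omega)$, which is exactly the definition of the direct image, then gives $\pi_*\hat T \sim T$ in $N_1(X)$, so $\pi_*: N_1(\hat X)\to N_1(X)$ is surjective; in particular $N_1(X)$ is finite-dimensional. For a general bimeromorphic $\mu: X'\to X$ I would choose a common Kähler desingularisation $\hat X$ dominating $X'$, so that the resolution $\hat X \to X$ factors through $\mu$; the identity $(\mu\circ\pi')_* = \mu_*\circ\pi'_*$ together with the surjectivity just proved forces $\mu_*$ to be surjective as well.

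Finally I would assemble the isomorphism statement. By the very definition of $N^1(X)$ the map $\Phi$ is injective, while by the definition of $N_1(X)$ the pairing is non-degenerate on the $N_1(X)$-side, so $N_1(X)$ injects into $N^1(X)^*$. Both spaces being finite-dimensional, these two injections give $\dim N^1(X) \le \dim N_1(X) \le \dim N^1(X)$, hence equality, and $\Phi$ is an isomorphism. The ``moreover'' clause is then nothing but the surjectivity of $\Phi$: given $\lambda: N_1(X)\to\R$ one takes $\omega$ with $[\omega]=\Phi^{-1}(\lambda)$.

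I expect the passage from the singular base $X$ to its smooth model to be the main obstacle: one cannot perform Hodge theory on $X$ directly, and the entire argument rests on the injectivity of $\pi^*$ (Lemma \ref{lemmainjectiveddbar}) together with the perfect Poincaré pairing on $\hat X$. It is precisely the interplay of these two inputs that simultaneously delivers the finite-dimensionality of $N_1(X)$, the duality expressed by $\Phi$, and the surjectivity of the push-forward; verifying that the functional $[\pi^*\omega]\mapsto T(\omega)$ is genuinely well-defined (rather than merely the well-definedness being formal) is the one point where closedness of $T$ and injectivity of $\pi^*$ must both be used carefully.
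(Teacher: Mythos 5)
Your argument is correct and follows essentially the same route as the paper's proof: both reduce to a K\"ahler desingularisation via Lemmas \ref{lemmainjectiveddbar} and \ref{lemmaN1ddbar}, exploit the tautological injections $N^1(X) \hookrightarrow N_1(X)^*$ and $N_1(X) \hookrightarrow N^1(X)^*$ coming directly from the two definitions, and obtain surjectivity of $\pi_*$ (hence of $\mu_*$ by functoriality) as the transpose of the injective $\pi^*$. The only cosmetic difference is that you make the perfect pairing on $\hat X$ explicit via Poincar\'e duality and produce preimages under $\pi_*$ by extending linear functionals, where the paper invokes the finite-dimensional duality abstractly.
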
 

\begin{proof} Obviously, the vector space $N^1(X) $ is finite-dimensional. 

Let $\omega$ be a real closed $(1,1)$-form with local potentials such that $\Phi([\omega]) = 0$. 
Then $T(\omega) = 0$ for all closed currents $T$ of bidegree $(n-1, n-1)$.  
By definition of $N^1(X)$ this gives $[\omega] = 0$. Thus $\Phi$ is injective. 
 
Let us now show that $\Phi$ is surjective. 
Since $N^1(X)$ is finite-dimensional, it suffices to show that $\dim N_1(X) \leq  \dim N^1(X).$ However this is clear, since we can prove as above that the natural map 
$$ 
\psi: N_1(X) \to N^1(X)^*, \ [T] \mapsto \psi([T])([\omega]) = T(\omega)
$$  
is well-defined and injective.

For the proof of the second statement, let $\holom{\pi}{\hat X}{X}$ be a desingularisation of $X$ that factors through $\mu$.
By Lemma \ref{pullback},  the pull-back 
$$
N^1(X) \to N^1(\hat X)
$$
is injective, so by duality the push-forward 
$\pi_*: N_1(\hat X) \to N_1(X)$ 
is surjective. By functoriality of the push-forward this shows that 
$$ 
\mu_*: N_1(X') \to N_1(X)
$$ 
is also surjective.
\end{proof} 

For the convenience of the reader we recall the definition of nefness, cf.\cite[Def.3]{Pau98}.

\begin{definition} \label{defnef}  
Let $X$ be an irreducible reduced compact complex space, and let $u \in H^{1,1}_{\rm BC}(X)$ be 
a class represented by a form $\alpha$ with local potentials. Then $u$ is nef if for some positive $(1,1)-$form $\omega $ on $X$ and
for every $\epsilon > 0$ there exists $f_{\epsilon} \in \sA^0(X)$ such that
$$ \alpha + i \partial \overline{\partial} f_{\epsilon} \geq - \epsilon \omega.$$
\end{definition}

\begin{definition} \label{definitionnefcone}
Let $X$ be a normal compact complex space in class $\mathcal C$. 
We denote by ${\rm Nef}(X) \subset N^1(X)$ the cone generated by cohomology classes which are
nef. 
\end{definition}

\begin{remark} \label{remarkkaehlerconenef}
If $X$ is a normal compact K\"ahler space we can also consider the open cone $\sK$ generated by
the classes of K\"ahler forms. In this case we know\footnote{The statement in \cite[Prop.6.1.iii)]{Dem92} is for compact manifolds,
but the proof works in the singular setting.} that
$$
{\rm Nef}(X)  = \overline {\sK}.
$$
\end{remark}

\begin{lemma} \label{lemmanef}
Let $X$ be a normal compact threefold in class $\mathcal C$, and let
$\mu: X' \to X$ be a bimeromorphic holomorphic map from a normal compact threefold.
Let $\eta$ be a real closed $(1,1)$-form with local potentials on $X$.
Then the class $[ \eta ] \in N^1(X)$ is nef if $\mu^* [ \eta ] \in N^1(X')$
is nef.
\end{lemma}

\begin{proof}
Since $\mu^* [ \eta ]$ is nef, the class $[ \eta ] = \mu_* \mu^* [ \eta ]$ is pseudoeffective.
By \cite[Prop.3.3(iv)]{DP04} the class $[ \eta ]$ is nef if the restriction to every Lelong set is nef.
Note now that any irreducible component $Z$ of a Lelong set is the image of a Lelong set of $\mu^* [ \eta ]$
or contained in the image of the $\mu$-exceptional locus. In the first case the restriction is nef,
since $\mu^* [ \eta ]$ is nef. In the second case, since $X$ is a threefold, we see that $Z$ is a point or a curve.
The restriction to a point is obviously nef, so suppose that $Z$ is a curve and let $Z' \subset X'$ be an irreducible
curve such that the induced map $Z' \rightarrow Z$ is finite of degree $d$. Then we have
$$
\int_Z \eta|_Z = \frac{1}{d} \int_{Z'} \pi^* \eta|_{Z'} \geq 0
$$ 
since $\pi^* [ \eta ]$ is nef. Thus $[ \eta ]|_Z$ has non-negative degree on the curve $Z$, so it is nef.
\end{proof}

\begin{proposition} \label{ext} 
Let $X$ be a normal compact threefold with rational singularities in class $\mathcal C$. 
Let $\mu: X' \to X$ be a bimeromorphic holomorphic map from a normal compact threefold.
Then we have
$$
\mu_*(\overline {NA}(X')) = \overline {NA}(X).
$$ 
\end{proposition}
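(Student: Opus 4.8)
The plan is to prove the two inclusions separately. The inclusion $\mu_*(\overline{NA}(X')) \subseteq \overline{NA}(X)$ is the easy one: if $T'$ is a positive closed current of bidimension $(1,1)$ on $X'$, then its direct image $\mu_* T'$ is again a positive closed current of bidimension $(1,1)$ on $X$ (pull-back of a strongly positive test form stays semipositive, and $d$ commutes with $\mu_*$), so $\mu_*[T'] = [\mu_* T'] \in \overline{NA}(X)$. Since $\mu_*\colon N_1(X') \to N_1(X)$ is linear and continuous by Proposition \ref{dual} and $\overline{NA}(X)$ is a closed cone, the image of the whole cone $\overline{NA}(X')$ lands in $\overline{NA}(X)$. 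For the reverse inclusion I use that $\mu$ is bimeromorphic and $X$ is normal, so it is an isomorphism over the complement of a closed analytic subset $Z \subset X$ of codimension at least $2$; write $E := \mu^{-1}(Z)$ for the exceptional locus.

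The core of the argument is to show that every positive closed current $T$ of bidimension $(1,1)$ on $X$ admits a positive closed preimage, i.e. a positive closed current $T'$ of bidimension $(1,1)$ on $X'$ with $\mu_* T' = T$. Granting this, $[T] = \mu_*[T'] \in \mu_*(\overline{NA}(X'))$, and since $\overline{NA}(X)$ is the closed cone generated by such classes $[T]$ and $\mu_*$ is continuous, we obtain $\overline{NA}(X) \subseteq \mu_*(\overline{NA}(X'))$. To build $T'$, I would first transport $T$ through the isomorphism $\mu\colon X' \setminus E \xrightarrow{\ \sim\ } X \setminus Z$, obtaining a positive closed current $T'_0$ of bidimension $(1,1)$ on $X' \setminus E$, and then extend $T'_0$ across the analytic set $E$ by the Skoda--El Mir theorem to a positive closed current $T'$ on all of $X'$. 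By construction $\mu_* T'$ and $T$ agree over $X \setminus Z$, so their difference is a closed current supported on $Z$, and it remains to arrange that this difference vanishes.

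I expect the extension step and the matching of the direct images over $Z$ to be the main obstacle. To control them I would use Siu's decomposition
$$
T = \sum_j \nu_j\,[C_j] + R,
$$
where the $C_j$ are (one-dimensional) curves and $R$ is a residual positive closed current charging no curve. The curve currents $[C_j]$ lift explicitly: for each $j$ one picks an irreducible component of $\mu^{-1}(C_j)$ dominating $C_j$ and rescales by the mapping degree, which in particular accounts for the part of $T$ that charges $Z$. The residual current $R$ charges no curve, so its transport should have locally finite mass near $E$ and its trivial extension should be a positive closed current whose direct image recovers exactly the part of $T$ not supported on $Z$; verifying this finite-mass property and the resulting equality $\mu_* T' = T$ is the delicate, technical heart of the argument. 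Finally, to phrase the conclusion conceptually one may note, using the perfect duality $N^1(X) \cong N_1(X)^*$ of Proposition \ref{dual} and biduality of closed convex cones, that the inclusion $\overline{NA}(X) \subseteq \mu_*(\overline{NA}(X'))$ is equivalent to the implication that $\mu^*[\omega]$ nef on $X'$ forces $[\omega]$ nef on $X$, which is precisely what the existence of positive closed preimages yields.
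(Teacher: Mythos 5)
Your first inclusion and the final biduality framing are fine, but the heart of your argument --- the existence, for every positive closed current $T$ of bidimension $(1,1)$ on $X$, of a positive closed current $T'$ on $X'$ with $\mu_*T'=T$ --- is exactly the hard point, and you do not prove it. The obstruction you flag in passing is the real one: after transporting $T$ to $X'\setminus E$ there is no a priori bound on the mass of the transported current near $E$ (a hermitian metric on $X'$ cannot be dominated by $\mu^*$ of a metric on $X$ along the exceptional set), so Skoda--El Mir does not apply without a genuine argument, and Siu's decomposition only reduces the problem to the residual part $R$ without resolving the mass estimate. For bidimension $(1,1)$ currents under modifications of \emph{manifolds} such a strict-transform theorem does exist (Alessandrini--Bassanelli), but it is a substantial theorem in its own right, and here $X$ and $X'$ are merely normal compact complex spaces in class $\mathcal C$, which adds further difficulties. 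As written, the proof therefore has a gap at its central step.

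The paper avoids lifting currents altogether. It replaces $\mu$ by a desingularisation $\pi\colon\hat X\to X$ factoring through $\mu$ with $\hat X$ compact K\"ahler, and argues by duality: if $\pi_*(\overline{NA}(\hat X))$ were a proper subcone of $\overline{NA}(X)$, a separating linear form would be represented by a closed real $(1,1)$-form $\eta$ with $\pi^*\eta$ nonnegative on $\overline{NA}(\hat X)$; by Demailly--P\v aun the class $\{\pi^*\eta\}$ is then nef on $\hat X$, by P\v aun's descent theorem $\{\eta\}$ is nef on $X$, hence nonnegative on $\overline{NA}(X)$, contradicting the choice of the separating form. This is essentially the ``conceptual'' reformulation in your last sentence, except that the implication ``$\pi^*\{\eta\}$ nef $\Rightarrow\{\eta\}$ nef'' is supplied by P\v aun's theorem rather than by current lifting. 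To salvage your route you would have to quote a transform theorem for bidimension $(1,1)$ currents valid in this singular setting (and still handle the closedness of the image cone); otherwise the duality argument is the efficient path.
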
 

\begin{proof} 
Let $\holom{\pi}{\hat X}{X}$ be a desingularisation of $X$ that factors through $\mu$ and such that $\hat X$ is a compact
K\"ahler manifold.
By the functoriality of the push-forward it is sufficient to prove that
$$
\pi_*(\overline {NA}(\hat X)) = \overline {NA}(X).
$$ 
We clearly have $\pi_*(\overline {NA}(\hat X)) \subset \overline {NA}(X)$.
Arguing by contradiction we assume that $\pi_*(\overline {NA}(\hat X)) $ is a proper subcone of $\overline {NA}(X).$ 
The space $N_1(X)$ being finite-dimensional, there exists a linear map
$$ \lambda: N_1(X) \to \mathbb R$$
which is non-negative on  $\pi_*(\overline {NA}(\hat X)),$ and $\lambda(T_0) < 0$ for some $T_0 \in  \overline {NA}(X).$ 
By Proposition \ref{dual}, there exists a real closed $(1,1)$-form $\eta \in \sA^{1,1}(X)$ with local potentials such that $\lambda([T]) = T(\eta) $ for all $[T] \in N_1(X).$ 
We define
$$
\hat \lambda: N_1(\hat X) \to \mathbb R
$$ by 
$\hat \lambda([\hat T]) = \hat T(\pi^* \eta).$ Then we have 
$$ \hat \lambda([\hat T]) = \hat T(\pi^*(\eta)) = \pi_*(\hat T)(\eta) \geq 0$$
for all positive closed currents $\hat T$ on $\hat X$, so  
$\hat \lambda$ is non-negative on $\overline {NA}(\hat X)$.  
Since $\overline {NA}(\hat X)$ is dual to the K\"ahler cone  (a well-known consequence of \cite[Cor.0.3]{DP04}; see e.g. \cite[Prop.1.8]{OP04}),
we conclude that $[ \pi^*(\eta) ] $ is a nef class. 
Thus $[ \eta ]$ is nef by Lemma \ref{lemmanef}, hence
$[\eta]$ is non-negative on
$\overline {NA}(X)$, a contradiction.
\end{proof}

\begin{proposition} \label{dual1}
Let $X$ be a normal compact threefold with rational singularities in class $\mathcal C$. 
Then the cones ${\rm Nef}(X)$ and $ \overline{NA}(X)$ are dual via the canonical 
isomorphism $\Phi: N^1(X) \to N_1(X)^*$, constructed in Proposition \ref{dual}. 
\end{proposition}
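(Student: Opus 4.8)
The plan is to reduce the statement to the known duality on a smooth compact K\"ahler manifold by pulling everything back along a K\"ahler desingularisation $\holom{\pi}{\hat X}{X}$, in the spirit of the proof of Proposition \ref{ext}. Since $\Phi$ is an isomorphism by Proposition \ref{dual}, I read the assertion that ${\rm Nef}(X)$ and $\NAX$ are dual via $\Phi$ as the single equality
$$
{\rm Nef}(X) = \{ [\omega] \in N^1(X) \ \vert \ T(\omega) \geq 0 \ \text{for all positive closed currents } T \text{ of bidimension } (1,1) \},
$$
where the pairing is $\Phi([\omega])([T]) = T(\omega)$. I would establish the two inclusions separately, and then deduce the symmetric statement $\NAX = \{ [T] \ \vert \ T(\omega) \geq 0 \ \forall \, [\omega] \in {\rm Nef}(X)\}$ from the bipolar theorem, using that $\NAX$ is a closed convex cone (it is generated by the convex set of positive closed currents).

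For the inclusion ``$\subseteq$'', suppose $\{\omega\}$ is nef in the sense of \cite[Defn.3]{Pau98}. By \cite[Thm.1]{Pau98} and \cite[Prop.0.6]{DP04} the pulled-back class $\{\pi^*\omega\}$ is nef on the K\"ahler manifold $\hat X$. On $\hat X$ the cone $\NA{\hat X}$ is dual to the K\"ahler cone (a consequence of \cite[Cor.0.3]{DP04}; see Remark \ref{remarkkaehlerconenef} and \cite[Prop.1.8]{OP04}), so $\{\pi^*\omega\}$ pairs non-negatively with every class in $\NA{\hat X}$. Given a positive closed current $T$ of bidimension $(1,1)$ on $X$, its class lies in $\NAX$, so Proposition \ref{ext} provides a class $[\hat T] \in \NA{\hat X}$ with $\pi_*[\hat T] = [T]$ in $N_1(X)$; since $\pi^*\omega$ is again $d$-closed, this gives
$$
T(\omega) = \pi_*(\hat T)(\omega) = \hat T(\pi^*\omega) \geq 0,
$$
as required.

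The reverse inclusion ``$\supseteq$'' is essentially the computation already carried out in the proof of Proposition \ref{ext}. Assume $T(\omega) \geq 0$ for every positive closed current $T$ of bidimension $(1,1)$ on $X$. For any positive closed current $\hat T$ on $\hat X$, the push-forward $\pi_*\hat T$ is again such a current on $X$, whence
$$
\hat T(\pi^*\omega) = \pi_*(\hat T)(\omega) \geq 0.
$$
Thus $\{\pi^*\omega\}$ is non-negative on $\NA{\hat X}$, hence nef on $\hat X$ by the K\"ahler duality above, and applying \cite[Thm.1]{Pau98} and \cite[Prop.0.6]{DP04} once more shows that $\{\omega\}$ is nef, i.e. $[\omega] \in {\rm Nef}(X)$.

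The main obstacle, and the only non-formal ingredient, is the transfer of nefness across $\pi$: both inclusions hinge on the equivalence ``$\{\omega\}$ nef on $X$ $\iff$ $\{\pi^*\omega\}$ nef on $\hat X$'', which rests on the deep positivity theorems of P\u aun and Demailly--P\u aun rather than on anything elementary, together with the surjectivity $\pi_*(\NA{\hat X}) = \NAX$ of Proposition \ref{ext} needed to lift currents in the first inclusion. The remaining points --- the compatibility $\pi_*(\hat T)(\omega) = \hat T(\pi^*\omega)$, and the passage from the one-sided equality ${\rm Nef}(X) = \NAX^{\vee}$ to genuine duality via the bipolar theorem (which needs $\NAX$ to be closed and convex) --- are routine.
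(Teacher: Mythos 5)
Your proof is correct and follows essentially the same route as the paper: reduce to the single equality ${\rm Nef}(X)=\overline{NA}(X)^*$ via the bipolar theorem, and prove the nontrivial inclusion by pulling the form back to a K\"ahler desingularisation, invoking \cite[Cor.0.3]{DP04} there and descending nefness with \cite[Thm.1]{Pau98}, \cite[Prop.0.6]{DP04}. The only difference is cosmetic: the paper dismisses the inclusion ${\rm Nef}(X)\subset\overline{NA}(X)^*$ as clear, whereas you verify it by lifting currents through Proposition \ref{ext}; both treatments also implicitly use that $\overline{NA}(X)$ is closed, which your parenthetical remark justifies convexity of but not closedness.
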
 

\begin{proof} 
For a closed convex cone $V$ in some finite-dimensional real vector space we have $V=V^{**}$ \cite[Lemma 6.7]{Deb01}, so it is
sufficient to prove that
$$
{\rm Nef}(X) = \overline{NA}(X)^*.
$$
It is clear that ${\rm Nef}(X) \subset \overline{NA}(X)^*$, so we are left to prove the other inclusion.
We consider a linear form $\lambda: N_1(X) \to \bR$ such that $\lambda(T) \geq 0$
for all $[T] \in \NAX$. Choose a closed $(1,1)-$form $\omega$ with local potentials such that $T(\omega) = \lambda([T])$ then
we want to show that $[\omega] \in {\rm Nef}(X).$ 

Let $\pi: \hat X  \to X$ be a desingularisation such that $\hat X$ is a compact K\"ahler manifold, 
then $\pi^* \omega$ defines a linear form on $N_1(\hat X)$ which is non-negative
on $\overline{NA}(\hat X)$. Thus $[\pi^* \omega]$ is a nef class by \cite[Cor.0.3]{DP04},
so the class $[\omega]$ is nef by Lemma \ref{lemmanef}.
\end{proof} 

In the K\"ahler case, the cone $\NA{X}$ is usually defined right away as the dual of the K\"ahler cone. 

\begin{corollary} \label{corollarykaehlerform} 
Let $X$ be a normal compact K\"ahler threefold with rational singularities . 
Suppose that $[\eta] \in N^1(X)$ is strictly positive on $\overline {NA}(X) \setminus 0$.
Then $[\eta]$ is a K\"ahler class, i.e. can be represented by a K\"ahler form $\omega$.   
\end{corollary}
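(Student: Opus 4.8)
The plan is to reduce the statement to the duality between $\mathrm{Nef}(X)$ and $\NAX$ proved in Proposition \ref{dual1}, together with the identification of $\mathrm{Nef}(X)$ with the closure of the K\"ahler cone from Remark \ref{remarkkaehlerconenef}. Concretely, by Proposition \ref{dual1} the cone $\mathrm{Nef}(X) \subset N^1(X)$ is, via the isomorphism $\Phi$ of Proposition \ref{dual}, exactly the dual cone $\NAX^*$. Since $X$ is K\"ahler, Remark \ref{remarkkaehlerconenef} gives $\mathrm{Nef}(X) = \overline{\sK}$, where $\sK \subset N^1(X)$ is the open, convex, nonempty cone generated by K\"ahler classes; for such a cone one has ${\rm int}(\overline{\sK}) = \sK$, and every class in $\sK$ is a K\"ahler class, i.e. represented by a K\"ahler form. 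Thus it suffices to show that $[\eta]$ lies in the interior of $\mathrm{Nef}(X)$, equivalently that $\Phi([\eta])$ lies in the interior of $\NAX^*$.

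This last point is a soft convex-geometry fact. First I would record the following: for a closed convex cone $C$ in a finite-dimensional real vector space $W$ and a linear form $\ell \in W^*$, one has $\ell \in {\rm int}(C^*)$ as soon as $\ell(x) > 0$ for every $x \in C \setminus \{0\}$. I would prove the direction I need by compactness: the slice $K := \{ x \in C : \|x\| = 1\}$ is compact because $C$ is closed, so $\ell$ attains a minimum $\delta > 0$ on $K$; then any $\ell'$ with $\|\ell' - \ell\| < \delta$ satisfies $\ell'(x) \geq \delta - \|\ell'-\ell\| > 0$ on $K$, hence $\ell' \in C^*$ by homogeneity. Therefore a whole neighbourhood of $\ell$ lies in $C^*$. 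Applying this with $C = \NAX$ and $\ell = \Phi([\eta])$, which is strictly positive on $\NAX \setminus \{0\}$ by hypothesis, gives $\Phi([\eta]) \in {\rm int}(\NAX^*)$. Since $\Phi$ is a linear isomorphism it carries interiors to interiors, so $[\eta] \in {\rm int}(\mathrm{Nef}(X)) = \sK$, as desired.

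The one delicate point, which I regard as the main obstacle, is that the convex-geometry step genuinely requires $\NAX$ to be a closed cone; otherwise strict positivity on $\NAX \setminus \{0\}$ need not persist to the boundary, and the slice $K$ need not be compact. This is where the analytic input enters: normalising positive closed currents $T$ of bidimension $(1,1)$ by a fixed K\"ahler form $\omega$, their mass is comparable to the pairing $T(\omega)$, so the set $\{T : T \text{ positive closed}, \ T(\omega) = 1\}$ is weak-$*$ compact, and its image in the finite-dimensional space $N_1(X)$ is a compact base for $\NAX$ lying off the origin, whence $\NAX$ is closed (and nonzero, since it contains the class of $\omega^{n-1}$). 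Granting this, the argument is complete. I expect no further serious difficulty: the substantial content, namely the duality of Proposition \ref{dual1} and the equality $\mathrm{Nef}(X) = \overline{\sK}$, both resting on Demailly--P\v aun, has already been isolated in the cited results, and what remains is only the interior-of-the-dual-cone characterization together with the closedness of $\NAX$.
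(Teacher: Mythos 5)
Your proposal is correct and follows essentially the same route as the paper: Proposition \ref{dual1} places $[\eta]$ in the interior of ${\rm Nef}(X)$, and Remark \ref{remarkkaehlerconenef} identifies that interior with the K\"ahler cone. The paper asserts the interior step without elaboration; your compactness argument, together with the verification that $\NAX$ is a closed cone with compact base, correctly fills in what the paper leaves implicit.
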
 

\begin{proof}
By Proposition \ref{dual1} we see that $[\eta]$ lies in the interior of the cone ${\rm Nef}(X)$.
Since $X$ is K\"ahler, we have ${\rm Nef}(X)  = \overline {\sK}$ (cf. Remark \ref{remarkkaehlerconenef}), so $[\eta]$ is a K\"ahler class. 
\end{proof}

\begin{theorem} \label{kaehler1} Let $X$ be a normal compact threefold with rational singularities in class $\sC$.
Let $\eta$ be a closed real $(1,1)$-form with local potentials such that $T(\eta) > 0$ for all positive closed currents $T \in \sD^{2,2}(X) \setminus \{0\}$. 
Then  $[ \eta ]$ contains a K\"ahler form, in particular $X$ is a K\"ahler space. 
\end{theorem}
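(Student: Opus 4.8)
The plan is to transfer the problem to a Kähler desingularization, where the cone duality already proved in this section applies, and then to descend the resulting positivity back to $X$. Since $X$ lies in class $\sC$, Remark \ref{remarkkaehler} provides a desingularization $\holom{\pi}{\hat X}{X}$ with $\hat X$ a compact Kähler manifold; fix a Kähler form $\hat\omega$ on $\hat X$. First I would check that $\{\pi^*\eta\}$ is nef on $\hat X$: for every positive closed current $\hat T$ of bidimension $(1,1)$ one has $\hat T(\pi^*\eta)=(\pi_*\hat T)(\eta)\geq 0$, because $\pi_*\hat T\in\NA{X}$ and $\eta$ is nonnegative there by hypothesis; by the duality of Proposition \ref{dual1} applied on $\hat X$ this says exactly that $\{\pi^*\eta\}$ is nef. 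It is not yet a Kähler class, since it vanishes on the currents of integration of the $\pi$-contracted curves.

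Next I would correct the positivity in the exceptional directions. Arranging $\pi$ as a composition of blow-ups, one has an effective $\pi$-exceptional $\R$-divisor $D$ with $-D$ relatively ample, i.e.\ the class $-\{D\}$ is strictly positive on $(\NA{\hat X}\cap\ker\pi_*)\setminus\{0\}$, where $\pi_*\colon N_1(\hat X)\to N_1(X)$ is the push-forward of Proposition \ref{ext}. I claim $\{\pi^*\eta\}-\epsilon\{D\}$ is strictly positive on $\NA{\hat X}\setminus\{0\}$ for $0<\epsilon\ll 1$. Arguing by contradiction, take $\epsilon_k\to 0$ and normalized currents $\hat T_k$ with $(\pi^*\eta-\epsilon_k D)(\hat T_k)\leq 0$; by finite-dimensionality of $N_1(\hat X)$ and compactness a subsequence converges to some $\hat T_\infty\neq 0$ with $(\pi^*\eta)(\hat T_\infty)\leq 0$, hence $=0$ by nefness, hence $(\pi_*\hat T_\infty)(\eta)=0$, which forces $[\pi_*\hat T_\infty]=0$ by the hypothesis; thus $[\hat T_\infty]\in\ker\pi_*$ and $(-D)(\hat T_\infty)>0$ by relative ampleness, contradicting the limiting inequality. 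By Corollary \ref{corollarykaehlerform} applied on the Kähler manifold $\hat X$, the class $\{\pi^*\eta\}-\epsilon\{D\}$ is therefore Kähler; let $\omega_\epsilon$ be a Kähler form representing it.

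Now I would descend. Write $\omega_\epsilon=\pi^*\eta-\epsilon\gamma_D+i\partial\pp\hat\psi$ with $\gamma_D$ a smooth representative of $\{D\}$ and $\hat\psi$ smooth, and push forward. Using $\pi_*\pi^*\eta=\eta$, and that $D$ is $\pi$-exceptional so that $\pi(D)$ has dimension $<n-1$ and $\pi_*\gamma_D$ is $\partial\pp$-exact, the current $\Theta:=\pi_*\omega_\epsilon$ is a positive closed $(1,1)$-current whose $\partial\pp$-class is $\{\eta\}$ (Remark \ref{remarkddbarcurrent}). Since $\pi$ is biholomorphic over $X\setminus Z$, with $Z:=\pi(\mathrm{Exc}(\pi))$, the current $\Theta$ restricts to a genuine smooth Kähler form there; writing $\Theta=\eta+i\partial\pp\phi_0$ with $\phi_0$ a global $\eta$-plurisubharmonic function, $\phi_0$ is smooth on $X\setminus Z$ and tends to $-\infty$ along $Z$ (the fibres of $\pi$ over $Z$ are positive-dimensional, so $\Theta$ carries positive Lelong numbers there). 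Near each point of $Z$ a local embedding and the local $\partial\pp$-Poincaré lemma represent $\{\eta\}$ by an honest smooth local Kähler form with bounded potential $\phi_{\mathrm{loc}}$; a regularized maximum $\max_\delta(\phi_0,\phi_{\mathrm{loc}}-C)$, with $C$ chosen so that the two potentials cross in an annular region away from $Z$, glues the smooth local form to $\Theta$ and yields a global smooth function $\phi$ with $\eta+i\partial\pp\phi$ a Kähler form on $X$. Hence $\{\eta\}$ contains a Kähler form and $X$ is a Kähler space.

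The hard part will be the final descent. The production of a nef class, and even of a positive closed current in $\{\eta\}$, is essentially formal once the cone duality of Propositions \ref{dual1} and \ref{ext} is in hand; the genuine difficulty is that Definition \ref{definitionkaehler} demands a smooth positive form, so one must regularize the Kähler current $\Theta$ across the lower-dimensional bad locus $Z$ without losing strict positivity. The regularized-maximum gluing carries this out, but it hinges on controlling the relative size of the potentials near $Z$ — precisely where the positivity of the Lelong numbers of $\Theta$ along the contracted locus, together with the compactness argument of the second step, are indispensable.
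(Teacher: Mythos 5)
Your first two steps reproduce the paper's argument in slightly different packaging: pass to a K\"ahler desingularisation $\pi$, perturb $\{\pi^*\eta\}$ by a $\pi$-ample exceptional class to obtain a K\"ahler class upstairs (your compactness argument is essentially the one the paper uses in the proof of Theorem \ref{kaehler2}; the paper itself just asserts that $\{N\pi^*\eta-D\}$ is strictly positive on $\overline{NA}(\hat X)$ for $N$ large), and push forward to obtain a K\"ahler current $\Theta$ in the class $\{\eta\}$. The gap is in the final descent. Your regularized-maximum gluing treats the bad locus $Z=\pi(\mathrm{Exc}(\pi))$ as if it were a finite set of points: you take a chart $U$ around a point of $Z$, a bounded local potential $\phi_{\mathrm{loc}}$, and require that $\phi_0$ and $\phi_{\mathrm{loc}}-C$ cross ``in an annular region away from $Z$''. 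But $Z$ is in general positive-dimensional (a curve or a divisor in $X$), so every annulus $U\setminus U''$ around a point of $Z$ meets $Z$ again; on $Z\cap(U\setminus U'')$ the potential $\phi_0$ is $-\infty$ (by the very Lelong-number positivity you invoke), while $\phi_{\mathrm{loc}}-C$ stays bounded. Hence $\max_\delta(\phi_0,\phi_{\mathrm{loc}}-C)$ equals $\phi_{\mathrm{loc}}-C$ all the way out to the boundary of the chart and does not revert to $\phi_0$ there, so the local modifications do not patch into a global form.

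This is not a technicality: a purely local gluing of this kind never uses any positivity of $\{\eta\}$ restricted to $Z$, so if it worked it would show that any class carrying a K\"ahler current smooth outside a proper subvariety is K\"ahler, which is false (nef and big but non-K\"ahler classes on projective manifolds give counterexamples). The correct descent is \cite[Prop.0.6]{DP04}: the class of the K\"ahler current $\Theta$ contains a K\"ahler form if and only if $\{\eta\}|_Z$ is a K\"ahler class for every irreducible component $Z$ of the Lelong level sets. The paper verifies this hypothesis by induction on the dimension: for any positive closed current $T'\neq 0$ on such a $Z$ one has $T'(i_Z^*\eta)=(i_Z)_*(T')(\eta)>0$ because $(i_Z)_*T'$ is a nonzero positive closed current on $X$, so the induction hypothesis applies to $(Z,\eta|_Z)$ and $\{\eta\}|_Z$ is K\"ahler. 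You need to add this induction on $\dim X$ and replace your gluing by the citation of \cite[Prop.0.6]{DP04} (or reproduce its global gluing along $Z$, which is substantially harder than the pointwise version you wrote and is exactly where the restricted positivity enters).
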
 

\begin{proof} 
Let $\pi: \hat X \to X$ be a desingularisation such that $\hat X$ is a compact K\"ahler manifold and 
such that $\pi$ is a projective morphism with exceptional locus a simple normal crossings divisor.
Up to blowing up further there exists an effective $\pi$-exceptional divisor $D$ on $\hat X$ such that $-D$ is $\pi-$ample. 
Moreover for large $N$ the class
$$ 
[N \pi^*(\eta) - D] 
$$ 
is strictly positive on $\overline {NA}(\hat X)$.
By \cite[Cor.0.4]{DP04} there exists a K\"ahler form $\hat \omega$ on $\hat X$ such that
$$
[N \pi^*(\eta) - D] = [ \hat \omega ].
$$
Thus we can write
$$ 
\hat \omega  = N \pi^*(\eta) - T_D + \partial \pp \hat f
$$
where $\hat f \in \sA^0(\hat X)$ and $T_D$ the current of integration associated with $D$. 
Applying $\pi_*$ it follows that
$$ 
\pi_* (\hat \omega)  = N \eta + \partial \pp R
$$
with $R = \pi_*(\hat f)$. 
Let $E \subset X$ be the image of the $\pi$-exceptional locus. 
By Lemma \ref{lemmapush} the current $\pi_*(\omega) \vert_{X \setminus E}$ has an extension with 
local potentials and $[\pi_*(\hat \omega)] = [N \eta ]$. 
By \cite[Prop.3.3(iii)]{DP04} (cf. Remark \ref{remarksmooth} for regularisation arguments in the singular setting),
the class $[ \pi_* (\hat \omega) ] = [N \eta ]$ of the K\"ahler current $\pi_*(\hat \omega)$
contains a K\"ahler form  if and only if $[ \pi_* (\hat \omega) ]|_Z 
$ is a K\"ahler class for every irreducible
component $Z$ of the Lelong level sets. 
The K\"ahler current $\pi_*(\hat \omega) $ is smooth outside $E$, hence any Lelong level set $Z$ is contained in $E$. Since $X$ is a threefold we see that $Z$ is either a point or a curve.
The case $Z$ being a point is trivial, so suppose that $Z$ is a curve.
Since the current of integration $T_Z$ is a non-zero positive closed current
we have $T_Z(\eta) > 0$ by assumption. Thus the restriction $i_Z^*(\eta)$ to the curve $Z$ has positive degree, i.e.
$i_Z^*(\eta)$ is a K\"ahler class on the curve $Z$.
\end{proof} 

\begin{theorem} \label{kaehler2} Let $X$ be a normal compact threefold in class $\sC$ with only isolated rational singularities.
Let $\eta \in \sA^{1,1}(X)$ be a closed real (1,1)-form with local potentials such that
$T(\eta) > 0$ for all $[T] \in \overline {NA}(X) \setminus 0.$ 
Suppose that for every irreducible curve $C \subset X$ we have $[C] \neq 0$ in $N_1(X)$.
Then $[ \eta ]$ is represented by a K\"ahler class, in particular $X$ is K\"ahler. 
\end{theorem}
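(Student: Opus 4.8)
The plan is to imitate the proof of Theorem \ref{kaehler1} --- push a K\"ahler form down from a resolution --- but to exploit the two extra hypotheses, isolated singularities and $[C]\neq 0$ for every curve, in order to bypass the inductive positivity check on the Lelong level sets. First I would choose a desingularisation $\pi:\hat X\to X$ with $\hat X$ a compact K\"ahler manifold and $\pi$ projective with simple normal crossings exceptional locus, together with an effective $\pi$-exceptional divisor $D$ with $-D$ being $\pi$-ample. Since the singularities of $X$ are isolated, the image $E\subset X$ of the exceptional locus is a \emph{finite} set of points.

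The heart of the matter is to show that for $N\gg 0$ the class $\{N\pi^*\eta-D\}$ is strictly positive on $\NA{\hat X}\setminus 0$; then \cite[Cor.0.4]{DP04} produces a K\"ahler form $\hat\omega$ with $\{\hat\omega\}=\{N\pi^*\eta-D\}$, and applying $\pi_*$ gives $\{\pi_*\hat\omega\}=\{N\eta\}$, represented by a K\"ahler current smooth off $E$, exactly as in Theorem \ref{kaehler1}. For the positivity I first observe that $\pi^*\eta$ is nonnegative on $\NA{\hat X}$: indeed $\pi^*\eta\cdot\hat T=\eta\cdot\pi_*\hat T\geq 0$ because $\pi_*\hat T\in\NAX$ and $\eta$ is positive there, with equality exactly when $[\pi_*\hat T]=0$, i.e. when $[\hat T]\in\ker\pi_*$. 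A compactness argument on a slice of the salient cone $\NA{\hat X}$ then reduces the claim to the single assertion that $-D$ is strictly positive on $K_0:=\NA{\hat X}\cap\ker\pi_*$ away from the origin: if it failed, a limiting current $\hat T_\infty\neq 0$ would satisfy $\pi^*\eta\cdot\hat T_\infty=0$, hence lie in $K_0$, while $(-D)\cdot\hat T_\infty\leq 0$.

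To handle $K_0$ I would Siu-decompose a current $[\hat T]\in K_0$ as $\hat T=\sum_k\mu_k[\hat C_k]+\hat R$. Pushing forward and using that $\NAX$ is salient (a consequence of the strict positivity of $\eta$) together with the hypothesis $[C]\neq 0$ for every curve $C\subset X$, one sees that every $\hat C_k$ with $\mu_k>0$ must be $\pi$-contracted: a non-contracted $\hat C_k$ would contribute a nonzero multiple of $[\pi(\hat C_k)]$ to the vanishing class $[\pi_*\hat T]$, which a salient cone forbids. On contracted curves $-D\cdot\hat C_k>0$ by relative ampleness (the relative Kleiman criterion). The main obstacle is the diffuse residual current $\hat R$: I expect to show that $\hat R$ is supported over $E$ --- the vanishing $\pi^*\eta\cdot\hat R=\eta\cdot\pi_*\hat R=0$ should force $\hat R$ into the non-K\"ahler (null) locus of the nef class $\pi^*\eta$, which is contained in $\mathrm{Exc}(\pi)$ --- so that $\hat R$ lives on the at most two-dimensional exceptional fibres, on which $-D$ restricts to a relatively ample class and therefore pairs strictly positively with the nonzero positive current $\hat R$. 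Combining the two contributions gives $(-D)\cdot\hat T>0$.

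Finally, with $\{N\eta\}$ represented by a K\"ahler current smooth outside $E$, I would invoke \cite[Prop.0.6]{DP04}: $\{N\eta\}$ contains a K\"ahler form provided its restriction to each irreducible component of the Lelong level sets is K\"ahler. These level sets are contained in $E$ and are therefore points, so the condition is vacuous; hence $\{N\eta\}$, and with it $\{\eta\}$, is a K\"ahler class and $X$ is K\"ahler. The only step requiring genuine work is the control of the diffuse part $\hat R$: the curve hypothesis disposes cleanly of the curve part of $K_0$, and the isolatedness of the singularities keeps both the Lelong level sets and the support of $\hat R$ low-dimensional enough that relative ampleness of $-D$ finishes the argument.
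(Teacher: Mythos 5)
Your overall architecture --- perturb $\pi^*\eta$ by an anti-relatively-ample exceptional divisor, run a compactness argument on a slice of $\NA{\hat X}$, use saliency of $\NAX$ together with the curve hypothesis to kill the atomic part of a Siu decomposition, and finish with \cite[Prop.0.6]{DP04} --- is close in spirit to the paper's, and your treatment of the curve part (every non-contracted $\hat C_k$ would force a curve of class zero in $X$ because $\NAX$ is salient) is correct. But there are two genuine gaps.

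First, the assertion that the image $E\subset X$ of the $\pi$-exceptional locus is a finite set of points is unjustified. Since $X$ is only assumed to lie in class $\mathcal C$, a modification $\pi:\hat X\to X$ with $\hat X$ compact K\"ahler is in general \emph{not} an isomorphism over $X_{\nons}$: the obstruction to $X$ being K\"ahler may sit entirely in the smooth locus, so $\pi$ may have to blow up curves there, and $E$ is then a union of curves and points (its codimension is only $\geq 2$ by normality). The isolated-singularity hypothesis controls an honest desingularisation, but such a desingularisation need not be K\"ahler --- and you cannot assume it is, because that is essentially what is being proved. This is exactly why the paper's proof is in two steps: a smooth case, where the exceptional image is a curve plus points and is handled by Siu's theorem, the curve hypothesis and an approximation argument; and then a reduction of the singular case to the smooth one via an honest resolution, at which point isolated singularities finally make the exceptional image finite so that \cite[Prop.0.6]{DP04} applies with no condition to check. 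Your single-pass argument conflates these two different modifications.

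Second, and more seriously, the control of the diffuse residual current $\hat R$ --- which you correctly identify as the main obstacle --- does not work as proposed. The class $\{\pi^*\eta\}$ is only nef, not known to be big, so it has no analytic null locus; even for a nef and big class, a diffuse positive closed current with $\pi^*\eta\cdot\hat R=0$ need not be supported on the non-K\"ahler locus (that locus controls subvarieties on which the class degenerates, not the support of arbitrary positive currents in its kernel); and the inclusion of that locus in ${\rm Exc}(\pi)$ presupposes that $\eta$ is K\"ahler away from the exceptional image, i.e.\ the conclusion. What is actually needed is that a nonzero positive closed current on $X$ whose class vanishes in $N_1(X)$ and which puts no mass on curves cannot exist. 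The paper proves this in its Step 1 by pairing such a current with the K\"ahler current $\pi_*\hat\omega$ through an increasing sequence of cutoff functions supported off the exceptional image and letting smooth approximations of $\pi_*\hat\omega$ converge: the vanishing of the class forces the limit to be zero, hence the current has no mass off the exceptional image and then vanishes. Some substitute for this regularisation argument is indispensable; without it the strict positivity of $-D$ on $\ker\pi_*\cap\NA{\hat X}\setminus\{0\}$, and hence your whole compactness step, is not established.
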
 

\begin{proof} 
{\em Step 1. Suppose that $X$ is smooth.} We argue by contradiction. 
By Theorem~\ref{kaehler1} there exists a positive closed current $T \neq 0$ such that $T(\eta) = 0$.
By our assumption this implies that $[T] = 0$ in $N_1(X)$. \\
Let \holom{\pi}{\hat X}{X} be a modification of $X$ such that $\hat X$ is a compact K\"ahler manifold.
Let $S \subset  X$ be the image of the $\pi$-exceptional locus. 
We write $S = \bigcup C_j \cup A$ with $C_j$ the irreducible components of dimension 1 and
$A$ a finite set. We consider the positive closed currents $\chi_S T$ and $\chi_{X \setminus S}T$. 
Since $\eta$ is non-negative on $\NAX$ the
decomposition
$$  T = \chi_S T + \chi_{X \setminus S}T $$
implies that $\chi_S T(\eta) = \chi_{X \setminus S}T(\eta) = 0$. By our assumption this implies 
$[\chi_ST] = [\chi_{X \setminus S}T] = 0. $
By a theorem of Siu \cite{Siu74} we have 
$$ \chi_S T = \sum \alpha_j T_{C_j} $$
with $\alpha_j \geq 0 $ and $T_{C_j}$ denoting the current of integration over $C_j$. 
Since $[\chi_S T] = 0$ and every curve class is non-zero, we conclude that $\alpha_j = 0$ for all $j$. Thus we have $\chi_S T = 0$. 
\vskip .2cm 
It remains to prove that $\chi_{X \setminus S}T = 0.$ 
Fix a K\"ahler form $\hat \omega $ on $\hat X$ and consider the positive closed current
$$R = \pi_*(\hat \omega).$$
Choose a sequence $(\lambda_k)$ of positive $(2,2)-$forms converging weakly to the positive current $\chi_{X \setminus S}T,$  such that 
$d \lambda_k $ converges weakly to $d \chi_{X \setminus S}T = 0,$ see e.g. \cite[p.373]{GH78}. 
Using Hodge decomposition of forms, we may choose a sequence $(\mu_k)$ of $3-$forms such that $\lambda_k - d \mu_k$ converges weakly to $0.$ 
In fact, consider the Hodge decomposition for forms, see e.g. \cite[p.84]{GH78} (in the analytic $(p,q)-$setting) 
$$ \lambda_k = \sH(\lambda_k) \oplus dd^*G(\lambda_k) \oplus d^*dG(\lambda_k),$$
where $\sH$ is the harmonic projection and $G$ is the Green operator. 
Since $[\chi_{X \setminus S}T] = 0,$ the sequence $\sH(\lambda_k)$ converges to $0$; and since $(d\lambda_k )$ converges weakly to $0$, the
sequence $(d^*dG(\lambda_k)) = (G(d^*d\lambda_k))$ converges to $0$, too. Thus we can set $\mu_k := d^*G(\lambda_k).$ 
\\
Choose furthermore a sequence $\varphi_j$ of non-negative $\mathcal C^{\infty}$ functions with compact support in $X \setminus S$ converging increasingly to 
$\chi_{X \setminus S}.$ Then $\varphi_j R$ is a positive (non-closed) smooth $(1,1)-$form on $X$, hence $\chi_{X \setminus S}T(\varphi_jR) \geq 0.$
Moreover, the sequence $(\chi_{X \setminus S}T(\varphi_jR))$ is increasing. 
We are going to show that $$\chi_{X \setminus S}T(\varphi_jR) = 0$$ for all $j.$ Then the support of $(\chi_{X \setminus S}T)$ 
is contained in $S$, hence $$\chi_{X \setminus S}T = 0.$$
So assume that $\chi_{X \setminus S}T(\varphi_jR) \ne 0$ for some, hence almost all, $j.$ Thus we find positive constant $C$ and a number $j_0$ such that 
$$\chi_{X \setminus S}T(\varphi_jR)  \geq C$$
for $j \geq j_0.$ Thus for all $j \geq j_0$ there is a number $k(j)$ such that 
$$ 
R(\varphi_j \lambda_{k(j)}) = \int_X \lambda_{k(j)} \wedge \varphi_j R \geq \frac{C}{2}.
$$
On the other hand, 
$$ R((1-\varphi_j) \lambda_{k(j)}) = \int_{\hat X} \pi^*((1-\varphi_j) \lambda_{k(j)}) \wedge \hat \omega $$
converges to $0$ for $j \rightarrow \infty$ and 
$$ 
\lim_{j \rightarrow \infty} R(\lambda_{k(j)}) = \lim_{j \rightarrow \infty} \int_{\hat X} \pi^*(\lambda_{k(j)}) \wedge \hat \omega = \lim_{j \rightarrow \infty} \int_{\hat X} \pi^*(\lambda_{k(j)} - d\mu_{k(j)}) \wedge \hat \omega  = 0,$$
giving a contradiction.

{\em Step 2. Reduction to the smooth case.} Let $\holom{\mu}{\hat X}{X}$ be a desingularisation, and let
$E$ be an effective $\mu$-exceptional divisor such that $-E$ is $\mu$-ample. Let $\hat C \subset \hat X$
be an irreducible curve. If $\hat C$ is $\mu$-exceptional, then $-E \cdot \hat C>0$, so $[\hat C] \neq 0$ in $N_1(\hat X)$.
If $\hat C$ is not $\mu$-exceptional, then we have
$$
\mu_* ([\hat C]) = [\mu(\hat C)] \neq 0, 
$$
so $[\hat C] \neq 0$ in $N_1(\hat X)$. We claim that there exists a number $n \in \N$ such that 
$$T(\mu^* \eta - \frac{1}{n} E)>0$$
for all $[T] \in \overline {NA}(\hat X) \setminus 0$. Assuming this for the time being, let us see how to conclude: 
by Step 1 we know that $\hat X$ is a K\"ahler manifold, actually $[\mu^* \eta - \frac{1}{n} E) ]$ is a K\"ahler class. Thus there exists a K\"ahler form 
$$\hat \omega \in [\mu^* \eta - \frac{1}{n} E) ].$$
The push-forward $\mu_*(\hat \omega)$ is a K\"ahler current on $X$ and the Lelong level sets of  $\mu_*(\hat \omega)$
are contained the singular locus of $X$. Since $X$ has isolated singularities, we see that $[ \mu_*(\hat \omega) ]|_Z$
is a K\"ahler class for every $Z$ an irreducible component of the Lelong level sets. By \cite[Prop.3.3(iii)]{DP04} (cf. Remark \ref{remarksmooth}) this shows
that $[ \mu_*(\hat \omega) ]$ contains a K\"ahler form.

{\em Proof of the claim.} Fixing a hermitian metric $h$ on $\hat X$, we see that it is sufficient
to prove that $$T(\mu^* \eta - \frac{1}{n} \psi)>0$$
for all $[T] \in (\overline {NA}(\hat X) \cap H)$. Here $\psi$ is a smooth representative of $E$ in $H^{1,1}_{\rm BC}(\hat X)$ and $H$ is the hypersurface of classes having norm one 
with respect to a fixed norm on the finite-dimensional space $N^1(\hat X)$. Note that $\overline {NA}(\hat X) \cap H$ is a compact set, in
particular the function 
$$
e: \overline {NA}(\hat X) \cap H \rightarrow \R, \ T \mapsto T(E)
$$
is bounded from above. Arguing by contradiction we assume that for all $n \in \N$  there exists a class 
$[T_n] \in (\overline {NA}(\hat X) \cap H)$, represented by a positive closed current $T_n,$  such that
\begin{equation} \label{curr}
T_n(\mu^* \eta - \frac{1}{n} E) \leq 0.
\end{equation}
Since $\overline {NA}(\hat X) \cap H$ is compact, we can suppose, up to taking subsequences,
that the sequence $[T_n]$ converges to a limit $[T_\infty] \neq 0$, represented by a positive closed current $T_{\infty}$. 
Since $T_n(E)$ is bounded, the equation \eqref{curr} simplifies to
$$
T_\infty(\mu^* \eta) \leq 0.
$$
Since $\eta$ is positive on $\NAX \setminus \{0\},$ this implies $[\mu_*(T_\infty)]=0$. Since
$-E$ is $\mu$-ample we conclude that $T_\infty(E)<0$. Yet by continuity this implies that
$T_n(E)<0$ for $n \in \N$ sufficiently large. However, \eqref{curr} is equivalent to
$$
T_n(E) \geq n \ T_n(\mu^* \eta) \geq 0,
$$
a contradiction.
\end{proof}

The importance of the dual K\"ahler cone $\NAX$ in our context lies in the following 

\begin{definition} \label{definitioncontraction}
Let $X$ be a normal $\Q$-factorial compact K\"ahler space with at most terminal  singularities,
and let $\R^+ [\Gamma_i]$ be a $K_X$-negative extremal ray in $\NAX$.
A contraction of the extremal ray $\R^+ [\Gamma_i]$ is
a morphism \holom{\varphi}{X}{Y} onto a normal compact K\"ahler space
such that $-K_X$ is $\varphi$-ample and
a curve $C \subset X$ is contracted if and only if $[C] \in \R^+ [\Gamma_i]$.
\end{definition}

\section{Subvarieties of K\"ahler threefolds}

\subsection{Remarks on adjunction} \label{subsectionsurfaces}
Let $X$ be normal  $\Q$-factorial compact K\"ahler threefold with at most terminal  singularities. 
Let $S \subset X$ be a prime divisor, i.e. an irreducible and reduced compact surface. Let $m \in \N$ be the smallest positive integer such that
both $mK_X$ and $mS$ are Cartier divisors on $X$. Then the canonical divisor $K_S \in \pic(S) \otimes \Q$ is defined by 
$$
K_S := \frac{1}{m} (mK_X+mS)|_S.
$$
Since $X$ is smooth in codimension two, there
exist at most finitely many points $\{p_1, \ldots p_q \}$ where $K_X$ and $S$ are not Cartier. 
Thus by the adjunction formula $K_S$ is isomorphic to the dualising sheaf $\omega_S$ on $S \setminus  \{p_1, \ldots p_q \}$.

Let now \holom{\nu}{\tilde S}{S} be the normalisation. Then we have
\begin{equation} \label{equationconductor}
K_{\tilde S} \sim_\Q \nu^* K_S - N,
\end{equation}
where $N$ is an effective Weil divisor defined by the conductor ideal. Indeed this formula holds 
by \cite{Rei94} for the dualising sheaves. Since $\sO_{\tilde S}(\nu^* K_S)$ is isomorphic to $\nu^* \omega_S$ on the complement
of $\fibre{\nu}{p_1, \ldots p_q}$, the formula holds for the canonical divisors.

Let \holom{\mu}{\hat S}{\tilde S} be the minimal resolution of the normal surface $S$, then we have
$$
K_{\hat S} \sim_\Q \mu^* K_{\tilde S} - N',
$$
where $N'$ is an effective $\mu$-exceptional divisor \cite[4.1]{Sak81}. Thus if \holom{\pi}{\hat S}{S} is the composition $\nu \circ \mu$, there exists an
effective, canonically defined divisor $E \subset \hat S$ such that
\begin{equation} \label{equationresolve}
K_{\hat S} \sim_\Q \pi^* K_S - E.
\end{equation}
Let $C \subset S$ be a curve such that $C \not\subset S_{\sing}$. Then the morphism $\pi$ is an isomorphism in the generic point of $C$,
and we can define the strict transform $\hat C \subset \hat S$ as the closure of $C \setminus S_{\sing}$. Since $\hat C$ is an (irreducible) curve that is not contained in 
the divisor $N$ defined by the conductor, we have $\hat C \not\subset E$. By the projection formula
and \eqref{equationresolve} we obtain
\begin{equation} \label{inequalitycanonical}
K_{\hat S} \cdot \hat C \leq K_S \cdot C.
\end{equation}

\subsection{Divisorial Zariski decomposition for $K_X$}
Let $X$ be a  normal  $\Q$-factorial compact K\"ahler threefold with at most terminal singularities.
If $K_X$ is not pseudoeffective, we know by a theorem of Brunella \cite{Bru06} 
that $X$ is uniruled. In particular we have a covering family of rational curves such that $K_X \cdot C<0$.

From now on suppose that $K_X$ is pseudoeffective.
By  \cite[Thm.3.12]{Bou04} we have a divisorial Zariski decomposition\footnote{The statements in \cite{Bou04} are for complex compact manifolds, 
but generalise immediately to our
situation: take $\holom{\mu}{X'}{X}$ a desingularisation, and let $m \in \N$ be the Cartier index of $K_X$. Then
$\mu^* (mK_X)$ is a pseudoeffective line bundle with divisorial Zariski decomposition 
$\mu^* (mK_X)= \sum \eta_j S_j' + N(mK_X)'$. The decomposition of $K_X$ is defined by the push-forwards
$\mu_* (\frac{1}{m} \sum \eta_j S_j')$ and $\mu_* (\frac{1}{m} N(mK_X)')$. Since a prime divisor $D \subset X$
is not contained in the singular locus of $X$, the decomposition has the stated properties.}
\begin{equation} \label{Bdecomposition}
K_X = \sum_{j=1}^r \lambda_j S_j + N(K_X),
\end{equation}
where the $S_j$ are integral surfaces in $X$, the coefficients $\lambda_j \in \R^+$ and $N(K_X)$ is a 
pseudoeffective class which is nef in codimension one \cite[Prop.2.4]{Bou04}, that is for every surface $S \subset X$
the restriction $N(K_X)|_S$ is pseudoeffective.

\begin{lemma} \label{lemmasurfaces}
Let $X$ be a  normal  $\Q$-factorial compact K\"ahler threefold with at most terminal singularities
such that $K_X$ is pseudoeffective. Let $S$ be a surface 
such that $K_X|_S$ is not pseudoeffective. Then 
$S$ is one of the surfaces $S_j$ in the divisorial Zariski decomposition \eqref{Bdecomposition} of $K_X$.
Moreover $S=S_j$ is Moishezon, moreover any desingularisation $\hat S_j$ is a uniruled projective surface.
\end{lemma}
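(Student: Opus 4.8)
The plan is to treat the two assertions in turn: first that $S$ must be one of the surfaces $S_j$ appearing in \eqref{Bdecomposition}, and then that such an $S$ is projective and uniruled. For the first assertion I would restrict the divisorial Zariski decomposition $K_X = \sum_j \lambda_j S_j + N(K_X)$ to $S$. If $S$ were different from every $S_j$, then $S\not\subset \supp S_j$ for all $j$, so each $S_j|_S$ is an effective divisor on $S$ and in particular pseudoeffective; since $N(K_X)$ is nef in codimension one, $N(K_X)|_S$ is pseudoeffective as well, and the $\lambda_j$ are non-negative. Hence $K_X|_S = \sum_j \lambda_j S_j|_S + N(K_X)|_S$ would be pseudoeffective, contradicting the hypothesis. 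Therefore $S = S_j$ for some $j$, and in particular it occurs with a strictly positive coefficient $\lambda_j > 0$.

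For the second assertion I would pass to the resolution $\pi: \hat S \to S$ of Subsection~\ref{subsectionsurfaces}, where by \eqref{equationresolve} we have $K_{\hat S}\sim_\Q \pi^* K_S - E$ with $E$ effective and $K_S = (K_X+S)|_S$. The crucial point is that the self-intersection $S|_S$, which one cannot control by naive adjunction, can be eliminated using that $S=S_j$ has positive coefficient. Writing $D' = \sum_{k\neq j}\lambda_k S_k$ and solving $K_X|_S = \lambda_j S|_S + D'|_S + N(K_X)|_S$ for $S|_S$, substitution into $K_S = K_X|_S + S|_S$ gives
\[
K_{\hat S} \sim_\Q \frac{\lambda_j+1}{\lambda_j}\,\pi^*(K_X|_S) - \left( \frac{1}{\lambda_j}\pi^*(D'|_S) + \frac{1}{\lambda_j}\pi^*(N(K_X)|_S) + E \right).
\]
Every term in the bracket is pseudoeffective: $D'|_S$ is effective because $S$ is not a component of $D'$, $N(K_X)|_S$ is pseudoeffective, pull-backs of pseudoeffective classes under the birational $\pi$ remain pseudoeffective, and $E\geq 0$. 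On the other hand $\pi^*(K_X|_S)$ is not pseudoeffective, since $\pi_*\pi^*=\id$ and push-forward preserves pseudoeffectivity. As a positive multiple of a non-pseudoeffective class minus a pseudoeffective class, $K_{\hat S}$ is therefore not pseudoeffective.

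It then remains to invoke surface theory. By Remark~\ref{remarkkaehler} the surface $S$ is Kähler, and so is its resolution $\hat S$; thus $\hat S$ is a smooth compact Kähler surface with non-pseudoeffective canonical bundle. By the Enriques--Kodaira classification (a smooth compact Kähler surface is uniruled if and only if its canonical bundle is not pseudoeffective) such a surface is uniruled, hence rational or ruled and in particular projective. Since $\pi$ is birational, the rational curves sweeping out $\hat S$ map to rational curves sweeping out $S$, so $S$ is uniruled; and $S=\pi(\hat S)$ is the image of a projective surface, hence Moishezon, and being Kähler it is projective. I expect the main obstacle to be the second step: one must resist the temptation to argue directly on $S$ via adjunction (where $S|_S$ is uncontrolled) and instead exploit the positivity of $\lambda_j$ to rewrite $K_{\hat S}$ purely in terms of the non-pseudoeffective class $K_X|_S$ and manifestly pseudoeffective corrections; some care is also needed because $S$ may be non-normal and singular, which is precisely why one works on $\hat S$ and uses the conductor-corrected formula \eqref{equationresolve}.
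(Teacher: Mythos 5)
Your proposal is correct and follows essentially the same route as the paper: exclude $S\neq S_j$ by restricting the divisorial Zariski decomposition, then use $\lambda_j>0$ to solve for $S|_S$ and express $K_S=(K_X+S)|_S$ as a positive multiple of the non-pseudoeffective class $K_X|_S$ minus a pseudoeffective class, and finally pass to $\hat S$ via \eqref{equationresolve} and the Enriques--Kodaira classification. The only cosmetic difference is that you perform the adjunction computation after pulling back to $\hat S$, while the paper first shows $K_S$ is not pseudoeffective on $S$ itself and then pulls back.
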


\begin{proof}
Suppose first that $S \neq S_j$ for every $j \in \{ 1, \ldots, r \}$.
Then we have
$$
K_X|_{S} = \sum_{j=1}^r \lambda_j (S \cap S_j) + N(K_X)|_{S}.
$$
Since $N(K_X)|_S$ is pseudoeffective, we see that $K_X|_{S}$ is pseudoeffective, a contradiction.
Thus we have $S=S_j$ for some $j$ and (up to renumbering) we can suppose that $S=S_1$.
We have
$$
S = S_1 = \frac{1}{\lambda_1} K_X - \frac{1}{\lambda_1} (\sum_{j=2}^r \lambda_j S_j +  N(K_X)),
$$
so by adjunction
$$
K_{S} = (K_X+S)|_{S} = \frac{\lambda_1+1}{\lambda_1} K_X|_S - \frac{1}{\lambda_1} (\sum_{j=2}^r \lambda_j (S_j \cap S) +  N(K_X)|_S).
$$
By hypothesis $K_X|_S$ is not pseudoeffective and $- \frac{1}{\lambda_1} (\sum_{j=2}^r \lambda_j (S_j \cap S) +  N(K_X)|_S)$ is anti-pseudoeffective. Thus $K_S$ is not pseudoeffective.

Let now
\holom{\pi}{\hat{S}}{S} be the composition of normalisation and minimal resolution of the normalised surface,
then by \eqref{equationresolve} there exists an effective divisor $E$ such that
$$
K_{\hat{S}} \sim_\Q \pi^* K_{S} - E. 
$$
Thus $K_{\hat{S}}$ is not pseudoeffective, in particular $\kappa(\hat S)=-\infty$. 
By Remark \ref{remarkkaehler} the surface $\hat S$ is K\"ahler, 
so it follows from the Castelnuovo-Kodaira classification that $\hat{S}$ is covered by rational curves,
in particular it is a projective surface \cite{BHPV04}. Thus $S$ is Moishezon.
\end{proof}

\begin{corollary} \label{corollaryalgebraicallynef}
Let $X$ be a  normal  $\Q$-factorial compact K\"ahler threefold with at most terminal singularities
such that $K_X$ is pseudoeffective. Then $K_X$ is nef if and only if
$$
K_X \cdot C \geq 0 
$$
for every curve $C \subset X$.
\end{corollary}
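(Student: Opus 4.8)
The plan is to reformulate both conditions as positivity of the class $[K_X] \in N^1(X)$ (well defined because a multiple of $K_X$ is Cartier) and to use the duality of Proposition \ref{dual1}: $K_X$ is nef if and only if $[K_X]$ is non-negative on $\NAX$. One implication is then immediate, since by Remark \ref{remarkNE} we have $\NEX \subset \NAX$ and $T_C \in \NEX$ for every curve $C$; thus a nef $K_X$ satisfies $K_X \cdot C = T_C([K_X]) \geq 0$ for all $C$.

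For the converse I assume $K_X \cdot C \geq 0$ for every curve $C$ and aim to show $T(K_X) \geq 0$ for every positive closed current $T$ of bidimension $(1,1)$; by the duality above this yields that $K_X$ is nef. Using Siu's decomposition \cite{Siu74} I first write $T = \sum_k \alpha_k T_{C_k} + R$ with $\alpha_k > 0$, the $C_k$ curves, and $R$ a positive closed current whose Lelong upperlevel sets are finite. The hypothesis makes $\sum_k \alpha_k (K_X \cdot C_k) \geq 0$, so it remains to prove $R(K_X) \geq 0$. I would next separate the part $R_0$ of $R$ that puts no mass on any surface from the part $R_1$ carried by surfaces. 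For $R_0$, which is movable in the sense of Boucksom, the duality between the pseudoeffective and movable cones \cite{BDPP04, Bou04} gives $R_0(K_X) \geq 0$, because $K_X$ is pseudoeffective by assumption.

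It remains to control the surface-carried part $R_1$. For each surface $S$ charged by $R_1$ I would restrict $R_1$ to $S$ and transport it, via the composition of normalisation and minimal resolution $\holom{\pi}{\hat S}{S}$ of \eqref{equationresolve}, to a positive closed $(1,1)$-current on the smooth surface $\hat S$. Its Siu decomposition there produces curves contained in $S$, on which $K_X$ is non-negative by hypothesis, together with a residual class $\nu$ that is nef on $\hat S$; it then suffices to check $\nu \cdot \pi^*(K_X|_S) \geq 0$. If $K_X|_S$ is pseudoeffective this is the inequality nef $\cdot$ pseudoeffective $\geq 0$ on a surface. If $K_X|_S$ is not pseudoeffective, then Lemma \ref{lemmasurfaces} forces $S$ to be projective (and uniruled), so $\nu$ is a limit of curve classes against which $\pi^* K_X$ is again non-negative. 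Summing the contributions gives $R(K_X) \geq 0$, as desired.

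The step I expect to be the main obstacle is precisely this localisation: splitting $R$ into the movable part $R_0$ and the surface-carried part $R_1$, and restricting $R_1$ to the charged surfaces, while keeping all pieces positive and closed so that their intersection numbers with $[K_X]$ are the cohomological ones. This is the same delicate point that is handled by the cut-off arguments in the proof of Theorem \ref{kaehler2}, and once it is in place the case distinction governed by Lemma \ref{lemmasurfaces} closes the argument.
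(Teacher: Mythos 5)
Your forward implication and the reduction via Proposition \ref{dual1} are fine, but the converse has a genuine gap at the step $R_0(K_X)\geq 0$. What you need there is that a positive closed current of bidimension $(1,1)$ putting no mass on any surface pairs non-negatively with the pseudoeffective class $K_X$, and this is not the easy direction of any available duality. The pseudoeffective/movable duality of \cite{BDPP04} is proved only for projective manifolds, and even there the elementary inequality goes the other way: classes of the form $\mu_*(\tilde\omega_1\cdots\tilde\omega_{n-1})$ pair non-negatively with pseudoeffective classes because one can pull the $(1,1)$-class back to $\tilde X$. To apply this to your $R_0$ you would first need to know that the class of an arbitrary positive closed bidimension-$(1,1)$ current without divisorial mass lies in the movable cone; that is precisely the hard direction of the (transcendental) duality conjecture and is not available for compact K\"ahler threefolds, certainly not from \cite{Bou04}. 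Concretely, writing $K_X=\sum\lambda_j S_j+N(K_X)$, your step would require both $\{R_0\}\cdot S_j\geq 0$ (an orthogonality-type statement: ``no mass on $S_j$ implies non-negative intersection with $S_j$'', which is not known) and $\{R_0\}\cdot N(K_X)\geq 0$ (which would require pulling $R_0$ back under modifications). The secondary steps --- that $\mathbf{1}_S R$ is closed, that it is the pushforward of a positive closed $(1,1)$-current from $\hat S$, and that the Siu residual on $\hat S$ has nef class --- are all repairable but each needs an argument you have not supplied; the last one follows because Lelong numbers of any positive current in a class dominate Boucksom's minimal multiplicities, which on a surface compute the Zariski negative part.

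The paper's proof avoids currents altogether. It invokes \cite[Prop.3.4]{Bou04} to produce, for a pseudoeffective class that is not nef, an irreducible subvariety of its non-nef locus on which the restriction is not pseudoeffective; the hypothesis $K_X\cdot C\geq 0$ for all curves rules out the case of a point or a curve, so this subvariety is a surface $S$ with $K_X|_S$ not pseudoeffective. Lemma \ref{lemmasurfaces} then shows $S$ is projective and uniruled, so $K_X|_S$ is negative on a covering family of curves, contradicting the hypothesis. If you want to rescue your approach, the appeal to movable/pseudoeffective duality has to be replaced by a restriction argument of exactly this kind.
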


\begin{proof}
One implication is trivial. Suppose now that $K_X$ is nef on all curves $C$. 
We will argue by contradiction and suppose that $K_X$ is not nef. 
Since $K_X$ is pseudoeffective and the restriction to every curve is nef, there exists by \cite[Prop.3.4]{Bou04} an irreducible surface $S \subset X$ such that $K_X|_S$ is not pseudoeffective. 
By Lemma \ref{lemmasurfaces} a desingularisation \holom{\pi}{\hat S}{S} of the surface $S$ is projective,
and the divisor $\pi^* K_X|_{S}$ is not pseudoeffective, so there exists a covering family of curves $C_t \subset S$
such that 
$$
K_X \cdot C_t = (\pi^* K_X|_{S}) \cdot C_t < 0,
$$
a contradiction. 
\end{proof}

\subsection{Very rigid curves}

\begin{definition} \label{definitionveryrigid}
Let $X$ be a  normal $\Q$-factorial K\"ahler threefold with at most terminal  singularities.
We say that a curve $C$ is very rigid if 
$$
\dim_{mC} \Chow{X} = 0
$$
for all $m>0$.
\end{definition}

\begin{remark} \label{remarkkollar}
Koll\'ar \cite[Defn.4.1]{Kol91b} defines a curve $C \subset X$ to be very rigid if there is no flat family of one dimensional
subschemes $D_t \subset X$ parametrised by a pointed disc $0 \in \Delta$ such that 
\begin{itemize}
\item $\supp D_0=C$,
\item $\supp D_t \not\subset C$ for $t \neq 0$, and
\item $D_t$ is purely one dimensional for $t \neq 0$.
\end{itemize} 
A curve that is very rigid in the sense of Definition \ref{definitionveryrigid}, is also very rigid in the sense of Koll\'ar: using the
natural map from the normalisation of the Douady space $\mathcal H(X)$ to the cycle space $\Chow{X}$ \cite[p.121]{Bar75},
a family $(D_t)_{t \in \Delta}$ defines a positive-dimensional subset of $\Chow{X}$ through a point
corresponding to $mC$ for some $m>0$.
\end{remark}

\begin{theorem} \label{theoremveryrigid} 
Let $X$ be a normal $\mathbb Q$-factorial K\"ahler threefold with at most terminal singularities. Let $C \subset X$ be
an irreducible very rigid curve such that $K_X \cdot C < 0.$ Then $C$ is a rational curve and 
$$ K_X \cdot C \geq -1.$$ 
\end{theorem}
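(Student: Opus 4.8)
The plan is to pit the very rigid hypothesis against lower bounds for the dimension of the Douady space $\mathcal H(X)$ (and of spaces of maps), so that any negativity of $K_X \cdot C$ beyond what a terminal point can absorb forces $C$, or some multiple $mC$, to move. A preliminary remark fixes the geometry: terminal threefold singularities are isolated, so $X_{\sing}$ is finite; if $C$ avoided $X_{\sing}$ it would be a locally complete intersection curve in the smooth locus, and adjunction gives the genus-independent identity $\chi(N_{C/X}) = -K_X \cdot C$, whence $\dim_{[C]} \mathcal H(X) \geq -K_X \cdot C > 0$ and $C$ would move in $\Chow{X}$, against $\dim_{C}\Chow{X} = 0$. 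Hence $C$ meets $X_{\sing}$, and the content of the statement is that these finitely many terminal points absorb at most one unit of negativity.

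First the inequality. I would deform $C$ as a subscheme: on the smooth locus $\chi(N_{C/X}) = -K_X \cdot C$ independently of the genus, so on $X$ I expect a lower bound
\[
\dim_{[C]} \mathcal H(X) \;\geq\; -K_X \cdot C - \delta,
\]
where $\delta$ gathers the local contributions of the terminal points of $X$ lying on $C$. Granting $\delta \leq 1$, very rigidity --- transported from $\mathcal H(X)$ to $\Chow{X}$ by the map of \cite[Thm.6.3]{Kol96} as in Remark \ref{remarkkollar} --- forces $\dim_{[C]}\mathcal H(X) = 0$, hence $-K_X \cdot C \leq 1$, that is $K_X \cdot C \geq -1$.

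Rationality I would extract from bend and break in the analytic category \cite{Kol91,Kol96}. Writing $f \colon \tilde C \to X$ for the normalisation of genus $g$ and precomposing with covers $\tilde C_m \to \tilde C$, I would compare the Riemann--Roch lower bounds for $\Hom(\tilde C_m, X)$ with the bounded family of those maps whose image is exactly $C$, which factor through the fixed $f$: when the former outgrows the latter some deformation must acquire a varying image, so $mC$ would move unless the family first degenerates, by bend and break, into a cycle containing a rational curve that very rigidity then identifies with $C$. This disposes of $g = 1$ directly; the remaining case $g \geq 2$, where the degree growth no longer beats the genus, I would settle using the local structure of the terminal points through which $C$ passes. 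Either way $g = 0$.

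The main obstacle is the singular contribution $\delta$: on a singular variety the clean count $\chi(N_{C/X}) = -K_X \cdot C$ is only a smooth-locus computation, and where $C$ meets $X_{\sing}$ the curve need not be a local complete intersection, so $N_{C/X}$ must be replaced by the appropriate deformation sheaf. Identifying this correction and bounding it by one unit --- most cleanly through the explicit classification of terminal threefold singularities, or via the index-one cover --- is the crux. It is also the step at which one must ensure that the deformations produced genuinely move the $1$-cycle in $\Chow{X}$, rather than merely thickening $C$ or drifting within the finite set $X_{\sing}$, which is precisely why very rigidity is imposed on every multiple $mC$ and not only on $C$ itself.
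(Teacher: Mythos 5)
Your plan correctly isolates the two difficulties, but it resolves neither, and they are the entire content of the theorem. For the inequality, everything rests on the asserted estimate $\dim_{[C]}\mathcal H(X)\geq -K_X\cdot C-\delta$ with $\delta\leq 1$, which you flag as the crux and leave open. This is not a routine local correction: at a point where $K_X$ has index $m>1$ the curve need not be lci, no off-the-shelf bound of this shape exists, and the local index-one covers you mention do not glue to anything global along $C$. The paper's Step 1 is exactly the machinery required (Koll\'ar's cyclic covering trick): one algebraizes the formal completion $\hat X$ of $X$ along $C$ (possible because $-K_X|_C$ is ample), picks a divisor $D$ meeting $C$ transversally in one smooth point and $d\equiv -mK_X\cdot C\ (\mathrm{mod}\ m)$, proves that $\sO_{\hat X}(-dD-mK_{\hat X})$ is $m$-divisible in $\pic(\hat X)$ via the divisibility of the kernels of $\pic(X_{k+1})\to\pic(X_k)$, and takes the normalized cyclic cover $p:\overline X\to\hat X$ branched over $D$ and the non-Gorenstein points. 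Then $\overline X$ is Gorenstein terminal, hence lci, so the clean deformation bound applies there; the ramification formula $K_{\overline X}=p^*(K_{\hat X}+(1-\tfrac{1}{e})D)$ with $e=m/\gcd(d,m)$, combined with the arithmetic fact that $K_X\cdot C\in\tfrac{1}{e}\Z$, gives $K_{\overline X}\cdot\overline C\leq-2$ as soon as $K_X\cdot C<-1$, so $\overline C$ moves and hence some multiple of $C$ moves. Your ``$\delta\leq 1$'' is the output of this construction, not a hypothesis one can grant.

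For rationality, your $\Hom$-scheme count fails for $g\geq 2$, as you admit: the genus of an \'etale cover $\tilde C_m\to\tilde C$ grows linearly in the degree and swamps the gain in $-K_X\cdot f_*[\tilde C_m]$, and appealing to the local structure of the terminal points does not repair this. The paper sidesteps the $\Hom$-scheme entirely: for any $g\geq 1$ take \'etale covers $h:C'\to C$ of arbitrarily large degree, extend them to \'etale covers $g:U'\to U$ of a tubular neighbourhood $U$ of $C$ using $\pi_1(U)\simeq\pi_1(C)$, so that $K_{U'}\cdot C'=\deg(h)\,K_X\cdot C$ eventually drops below $-1$; Step 1, being local around the curve, then shows $C'$ is not very rigid, hence neither is $C$. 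As written, your proposal therefore contains no complete argument for either assertion; the missing ingredient in both is the cyclic/\'etale cover reduction to the Gorenstein (lci) situation.
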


For the proof of this theorem we follow the arguments of Koll\'ar \cite{Kol91b}, with some
simplifications due to our special situation. 
It is actually not necessary to work in the algebraic category, all arguments work in the analytic category as well. 

\begin{proof} 
{\em Step 1. If $K_X \cdot C<-1$, then $C$ is not very rigid.}
Let $\hat X$ be the formal completion of $X$  along $C$. Choose $m$ minimal such that $mK_X$ is Cartier. 
Since $-mK_X|_C$  is ample, the restriction $-mK_X|_{X_k}$ to the $k$-th infinitesimal neighborhood $X_k$ of $C$ in $X$ 
is ample \cite[Prop.4.2]{Har70}.
Thus there are projective schemes $Z_k$ such that $(Z_k)_{\rm an} \simeq X_k$. 
Then the formal scheme arising as inverse limit $Z$ of the system $(Z_k)$ is the algebraization of $\hat X$. 

 
Following the notations of Koll\'ar \cite[Constr.1.2]{Kol91b}, choose $d \in \Z$ with $0 \leq d < m$ such that 
$$ d \equiv -mK_X \cdot C \ {\rm mod}(m). $$ 
Choose a general Cartier divisor $D$ on $\hat X$ such that $D$ meets $C$ in a single point  $p_0$ transversally. By the general choice of $D$ this
will be a smooth point of $\hat X.$ Therefore the pull-back
$$ 
\nu^* \mathcal O_{\hat X}(-dD-mK_{\hat X})
$$ 
to the normalisation $\holom{\nu}{\tilde C}{C}$ 
is divisible by $m$ in ${\rm Pic}(\tilde C).$ 
We claim that 
$$ \mathcal O_{\hat X}(-dD-mK_{\hat X})$$
is divisible by $m$ in ${\rm Pic}(\hat X)$. To see this observe that for each $k \in \N$ the kernel 
of the restriction 
$$ \mathrm {Pic}(X_{k+1}) \to \mathrm{Pic}(X_k)$$
is an extension by an additive and a multiplicative group 
(use the exponential sequence for $X_k$, cp. \cite[1.2.1]{Kol91b} and \cite[no.232,Prop.6.5]{Gro62}).  
These groups are divisible, so $$ \mathcal O_{X_k}(-dD-mK_{\hat X})$$
is divisible by $m$ in ${\rm Pic}(X_k)$, hence $ \mathcal O_{\hat X}(-dD-mK_{\hat X})$ is divisible by $m$ in ${\rm Pic}(\hat X)$ \cite[II, Ex.9.6]{Har77}.
\\
Write
$$ \mathcal O_{\hat X}(-dD-mK_{\hat X}) \simeq N^{\otimes m}$$
with a line bundle $N$ on $\hat X$. 
We obtain a section $$s \in H^0(\hat X, (N^*)^{\otimes m} \otimes \mathcal O_{\hat X}(-mK_{\hat X}))$$
via $$N^{\otimes m} \otimes \mathcal O_{\hat X}(mK_{\hat X}) \simeq \mathcal O_{\hat X}(-dD)  \to \mathcal O_{\hat X}.$$ 
Let $$ p: \overline X \to \hat X$$
be the normalisation of $\hat X[s^{1 \over m}].$ By construction $p$ ramifies exactly over $D$ and the non-Gorenstein points of $\hat X$. 
Moreover $\overline X$ is Gorenstein and has at most terminal singularities by \cite[5.21(4)]{KM98}. 
We introduce the number
$$ e := {m \over {{\rm gcd}(d,m)}}.$$
Then $p$ has ramification order $e$ over $D$ yielding the equation
\begin{equation} \label{equationstar}
K_{\overline X} = p^*(K_{\hat X} + (1 - {1 \over {e}})D). 
\end{equation} 
Let $\overline C  \subset p^{-1}(C)$ be an irreducible component (with reduced structure). 
We claim that if $K_{\hat X} \cdot C < -1,$ then $K_{\hat X} \cdot \overline C \leq -2$. 
Assuming this for the time being, let us see how to conclude:
since
$$
K_{\hat X} \cdot C = K_X \cdot C < -1,
$$ 
we know by the claim that $K_{\overline X} \cdot \overline C \leq -2$.
But now $\overline X$ is a threefold with Gorenstein terminal singularities, so it has at most lci singularities. Therefore $\overline C$ 
deforms in $\overline X$ by \cite[Thm.1.15, Rem.1.17] {Kol96}. Hence some multiple of $C$ deforms in $\hat X$.
 
{\em Proof of the claim.}
Since $K_{\hat X} \cdot C$ is an integer multiple of ${{1} \over {e}}$,  our assumption implies
$K_{\hat X} \cdot C \leq -1 - {{1} \over {e}}$. Since $D \cdot C = 1$ we obtain
$$
K_{\hat X} \cdot C + (1 - {1 \over {e}}) D \cdot C \leq - \frac{2}{e}
$$
The order of ramification for every point $p_0$ over $D$ is equal to $e$, so we have $\deg(\overline C/C) \geq e$.
Thus the ramification formula \eqref{equationstar} and the preceding inequality gives
$$ 
K_{\overline X} \cdot \overline C = \deg (\overline C/C) \left( K_{\hat X} \cdot C + (1 - {1 \over {e}}) D \cdot C \right)  \leq -2.
$$
This proves the claim. 

{\em Step 2. Very rigid curves $K_X$-negative curves are rational.} 
If $C$ is irrational and $K_X \cdot C<0$, then there are finite \'etale covers
$$ h: C' \to C$$ 
of arbitrary large degree. Choose an open neighborhood $U$ of $C$ in $X$ such that $C$ is a deformation retract of $U$. Then 
$\pi_1(U) \simeq \pi_1(C)$ and therefore 
we obtain a finite \'etale cover  $g: U' \to U$ such that $C' \subset U'$ and $h = g|_{C'}$. Hence $K_{U'} \cdot C'$ gets arbitrarily negative, in 
particular smaller than $-1$. By Step 1 the curve $C'$ is not very rigid, hence $C$ is not very rigid.
\end{proof} 

\section{Bend and break} \label{sectionBB}

For the convenience of the reader we recall two results from the deformation theory of curves. 

\begin{theorem} \label{theoremdeformationssurface} \cite[Thm.1.15]{Kol96}
Let $S$ be a smooth complex surface, and let $C \subset S$ be a curve. Then we have
$$
\dim_C \Chow{S} \geq -K_S \cdot C + \chi(\sO_{\tilde C}),
$$ 
where $\tilde C \rightarrow C$ is the normalisation.
\end{theorem}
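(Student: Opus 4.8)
The plan is to produce a lower bound for $\dim_C\Chow{S}$ from the deformation theory of $C$ regarded as a divisor on the smooth surface $S$. Since $S$ is smooth and $C$ is a compact reduced curve, $C$ is an effective Cartier divisor and its normal sheaf $N_{C/S}=\sO_S(C)\otimes\sO_C$ is a line bundle on $C$. Any flat deformation of $C$ as a Cartier divisor yields a deformation of the associated $1$-cycle, and since a reduced divisor is recovered from its cycle this produces a family in $\Chow{S}$ through $[C]$ of the same dimension. I would then invoke the analytic deformation theory of divisors: the local model of the parameter space is cut out by at most $h^1(C,N_{C/S})$ equations inside a smooth germ of dimension $h^0(C,N_{C/S})$, these two groups being finite-dimensional because $C$ is compact. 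This yields
$$
\dim_C\Chow{S}\ \ge\ h^0(C,N_{C/S})-h^1(C,N_{C/S})\ =\ \chi(C,N_{C/S}).
$$

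Next I would evaluate $\chi(C,N_{C/S})$ numerically. Riemann--Roch on the (possibly singular) curve $C$ gives $\chi(C,N_{C/S})=(C\cdot C)_S+\chi(\sO_C)$, and the adjunction formula on the smooth surface $S$ rewrites the self-intersection $(C\cdot C)_S$ in terms of $K_S\cdot C$ and the arithmetic genus of $C$. Feeding this in, and using $\chi(\sO_C)\le\chi(\sO_{\tilde C})$ coming from the conductor sequence $0\to\sO_C\to\nu_*\sO_{\tilde C}\to\sF\to0$ for the normalisation $\nu:\tilde C\to C$ (so that replacing the arithmetic genus of $C$ by the geometric genus of $\tilde C$ only relaxes the estimate in the favourable direction), I obtain the stated bound expressed through $-K_S\cdot C$ and $\chi(\sO_{\tilde C})$. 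This last part is routine Riemann--Roch and adjunction bookkeeping and I would not expand it.

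The main obstacle is the first step rather than the numerics: one must justify the inequality $\dim_C\Chow{S}\ge\chi(C,N_{C/S})$ entirely within the analytic category, where the Hilbert-scheme machinery of the projective case is unavailable. The two delicate points are (i) the unobstructedness estimate bounding the local dimension of the parameter space of divisors below by $h^0-h^1$ of the normal bundle, and (ii) the comparison between this space and the Barlet cycle space near the reduced divisor $C$, so that deformations of $C$ as a subscheme genuinely produce a positive-dimensional family of cycles through $[C]$. A singular $C$ requires only the normalisation bookkeeping above, which weakens $\chi(\sO_C)$ to $\chi(\sO_{\tilde C})$ and therefore costs nothing in the final inequality.
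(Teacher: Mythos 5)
The paper offers no argument for this statement --- it is quoted verbatim from [Kol96, Thm.1.15] --- so the only thing to check is whether your route actually lands on the asserted inequality. It does not, and the step you decline to expand (``routine Riemann--Roch and adjunction bookkeeping'') is exactly where the problem sits. Your chain gives $\dim_C\Chow{S}\ge\chi(C,N_{C/S})=C^2+\chi(\sO_C)$, and adjunction on the smooth surface $S$ gives $C^2=-K_S\cdot C-2\chi(\sO_C)$, hence
$$
\dim_C\Chow{S}\ \ge\ -K_S\cdot C-\chi(\sO_C)\ \ge\ -K_S\cdot C-\chi(\sO_{\tilde C}),
$$
the last inequality because $\chi(\sO_C)=\chi(\sO_{\tilde C})-\mathrm{length}(\nu_*\sO_{\tilde C}/\sO_C)\le\chi(\sO_{\tilde C})$. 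This is the bound with a \emph{minus} sign in front of $\chi(\sO_{\tilde C})$, not the plus sign in the statement, and the two are not interchangeable: for a $(-1)$-curve $E$ one has $-K_S\cdot E+\chi(\sO_{\tilde E})=2$ while $E$ is rigid, and for a line in $\PP^2$ the printed bound would force a $2$-dimensional family to have dimension at least $4$. So the statement as printed is false; it is a sign misprint for Koll\'ar's actual bound $-K_S\cdot C-\chi(\sO_{\tilde C})$, which is also the version the paper needs later (in Lemma \ref{lemmabb} one wants $\dim_{\hat C}\Chow{S}\ge 1$ for an irrational curve with $K_{\hat S}\cdot\hat C<0$, which the printed version cannot deliver once $g(\tilde C)\ge 2$). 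Your assertion that the bookkeeping ``yields the stated bound'' is therefore an error, even though the computation, carried out honestly, proves the correct theorem.

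Apart from this, the method is sound and genuinely different from the cited source: Koll\'ar's general bound (of which Theorem \ref{theoremdeformations} is the three-dimensional case) carries the dimension-dependent coefficient $\dim X-3$ in front of $\chi(\sO_{\tilde C})$ and comes from deforming the normalisation map $\tilde C\to X$, whereas you deform $C$ as a Cartier divisor and obtain the a priori sharper estimate $-K_S\cdot C-\chi(\sO_C)$ directly. The two analytic points you isolate --- the estimate $\dim\ge h^0(N_{C/S})-h^1(N_{C/S})$ for the Douady space at a locally complete intersection point, and the identification of effective divisors on a smooth surface with their associated cycles in the Barlet space --- are both standard and unproblematic, and $C$ is compact by the paper's conventions so the cohomology groups are finite-dimensional. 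The fix is simply to restate the theorem with the correct sign before using it.
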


Recall that  terminal Gorenstein threefold singularities are hypersurfaces singularities \cite{Rei80}, \cite[Cor.5.38]{KM98}, so we have:  

\begin{theorem} \label{theoremdeformations} \cite[Thm.1.15]{Kol96}
Let $X$ be a normal $\Q$-factorial complex threefold with at most terminal {\em Gorenstein}  singularities. Let $C \subset X$ be a curve. Then we have
$$
\dim_C \Chow{X} \geq -K_X \cdot C.
$$ 
\end{theorem}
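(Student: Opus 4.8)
The plan is to recognise the statement as Kollár's deformation-theoretic dimension estimate \cite[Thm.1.15]{Kol96}, applied to our threefold, and to isolate the single hypothesis that needs checking. That hypothesis is that $X$ be a local complete intersection, which is exactly what the sentence preceding the theorem supplies: terminal Gorenstein threefold singularities are hypersurface singularities by \cite{Rei80}, \cite[Cor.5.38]{KM98}, so $X$ is lci everywhere and these singularities are moreover isolated. Once this is in place, the deformations of $C$ inside $X$ are governed by the normal sheaf $N_{C/X} = \Homsheaf(\sI_C/\sI_C^2, \sO_C)$: at the point $[C]$ of the Douady space $\mathcal{H}(X)$ the Zariski tangent space is $H^0(C, N_{C/X})$ and the obstructions lie in $H^1(C, N_{C/X})$, whence
$$
\dim_{[C]} \mathcal{H}(X) \geq h^0(C, N_{C/X}) - h^1(C, N_{C/X}) = \chi(C, N_{C/X}).
$$

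The heart of the matter is then to evaluate $\chi(C, N_{C/X})$ and to see that the answer is precisely $-K_X \cdot C$, with no correction term (in contrast to the surface estimate in Theorem \ref{theoremdeformationssurface}). Where $C$ is cut out by a regular sequence, $N_{C/X}$ is locally free of rank two on the one-dimensional $C$, with determinant computed by adjunction, $\det N_{C/X} \cong \omega_C \otimes (\omega_X|_C)^{-1}$, so that $\deg N_{C/X} = \deg \omega_C - K_X \cdot C$. Combining Riemann-Roch on the curve, $\chi(C, N_{C/X}) = \deg N_{C/X} + 2\chi(\sO_C)$, with the Serre-duality identity $\deg \omega_C = -2\chi(\sO_C)$, the two copies of $\chi(\sO_C)$ cancel and one is left with
$$
\chi(C, N_{C/X}) = -K_X \cdot C.
$$
This cancellation is special to dimension three and explains why the bound is so clean here.

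Finally the estimate is transported from the Douady space to the cycle space: since $C$ is reduced and irreducible, a positive-dimensional flat family of subspaces through $[C]$ whose general member has support not contained in $C$ produces a positive-dimensional family of distinct cycles, via the natural morphism from the (normalised) Douady space to $\Chow{X}$ of \cite[Thm.6.3]{Kol96} already used in Remark \ref{remarkkollar}; hence $\dim_C \Chow{X} \geq -K_X \cdot C$. The main obstacle is that $C$ may pass through the isolated singular points of $X$, where $C$ need not itself be a local complete intersection in $X$, so that $N_{C/X}$ fails to be locally free and the clean adjunction and Riemann-Roch computation above must be replaced by a more careful analysis built on the lci structure of $X$ and a good relative obstruction theory. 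This is precisely the generality in which Kollár's theorem is established, and it is for this reason that I would invoke \cite[Thm.1.15]{Kol96} directly once the lci property is confirmed, the computation above serving to justify the numerical form of the bound.
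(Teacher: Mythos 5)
Your proposal is correct and matches the paper's treatment: the paper gives no independent proof, but simply observes (via \cite{Rei80} and \cite[Cor.5.38]{KM98}) that terminal Gorenstein threefold singularities are hypersurface, hence lci, singularities and then invokes \cite[Thm.1.15]{Kol96} directly. Your additional Riemann--Roch computation explaining why the bound is exactly $-K_X \cdot C$, and your remark on passing from the Douady space to the cycle space, are accurate supplements to the same argument.
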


\begin{definition} \label{definitionsplit}
Let $X$ be a complex space and $Z$ an effective $1$-cycle on $X$. A deformation family of $Z$ is a
family of cycles $(Z_t)_{t \in T}$ where $T$ is an irreducible component $T$ of the cycle space $\Chow{X}$ that contains the point corresponding to $Z$. 
We say that the family of deformations splits if there exists a $t_0 \in T$ such that the cycle 
$$
Z_{t_0} = \sum \alpha_l C_l
$$
has reducible support or $Z_{t_0} = \alpha_1 C_1$ with $\alpha_1 \geq 2$.
\end{definition}

The following elementary lemma will be used several times.

\begin{lemma} \label{lemmabasic}
Let $X$ be a  normal  $\Q$-factorial compact K\"ahler threefold with at most terminal singularities
such that $K_X$ is pseudoeffective. 
Let $C \subset X$ be a curve such that
$K_X \cdot C<0$ and $\dim_C \Chow{X}>0$.

Then there exists a unique surface $S_j$ from the divisorial Zariski decomposition \eqref{Bdecomposition}
such that $C$ and its deformations are contained in the surface $S_j$.
Moreover we have
\begin{equation} \label{canonicaldegree}
K_{S_j} \cdot C < K_X \cdot C.
\end{equation}
\end{lemma}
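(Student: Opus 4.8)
### Proof Plan for Lemma \ref{lemmabasic}

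The plan is to combine the hypothesis $\dim_C \Chow{X} > 0$ with the pseudoeffectivity of $N(K_X)$ to force the deformations of $C$ to sweep out a surface, and then to identify that surface with one of the $S_j$ by a positivity argument. First I would use the deformation hypothesis: since $\dim_C \Chow{X} > 0$, the curve $C$ genuinely moves, so its deformations $C_t$ cover either a surface or all of $X$. If they covered $X$, then $X$ would be uniruled (the $C_t$ being $K_X$-negative and moving), contradicting that $K_X$ is pseudoeffective by Brunella's theorem \cite{Bru06}; so let $S \subset X$ be the (irreducible) surface swept out by the family containing $C$. The key point is then to show $K_X|_S$ is not pseudoeffective, which by Lemma \ref{lemmasurfaces} will force $S = S_j$ for a unique $j$.

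To see that $K_X|_S$ is not pseudoeffective, I would exploit that $C$ and all its deformations $C_t$ lie in $S$ and satisfy $K_X \cdot C_t = K_X \cdot C < 0$ (the intersection number is a deformation invariant). Since the $C_t$ form a covering family of curves on $S$ with $K_X \cdot C_t < 0$, the class $K_X|_S$ is negative on a \emph{covering} family, and a covering family of curves with negative degree against a divisor class witnesses that the class is not pseudoeffective on $S$. Hence $K_X|_S$ is not pseudoeffective, and Lemma \ref{lemmasurfaces} applies to give $S = S_j$, projective and uniruled; uniqueness of $j$ follows because distinct $S_j$ are distinct prime divisors, so the single surface swept out by the family can equal only one of them.

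For the final inequality \eqref{canonicaldegree}, I would invoke the adjunction machinery of Section \ref{subsectionsurfaces}. Let $\holom{\pi}{\hat S}{S}$ be the composition of normalisation and minimal resolution, and let $\hat C \subset \hat S$ be the strict transform of $C$; since $C$ moves in $S$, we may assume $C \not\subset S_{\sing}$, so $\hat C$ is a well-defined irreducible curve. By inequality \eqref{inequalitycanonical} we have $K_{\hat S} \cdot \hat C \leq K_S \cdot C$. On the other hand, using the adjunction formula $K_S = (K_X + S)|_S$ together with $S \cdot C < 0$ — which holds because $C$ moves in $S$ yet $S$ restricted to the surface it defines has negative degree against its own moving curves (equivalently, $S|_S \cdot C = S \cdot C < 0$ since $S = S_j$ is contracted to a negative-definite configuration under the relevant positivity) — one obtains $K_S \cdot C = K_X \cdot C + S \cdot C < K_X \cdot C$, giving \eqref{canonicaldegree}.

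The main obstacle I anticipate is the justification that $S \cdot C < 0$. The naive expectation is that a moving curve on a divisor $S$ can be arranged to have $S \cdot C < 0$, but this requires care: $S \cdot C$ is the degree of the normal bundle $\sO_X(S)|_S$ along $C$, and one must argue from the Zariski-decomposition structure — specifically that $S = S_j$ appears with positive coefficient $\lambda_j$ in $K_X$ while $K_X|_S$ is \emph{not} pseudoeffective — that the self-intersection contribution is strictly negative. Concretely, from $K_X|_S = \sum_j \lambda_j (S_j \cap S) + N(K_X)|_S$ with $N(K_X)|_S$ pseudoeffective and $K_X|_S$ \emph{not} pseudoeffective, the only possible source of negativity is the term $\lambda_j (S_j \cap S) = \lambda_j\, S|_S$, forcing $S|_S \cdot C < 0$ for the moving curves $C$. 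Making this precise — that the negativity is concentrated in the self-intersection term and not cancelled by the other summands when paired with a covering family — is the delicate step, and I would handle it by pairing the whole decomposition with $C_t$ and using that every other term is non-negative on the covering family.
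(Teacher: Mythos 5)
Your proposal follows essentially the same route as the paper: let the deformations sweep out an irreducible surface $S$ (the locus cannot be all of $X$ because $K_X$ is pseudoeffective and hence non-negative on a covering family — no detour through uniruledness is needed), identify $S$ with some $S_j$ via Lemma \ref{lemmasurfaces}, and obtain $S_j \cdot C<0$ by pairing the divisorial Zariski decomposition with the moving curve and observing that every other term ($S_k\cdot C_t$ for $k\neq j$ and $N(K_X)|_{S_j}\cdot C_t$) is non-negative, after which adjunction gives \eqref{canonicaldegree}. The one point you treat too lightly is that the \emph{uniqueness} assertion means the surface is independent of the chosen deformation family, not merely that a fixed swept-out surface equals only one $S_j$ — but this follows immediately from the inequality $S_j\cdot C<0$ you establish, since any deformation of $C$ in any family then has negative intersection with $S_j$ and so must be contained in it.
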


\begin{proof}
Let $(C_t)_{t \in T}$ be a deformation family of $C$. Since $C$ is integral, a general deformation $C_t$ is
integral. Since $K_X \cdot C_t<0$ for all $t \in T$ we see that the locus
$$
S := \overline{\cup_{t \in T} C_t}
$$
has the property that $K_X|_S$ is not pseudoeffective. 
Since $T$ and the general $C_t$ are irreducible, the
graph of the family $\mathcal C \rightarrow T$ is irreducible.
Thus $S$ is irreducible, and since $K_X$ is pseudoeffective, it is a surface.
By Lemma \ref{lemmasurfaces} there exists a $j \in \{ 1, \ldots, r \}$ such that $S=S_j$.

In order to prove the uniqueness of the surface $S_j$ and the inequality \eqref{canonicaldegree} 
it is sufficient to show that $S_j \cdot C < 0$.
The deformation family $(C_t)_{t \in T}$ has no fixed component, in particular for every $k \in \{ 1, \ldots, r \}$ 
such that $k \neq j$ there exists a $t \in T$ general such that
$C_t \not\subset S_k$. 
Thus we have $ S_k \cdot C = S_k \cdot C_t \geq 0$ for every $k \neq j$. 
Moreover the restriction $N(K_X)|_{S_j}$ is pseudoeffective and the family $(C_t)_{t \in T}$ covers $S_j$, so 
$$
N(K_X) \cdot C = N(K_X)|_{S_j} \cdot C = N(K_X)|_{S_j} \cdot C_t \geq 0.
$$
Since
$$
0 > K_X \cdot C =\sum_{j=1}^r \lambda_j S_j \cdot C + N(K_X) \cdot C,
$$ 
this implies that $S_j \cdot C<0$. This shows also that {\em every} deformation family of $C$ lies in $S_j$.
Thus the surface $S_j$ does not depend on the choice of $T$.
\end{proof}

\begin{lemma} \label{lemmabb}
Let $\hat S$ be a smooth projective surface that is uniruled, and let
$\hat C \subset \hat S$ be an irreducible curve.

a) Suppose that $K_{\hat S} \cdot \hat C <0$.
Then there exists an effective 1-cycle $\sum \alpha_k C_k$ with coefficients in $\Q^+$ such that
$$
[\sum \alpha_k C_k] = [\hat C]
$$
and $C_1$ is a rational curve such that $K_{\hat S} \cdot C_1 <0$.

b) Suppose that $K_{\hat S} \cdot \hat C \leq -4$.
Then there exists an effective 1-cycle $\sum_{k=1}^m C_k$ with $m \geq 2$  
such that
$$
[\sum_{k=1}^m C_k] = [\hat C]
$$
such that $K_{\hat S} \cdot C_1 <0$ and $K_{\hat S} \cdot C_2 <0$.
\end{lemma}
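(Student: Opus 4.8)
The plan is to work on the smooth projective uniruled surface $\hat S$ and use the deformation theory of curves, Theorem \ref{theoremdeformationssurface}, together with the rational quotient structure of $\hat S$. For part a), the key point is that on a uniruled surface, $-K_{\hat S}$ being positive on $\hat C$ forces $\hat C$ to move in a positive-dimensional family. More precisely, by Theorem \ref{theoremdeformationssurface} we have
$$
\dim_{\hat C} \Chow{\hat S} \geq -K_{\hat S} \cdot \hat C + \chi(\sO_{\tilde C}),
$$
and since $\chi(\sO_{\tilde C}) = 1 - g(\tilde C)$, this bound is only immediately useful when $\hat C$ is rational or of low genus. So first I would pass to a suitable family: since $\hat S$ is a smooth projective uniruled surface, it carries a covering family of rational curves (the fibres of a ruling after possibly blowing down), and the idea is to degenerate $\hat C$ within $\Chow{\hat S}$ until it breaks into pieces at least one of which is a rational curve meeting $-K_{\hat S}$ positively.

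For part a) concretely, I would argue as follows. If $\hat C$ is already rational we are done with $\sum \alpha_k C_k = \hat C$. Otherwise, I would use the structure of $\hat S$: run a relative MMP or simply use that $\hat S$ is birational to a ruled surface over a curve $B$, with ruling $f \colon \hat S \to B$. Since $K_{\hat S} \cdot \hat C < 0$, the curve $\hat C$ cannot be a multisection on which $K_{\hat S}$ is nonnegative; intersecting the numerical relation coming from the ruling and using the negativity of $K_{\hat S} \cdot \hat C$, I would extract a fibre component, i.e.\ a rational curve $C_1$ (a ruling fibre or a component of a degenerate fibre) with $K_{\hat S} \cdot C_1 < 0$. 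Writing $[\hat C]$ as a positive rational combination of $[C_1]$ and the classes of the remaining effective curves (for instance by subtracting an appropriate multiple of the fibre class and using that the residual class is effective up to the movable/fibre decomposition) yields the desired expression $[\sum \alpha_k C_k] = [\hat C]$ with $C_1$ rational and $K_{\hat S}$-negative.

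For part b), the stronger hypothesis $K_{\hat S} \cdot \hat C \leq -4$ is meant to guarantee that after breaking we still have two separate $K_{\hat S}$-negative components. Here I expect to use bend-and-break in its genuine form: the deformation bound gives $\dim_{\hat C}\Chow{\hat S} \geq -K_{\hat S}\cdot \hat C + \chi(\sO_{\tilde C})$, and when $-K_{\hat S}\cdot \hat C$ is large (at least $4$), fixing the curve to pass through sufficiently many general points still leaves a positive-dimensional family, which by the classical bend-and-break lemma must degenerate so that $\hat C$ becomes reducible (or non-reduced). Applying part a) to the individual pieces, and using that the total intersection with $K_{\hat S}$ is at most $-4$, I would distribute the negativity so that at least two components $C_1, C_2$ satisfy $K_{\hat S}\cdot C_i < 0$; the numerical threshold $-4$ is exactly what is needed so that after splitting off one rational curve with $K_{\hat S}\cdot C_1$ bounded below (by $-1$ in the reduced case, analogous to Theorem \ref{theoremveryrigid}), a second negative component survives.

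The main obstacle I anticipate is controlling the genus and the combinatorics of the degeneration: Theorem \ref{theoremdeformationssurface} only gives a lower bound on the dimension of the Chow space, so to actually produce a \emph{splitting} (reducible or non-reduced member) I must genuinely invoke bend-and-break, which requires forcing the family through enough general points and then arguing that a high-degree family on a surface cannot consist entirely of irreducible members through those points. Equally delicate is ensuring that \emph{two} of the resulting components are $K_{\hat S}$-negative rather than just one; this is where the sharp constant $4$ enters, and I would verify it by bounding each rational $K_{\hat S}$-negative component's intersection with $K_{\hat S}$ from below (so a single component cannot absorb all of the $-4$) and counting. The uniruledness of $\hat S$ is used throughout to guarantee that the relevant moving curves are rational and that the ruling structure is available.
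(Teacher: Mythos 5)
Your proposal contains two genuine gaps, one in each part. For part a), you try to bypass the paper's induction on the degree $H\cdot\hat C$ by appealing directly to a ruling $f\colon\hat S\to B$ and ``extracting a fibre component''. But $\hat S$ is only \emph{birationally} ruled: on a non-minimal uniruled surface the cone $\NE{\hat S}$ is not generated by the fibre class and one other class, and there is no reason why $[\hat C]$ minus a multiple of the fibre class (or of any fibre component) should be represented by an \emph{effective} cycle. The lemma demands an actual effective $1$-cycle with coefficients in $\Q^+$ whose class is $[\hat C]$ and which contains a $K_{\hat S}$-negative rational component; producing it is precisely the content of the statement and cannot be read off from the birational ruling (nor from the cone theorem, since the $K_{\hat S}$-nonnegative part of a cone decomposition need not be effective). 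The paper resolves this by induction on $H\cdot\hat C$ combined with a dichotomy: either the positive-dimensional deformation family of $\hat C$ splits, in which case one passes to a component of strictly smaller $H$-degree and inducts, or it does not split, and only then does one show --- via a MMP to a Mori fibre space and \cite[Lemma 3.3]{Pet01} --- that $\hat S$ is itself a \emph{minimal} ruled surface, where $\NE{\hat S}$ is two-dimensional and the decomposition $[\hat C]=\lambda_1[F]+\lambda_2[B]$ of an interior class is legitimate. Without the splitting alternative and the induction your argument does not close.

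For part b) the role of the constant $-4$ in your sketch is wrong. You propose to ``distribute the negativity'' by bounding $K_{\hat S}\cdot C_1$ from below by $-1$ for the split-off rational component, in analogy with Theorem \ref{theoremveryrigid}; no such bound exists on a surface (a line in $\PP^2$ has $K_{\PP^2}\cdot\ell=-3$, and rational curves on ruled surfaces have arbitrarily negative canonical degree), so a single component can absorb all of the $-4$ and your counting fails. In the paper, the hypothesis $K_{\hat S}\cdot\hat C\le -4$ is used only to guarantee $\dim_{\hat C}\Chow{\hat S}\ge 2$, whence the family must split (again by the fixed-point argument through \cite[Lemma 3.3]{Pet01}, not by classical bend-and-break through many general points). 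One then orders the components by $K_{\hat S}$-degree; if only $C_1$ is negative, then $K_{\hat S}\cdot C_1\le K_{\hat S}\cdot\hat C\le -4$ while $H\cdot C_1<H\cdot\hat C$, and one concludes by the induction hypothesis for part b). The induction on the $H$-degree, which you omit entirely, is what makes both parts terminate.
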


\begin{proof}
The statements are trivial for $\hat S \simeq \PP^2$, so suppose that this is not the case.
Fix an ample line bundle $H$ on $\hat S$. We will argue by induction on the degree $d:=H \cdot \hat C$.
We start the induction with $d=0$ where both statements are trivial.
Let us now do the induction step.

{\em Proof of a)}
If $\hat C$ is a rational curve we are finished. 
If $\hat C$ is not rational we know by Theorem \ref{theoremdeformationssurface} that $\dim_{\hat C} \Chow{S} \geq 1$,
so we have a positive-dimensional family of deformations $(C_t)_{t \in T}$.

{\em 1st case. The family of deformations splits.} Thus there exists a cycle $\sum \beta_l C_l$
that is not integral such that $[\sum \beta_l C_l]=[\hat C]$. Up to renumbering we can suppose
that $K_{\hat S} \cdot C_1<0$.  Since $H \cdot C_1<H \cdot \hat C$ we can apply the induction
hypothesis to $C_1$ and conclude.

{\em 2nd case. The family of deformations does not split.}
We claim that in this case $\hat S$ is a ruled surface or $\PP^2$. 
Arguing by contradiction, let \holom{\sigma}{\hat S}{S_0} be a MMP, i.e., a sequence of blow-downs of $(-1)-$curves,  to some Mori fibre space $S_0$
which we can suppose to be a ruled surface\footnote{If $S_0$ is $\PP^2$ just take the same MMP but omit the
last blow-up.}. Let $E$ be the $\sigma$-exceptional locus, then
$\hat C \cdot E>0$ since the deformations of $\hat C$ cover $\hat S$ and do not split.
Thus the family $(\sigma(C_t))_{t \in T}$ is positive-dimensional, does not split and has a fixed point
$p \in \sigma(E)$. This contradicts \cite[Lemma 3.3]{Pet01}.

Since by our assumption $\hat S \not\simeq \PP^2$, this shows that $\hat S$ is a ruled surface. 
We claim that $[\hat C]$ lies in the interior of the Mori cone $\NE{\hat S}$: otherwise 
$\NE{\hat S}$ would be generated by  $[\hat C]$ and $[F]$ where $F$ is a fibre of the ruling\footnote{Note
that $[\hat C] \not\in \R^+ [F]$ since $\hat C$ is not rational}.
Since $K_{\hat S} \cdot \hat C<0$ we obtain that $\hat S$ is a del Pezzo surface, hence $\mathbb F_1$ or $\PP^1 \times \PP^1$.
In both cases every curve in the extremal ray $\R^+ [\hat C]$ is rational, a contradiction.

Thus we can choose a curve $B \subset \hat S$ such that $[C]$ lies
in the interior of the cone generated by $[B]$ and $[F]$, where $F$ is a fibre of the ruling. Thus we have
$$
[C] = \lambda_1 [F] + \lambda_2 [B],
$$
with $\lambda_i \in \Q^+$.

{\em Proof of b).} 
By Theorem \ref{theoremdeformationssurface} we have $\dim_{\hat C} \Chow{S} \geq 2$,
so we have a two-dimensional family of deformations $(C_t)_{t \in T}$.
Note that the family of deformations splits: arguing by contradiction, let \holom{\sigma}{\hat S}{S_0} be a MMP to some Mori fibre space $S_0$
which we can suppose to be a ruled surface. Since $T$ has dimension at least two, the subfamily $T_p$
parametrising the deformations through a general point $p \in \hat S$ has dimension at least one.
Thus $(\sigma(C_t))_{t \in T_p}$ is positive-dimensional, does not split and has a fixed point
$p \in S_0$. This contradicts \cite[Lemma 3.3]{Pet01}.

Since the deformations split, there exists a cycle $\sum_{l=1}^{m'} C_l$
with $m' \geq 2$ such that $[\sum C_l]=[\hat C]$. Up to renumbering we can suppose
$$
K_{\hat S} \cdot C_1 \leq K_{\hat S} \cdot C_2 \leq \ldots \leq K_{\hat S} \cdot C_{m'}.
$$
If $K_{\hat S} \cdot C_2<0$ we are finished. If not we have
$K_{\hat S} \cdot C_1 \leq K_{\hat S} \cdot \hat C$.
Since $H \cdot C_1<H \cdot \hat C$ we can apply the induction
hypothesis to $C_1$ to conclude.
\end{proof}

\begin{lemma} \label{lemmadeform}
Let $X$ be a  normal  $\Q$-factorial compact K\"ahler threefold with at most terminal singularities
such that $K_X$ is pseudoeffective. Let $S_1, \ldots, S_r$ be the surfaces appearing in the divisorial Zariski decomposition \eqref{Bdecomposition}.
Set
$$
b := \max \{ 1, -K_X \cdot Z \ | \ Z \ \mbox{a curve s.t.} \ Z \subset S_{j, \sing} \ \mbox{or} \ Z \subset S_{j} \cap S_{j'}  \}.
$$
If $C \subset X$ is a curve
such that
$$
-K_X \cdot C > b,
$$
then we have $\dim_C \Chow{X}>0$.
\end{lemma}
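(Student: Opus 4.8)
The plan is to produce the deformation of $C$ by locating a surface that contains it, resolving that surface, and then showing that the strict transform of $C$ already moves there. First I would record what the hypothesis buys us. Since $b \geq 1$ we have $K_X \cdot C < -1 < 0$; and since $-K_X \cdot C$ strictly exceeds $-K_X \cdot Z$ for every curve $Z \subset S_{j,\sing}$ and every $Z \subset S_j \cap S_{j'}$, the curve $C$ is contained neither in any $S_{j,\sing}$ nor in any pairwise intersection $S_j \cap S_{j'}$. Because $K_X \cdot C < -1$, Theorem \ref{theoremveryrigid} shows that $C$ cannot be very rigid, so $\dim_{mC}\Chow{X} > 0$ for some $m > 0$.

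Next I would identify the surface. Arguing exactly as in Lemma \ref{lemmabasic}, the positive-dimensional family of cycles through $mC$ sweeps out an irreducible surface $S$ on which $K_X|_S$ fails to be pseudoeffective; by Lemma \ref{lemmasurfaces} this $S$ is one of the $S_j$, is projective and uniruled, and $C \subset S_j$. The same intersection bookkeeping as in Lemma \ref{lemmabasic} --- $S_k \cdot C \geq 0$ for $k \neq j$ because the family has no fixed component, and $N(K_X)\cdot C \geq 0$ because $N(K_X)|_{S_j}$ is pseudoeffective while the family covers $S_j$ --- forces $S_j \cdot C < 0$ from the decomposition \eqref{Bdecomposition}. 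Consequently $K_{S_j}\cdot C = (K_X + S_j)\cdot C < K_X \cdot C < -1$.

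Now I would pass to the resolution. Let $\pi \colon \hat S_j \to S_j$ be the composition of normalisation and minimal resolution; since $C \not\subset S_{j,\sing}$ the strict transform $\hat C$ is well defined and not $\pi$-exceptional, and \eqref{inequalitycanonical} gives $K_{\hat S_j}\cdot \hat C \leq K_{S_j}\cdot C$. As $\hat S_j$ is smooth this is an integer, so $K_{\hat S_j}\cdot \hat C \leq -2$. It then suffices to show that $\hat C$ moves in $\hat S_j$: a positive-dimensional family of deformations of $\hat C$, being non-exceptional, pushes forward through $\pi$ and the inclusion $S_j \hookrightarrow X$ to a positive-dimensional family through $[C]$ in $\Chow{X}$.

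The main obstacle is that the naive dimension count of Theorem \ref{theoremdeformationssurface} carries a genus correction $\chi(\sO_{\tilde C}) = 1 - g$ that can be very negative, so I expect the heart of the argument to be controlling the genus through the surface geometry. The key observation is that $p_a(\hat C) \geq 0$ together with $K_{\hat S_j}\cdot \hat C \leq -2$ forces, via adjunction $2 p_a(\hat C) - 2 = \hat C^2 + K_{\hat S_j}\cdot \hat C$, the inequality $\hat C^2 \geq 0$. Since $\hat S_j$ is uniruled, $K_{\hat S_j}$ is not pseudoeffective, so $h^2(\sO_{\hat S_j}(\hat C)) = h^0(\sO_{\hat S_j}(K_{\hat S_j} - \hat C)) = 0$, and Riemann--Roch bounds $h^0(\sO_{\hat S_j}(\hat C))$ below by $\chi(\sO_{\hat S_j}) + \tfrac12(\hat C^2 - K_{\hat S_j}\cdot \hat C)$. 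When $\hat C$ dominates the base of the ruling, the estimate $p_a(\hat C) \geq g(\hat C) \geq q$, with $q$ the genus of the base, makes $\hat C^2$ grow fast enough to absorb the term $\chi(\sO_{\hat S_j}) = 1 - q$ and yield $h^0 \geq 2$; when $\hat C$ lies in a fibre it is rational and moves in the ruling. Either way $\dim_{\hat C}\Chow{\hat S_j} > 0$, and pushing forward gives $\dim_C \Chow{X} > 0$.
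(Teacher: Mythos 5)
Up to the reduction to the surface, your argument coincides with the paper's: non-rigidity via Theorem \ref{theoremveryrigid}, a minimal $m$ with $\dim_{mC}\Chow{X}>0$, Lemma \ref{lemmabasic} used to locate the surface $S_j$, the intersection bookkeeping forcing $S_j\cdot C<0$, and \eqref{inequalitycanonical} giving $K_{\hat S_j}\cdot \hat C\leq K_{S_j}\cdot C<-b\leq -1$. (One caveat: Lemma \ref{lemmabasic} must be applied to a component $C_{t,1}$ of a \emph{general member} of the family through $mC$ --- not to $C$ itself, whose deformations are exactly what is at stake; the paper takes $m$ minimal so the family has no fixed component and argues this way, and your ``arguing exactly as in Lemma \ref{lemmabasic}'' should be read accordingly.) The divergence is in the final step. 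The paper concludes in one line from Theorem \ref{theoremdeformationssurface}; you noticed that the bound as printed, $-K_{\hat S_j}\cdot\hat C+\chi(\sO_{\tilde C})$, degenerates for large genus, and you replaced it by adjunction (forcing $\hat C^2\geq 0$), vanishing of $h^2$, Riemann--Roch and a case split over the ruling. The obstacle you circumvented is in fact a sign slip in the paper's statement of Theorem \ref{theoremdeformationssurface}: the correct estimate is $\dim_{\hat C}\Chow{\hat S_j}\geq -K_{\hat S_j}\cdot\hat C-\chi(\sO_{\tilde C})=-K_{\hat S_j}\cdot\hat C+g(\tilde C)-1$ (elementarily, $\chi(N_{\hat C/\hat S_j})=-K_{\hat S_j}\cdot\hat C+p_a(\hat C)-1$; a smooth conic in $\PP^2$, where the space of degree-two cycles is $5$-dimensional while the printed bound gives $7$, shows the version with $+\chi$ cannot be right). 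With the correct sign the genus term only helps, and $-K_{\hat S_j}\cdot\hat C\geq 2$ immediately yields a positive lower bound --- this is what the paper means by ``since $b\geq 1$ we obtain\dots''. Your longer route is sound and essentially amounts to reproving the correct form of the cited theorem in the case at hand.

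One point in your workaround does need repair. In the case where $\hat C$ does not dominate the base of the ruling you assert that $\hat C$ ``is rational and moves in the ruling'', but an irreducible component of a special fibre need not move. What saves you is the inequality $\hat C^2\geq 0$ you already established: by Zariski's lemma the intersection form on components of a fibre is negative semidefinite with kernel spanned by the whole fibre, so $\hat C$ must be (the reduction of) an entire fibre and therefore moves. Alternatively, for rational $\hat C$ the corrected deformation bound reads $-K_{\hat S_j}\cdot\hat C-1\geq 1>0$ directly, with no case distinction needed. With that patch the proposal is complete.
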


\begin{proof}
Since $b \geq 1$, the curve $C$ is not very rigid
by Theorem \ref{theoremveryrigid}.
Let $m \in \N$ be minimal such that $\dim_{mC} \Chow{X} > 0$, and let $(C_t)_{t \in T}$ be a family of deformations.
Let $\mathcal C \rightarrow T$ be the graph of the family: up to replacing $\mathcal C$ by an 
irreducible component $\mathcal C' \subset \mathcal C$ that contains $C$ we can suppose that
$\mathcal C$ is irreducible. In particular, the locus covered by the effective 1-cycles $C_t$ is irreducible.
The family $(C_t)_{t \in T}$ has no fixed component, i.e. there does not exist a curve $B$ that is contained
in the support of every $C_t$. Indeed, since $mC$ is a member of the family, we would have $B=C$, but then
$\dim_{(m-1)C} \Chow{X}>0$, contradicting the minimality of $m$. Thus if $C_t$ is general and
$$
C_t = \sum_l \alpha_l C_{t,l}
$$
its decomposition, we have $\dim_{C_{t,l}} \Chow{X}>0$ for all $l$. Up to renumbering we can suppose that
$K_X \cdot C_{t,1}<0$. By Lemma \ref{lemmabasic} applied to $C_{t,1}$ there exists a unique surface 
$S_j$ from the decomposition \eqref{Bdecomposition} such that 
$$
\overline{\bigcup_{t \in T \ \mbox{general}} C_{t,1}} = S_j.
$$
Since the locus covered by the family $(C_t)_{t \in T}$ is irreducible, we see that $C \subset S_j$.

By the definition of $b$, we have $C \not\subset S_l$ for every $l \neq j$, 
thus 
$$
S_l \cdot C \geq 0 \qquad \forall \ l \neq j.
$$
The restriction $N(K_X)|_{S_j}$ is pseudoeffective, and for $t \in T$ general all the curves $C_{t,l}$ are movable
in $S_j$, so we get
$$
N(K_X) \cdot C = N(K_X)|_{S_j} \cdot C = N(K_X)|_{S_j}  \cdot C_t = \sum_l \alpha_l (N(K_X)|_{S_j}  \cdot C_{t,l}) \geq 0.
$$
Since $K_X \cdot C<0$, the equality \eqref{Bdecomposition} now implies $S_j \cdot C < 0$. Thus we have
$$
K_{S_j} \cdot C < K_X \cdot C < -b.
$$
By definition of $b$, the curve $C$ is not contained in the singular locus of $S_j$.
Let \holom{\pi_j}{\hat{S}_j}{S_j} be the composition of normalisation and minimal resolution (cf. Subsection \ref{subsectionsurfaces}). 
The strict transform
$\hat C$ of $C$ is well-defined and by \eqref{inequalitycanonical} we have  
$$
K_{\hat S_j} \cdot \hat C \leq K_{S_j} \cdot C < -b.
$$
Since $b \geq 1$ we obtain by Theorem \ref{theoremdeformationssurface} that
$$
\dim_{\hat C} \Chow{\hat S}>0,
$$
so $\hat C$ deforms. Thus its push-forward $\pi_* \hat C = C$ deforms.
\end{proof}

\begin{corollary} \label{corollarybreak}
Let $X$ be a  normal  $\Q$-factorial compact K\"ahler threefold with at most terminal singularities
such that $K_X$ is pseudoeffective. Let $b$ be the constant from Lemma \ref{lemmadeform} and set
$$
d := \max \{ 3, b \}.
$$
If $C \subset X$ is a curve
such that $-K_X \cdot C > d$,
then we have
$$
[C] = [C_1] + [C_2]
$$
with $C_1$ and $C_2$ effective 1-cycles (with integer coefficients) on $X$.
\end{corollary}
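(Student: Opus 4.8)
The plan is to transfer the problem to the surface $S_j$ that carries the deformations of $C$, apply the surface-level bend-and-break of Lemma \ref{lemmabb}b) on a resolution, and then push the resulting decomposition forward to $X$. First I would note that since $-K_X \cdot C > d \geq b$, Lemma \ref{lemmadeform} gives $\dim_C \Chow{X} > 0$, so Lemma \ref{lemmabasic} applies: there is a unique surface $S_j$ from the divisorial Zariski decomposition \eqref{Bdecomposition} containing $C$ and all of its deformations, and moreover $K_{S_j} \cdot C < K_X \cdot C$ by \eqref{canonicaldegree}. By Lemma \ref{lemmasurfaces} this $S_j$ is projective and uniruled, hence so is its smooth model; writing \holom{\pi}{\hat S_j}{S_j} for the composition of normalisation and minimal resolution, $\hat S_j$ is a smooth projective uniruled surface. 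Because $-K_X \cdot C > b \geq -K_X \cdot Z$ for every curve $Z \subset S_{j,\sing}$, the curve $C$ is not contained in $S_{j,\sing}$, so the strict transform $\hat C \subset \hat S_j$ is well-defined and \eqref{inequalitycanonical} gives $K_{\hat S_j} \cdot \hat C \leq K_{S_j} \cdot C$.

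Next I would produce the numerical bound needed for part b) of Lemma \ref{lemmabb}. Chaining the inequalities,
$$K_{\hat S_j} \cdot \hat C \leq K_{S_j} \cdot C < K_X \cdot C < -d \leq -3.$$
Since $\hat S_j$ is smooth, $K_{\hat S_j}$ is a genuine (Cartier) divisor and $K_{\hat S_j} \cdot \hat C \in \Z$, so the strict inequality $K_{\hat S_j} \cdot \hat C < -3$ upgrades to $K_{\hat S_j} \cdot \hat C \leq -4$. Lemma \ref{lemmabb}b) then supplies an effective $1$-cycle $\sum_{k=1}^m C_k$ on $\hat S_j$ with $m \geq 2$, satisfying $[\sum_{k=1}^m C_k] = [\hat C]$ in $N_1(\hat S_j)$ and with $K_{\hat S_j} \cdot C_1 < 0$ and $K_{\hat S_j} \cdot C_2 < 0$.

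Finally I would push this decomposition forward along $f := \iota \circ \pi \colon \hat S_j \to X$, where $\iota \colon S_j \hookrightarrow X$ is the inclusion. The direct image of currents gives a well-defined map $f_* \colon N_1(\hat S_j) \to N_1(X)$ that respects numerical equivalence and sends the class of an effective cycle to the class of its image cycle; since $f$ restricts to a birational map $\hat C \to C$ we have $f_*[\hat C] = [C]$, whence $[C] = \sum_{k=1}^m f_*[C_k]$. The point I expect to be the main obstacle — and precisely the reason part b) rather than part a) of Lemma \ref{lemmabb} is needed — is ensuring that this decomposition does not collapse under $f_*$: a priori a component $C_k$ could be $\pi$-exceptional and push forward to zero, leaving only one nonzero piece. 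This is where the minimality of the resolution enters: there are no $(-1)$-curves among the $\pi$-exceptional curves, so $K_{\hat S_j}$ is nef relative to $\pi$ and $K_{\hat S_j} \cdot E \geq 0$ for every $\pi$-exceptional curve $E$. Consequently the two components $C_1, C_2$ with $K_{\hat S_j} \cdot C_i < 0$ cannot be exceptional, so $f_*(C_1)$ and $f_*(C_2)$ are nonzero effective curves on $X$. Setting $C_1' := f_*(C_1)$ and $C_2' := \sum_{k=2}^m f_*(C_k)$ then yields two nonzero effective $1$-cycles with integer coefficients on $X$ with $[C] = [C_1'] + [C_2']$, as required.
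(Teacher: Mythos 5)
Your proposal is correct and follows essentially the same route as the paper's own proof: reduce to the surface $S_j$ via Lemmas \ref{lemmadeform} and \ref{lemmabasic}, pass to the minimal resolution of the normalisation to get $K_{\hat S_j}\cdot \hat C\le -4$, apply Lemma \ref{lemmabb},b), and use the $\pi_j$-nefness of $K_{\hat S_j}$ to see that the two $K$-negative components survive the push-forward. You merely make explicit two points the paper leaves implicit, namely the integrality upgrade from $<-3$ to $\le -4$ and the reason the minimal resolution has relatively nef canonical class.
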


\begin{proof}
Since $d \geq b$ 
we know by Lemma  \ref{lemmadeform} that $\dim_{C} \Chow{X} > 0$. 
Let $(C_t)_{t \in T}$ be a family of deformations.
By Lemma \ref{lemmabasic},
there exists a unique surface $S_j$ from the divisorial Zariski decomposition \eqref{Bdecomposition}
such that the $C_t$ are contained in the surface $S_j$.
Moreover we have
$$
K_{S_j} \cdot C < K_X \cdot C < d.
$$
By the definition of the constant $b$ in Lemma \ref{lemmadeform}, we have
$C \not\subset S_{j, \sing}$.

Let \holom{\pi_j}{\hat{S}_j}{S_j} be the composition of normalisation and minimal resolution (cf. Subsection \ref{subsectionsurfaces}). 
The strict transform
$\hat C$ of $C$ is well-defined and by \eqref{inequalitycanonical} we have  
$$
K_{\hat S_j} \cdot \hat C \leq K_{S_j} \cdot C < -3.
$$
Thus by Lemma \ref{lemmabb},b) there exists an effective 1-cycle $\sum_{k=1}^m C_k$ with $m \geq 2$  
such that
$$
[\sum_{k=1}^m C_k] = [\hat C]
$$
such that $K_{\hat S_j} \cdot C_1 <0$ and $K_{\hat S_j} \cdot C_2 <0$. 
Since $K_{\hat S_j}$ is $\pi_j$-nef, we have $(\pi_j)_* C_1 \neq 0$ and $(\pi_j)_* C_2 \neq 0$. 
Thus we obtain 
$$
[C] =  (\pi_j)_* [\hat C] = \sum_{k=1}^m (\pi_j)_* [C_k]
$$ 
and the first two terms of this sum are not zero.
\end{proof}

\begin{lemma} \label{lemmarationalrepresentative}
Let $X$ be a  normal  $\Q$-factorial compact K\"ahler threefold with at most terminal singularities
such that $K_X$ is pseudoeffective. 
Let $\R^+ [\Gamma_i]$ be a $K_X$-negative extremal ray in $\NEX$ where
$\Gamma_i$ is a curve that is not very rigid. Then the following holds:
\begin{enumerate}
\item There exists a curve $C \subset X$ such that $[C] \in \R^+ [\Gamma_i]$ and $\dim_C \Chow{X}>0$.
\item There exists a rational curve $C \subset X$ such that $[C] \in \R^+ [\Gamma_i]$.
\end{enumerate}
\end{lemma}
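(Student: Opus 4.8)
The plan is to establish (1) by a minimal-deformation argument in the spirit of Lemma~\ref{lemmadeform}, and then to derive (2) by restricting to the surface supplied by Lemma~\ref{lemmabasic} and running the surface bend-and-break of Lemma~\ref{lemmabb}. Write $\Gamma := \Gamma_i$. Since $\Gamma$ is not very rigid, I would choose $m \in \N$ minimal such that $\dim_{m\Gamma} \Chow{X} > 0$ and fix a deformation family $(Z_t)_{t \in T}$ of $m\Gamma$. Exactly as in Lemma~\ref{lemmadeform}, minimality of $m$ forces the family to have no fixed component: such a component would have to be $\Gamma$ itself (as $m\Gamma$ is a member), and subtracting it would give a positive-dimensional family through $(m-1)\Gamma$, contradicting minimality. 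Consequently, writing a general member as $Z_t = \sum_l \alpha_l C_{t,l}$, each irreducible component $C_{t,l}$ moves, i.e. $\dim_{C_{t,l}} \Chow{X} > 0$ (a non-moving component would be a fixed component of the family). The numerical class is constant along $T$, so $m[\Gamma] = [Z_t] = \sum_l \alpha_l [C_{t,l}]$ with all $\alpha_l > 0$ and all $[C_{t,l}] \in \NEX$; since $\R^+[\Gamma]$ is an extremal ray, each $[C_{t,l}]$ lies in $\R^+[\Gamma]$. As $m[\Gamma] \neq 0$, some component $C := C_{t,l}$ has $[C] \neq 0$, and this $C$ satisfies $[C] \in \R^+[\Gamma]$ and $\dim_C \Chow{X} > 0$, proving (1).

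For (2), I start from the curve $C$ just produced. Because $[C] \in \R^+[\Gamma]$ with $[C] \neq 0$ and the ray is $K_X$-negative, we have $K_X \cdot C < 0$, while $\dim_C \Chow{X} > 0$. Hence Lemma~\ref{lemmabasic} applies and gives a surface $S_j$ from the divisorial Zariski decomposition \eqref{Bdecomposition} containing $C$ and its deformations, with $K_{S_j} \cdot C < K_X \cdot C < 0$; by Lemma~\ref{lemmasurfaces} the model $\pi_j : \hat S_j \to S_j$ (normalisation followed by minimal resolution) is a smooth projective uniruled surface. Using the strict transform $\hat C$ and the inequality \eqref{inequalitycanonical}, we get $K_{\hat S_j} \cdot \hat C \leq K_{S_j} \cdot C < 0$, so Lemma~\ref{lemmabb}~a) yields an effective $1$-cycle $\sum_k \alpha_k C_k$ with $[\sum_k \alpha_k C_k] = [\hat C]$, coefficients $\alpha_k \in \Q^+$, and $C_1$ a rational curve with $K_{\hat S_j} \cdot C_1 < 0$.

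It remains to push everything down to $X$. Since $K_{\hat S_j}$ is $\pi_j$-nef but $K_{\hat S_j} \cdot C_1 < 0$, the curve $C_1$ cannot be $\pi_j$-exceptional, so its image $(\pi_j)(C_1)$ is an irreducible rational curve on $X$. Pushing forward the relation on $\hat S_j$ gives $[C] = (\pi_j)_*[\hat C] = \sum_k \alpha_k (\pi_j)_*[C_k]$, a non-negative combination of classes in $\NEX$; extremality of $\R^+[\Gamma]$ again forces the nonzero summand $(\pi_j)_*[C_1]$ to lie in $\R^+[\Gamma]$, hence $[(\pi_j)(C_1)] \in \R^+[\Gamma]$. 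Thus $(\pi_j)(C_1)$ is the rational curve required in (2).

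I expect the main obstacle to lie in the deformation-theoretic heart of (1): guaranteeing, through the minimality of $m$ and the resulting absence of a fixed component, that a single irreducible component of a possibly reducible or non-reduced general member actually deforms, so that it yields a moving curve inside the ray rather than a rigid one. Once this is in place, the two appeals to the face property of the extremal ray, before and after push-forward, are routine.
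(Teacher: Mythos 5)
Your proposal is correct and follows essentially the same route as the paper: part (1) via a minimal multiple of $\Gamma_i$ and the absence of a fixed component (the paper phrases this as a contradiction with the minimal $m$ taken over all curves in the ray, but the content is the same), and part (2) via Lemma \ref{lemmabasic}, the passage to $\hat S_j$ and Lemma \ref{lemmabb} a), followed by push-forward and the face property of the ray. The only point you gloss over is that before forming the strict transform $\hat C$ you must replace $C$ by a general deformation so that $C \not\subset S_{j,\sing}$, which is what makes \eqref{inequalitycanonical} applicable; since the deformations of $C$ cover $S_j$, this is immediate.
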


\begin{proof}
Let 
$$
m := \min \{  k \in \N \ | \ \exists \ C \subset X \ \mbox{curve s.t.} \ [C] \in \R^+ [\Gamma_i],  \dim_{k C} \Chow{X}>0 \}.
$$
Our goal is to show that $m=1$. Arguing by contradiction let $C$ be a curve that realises the minimal degree $m>1$.
We have $\dim_{mC} \Chow{X}>0$, so let $(C_t)_{t \in T}$ be a family of deformations.
Note that this family has no fixed component, since we chose $m$ to be minimal.
Thus if $C_t$ 
is a general member of the family, then for every irreducible component $C_t' \subset C_t$ we
have $\dim_{C_t'} \Chow{X}>0$. Since the ray $\R^+ [\Gamma_i]$ is extremal, we have $C_t' \in \R^+ [\Gamma_i]$,
a contradiction.

For the proof of statement b), choose $C \in \R^+ [\Gamma_i]$ such that $\dim_{C} \Chow{X}>0$.
By Lemma \ref{lemmabasic} the deformations of $C$ are contained in
one of the surfaces $S_j$ from the divisorial Zariski decomposition \eqref{Bdecomposition} and we have
$$
K_{S_j} \cdot C < K_X \cdot C <0.
$$
Up to replacing $C$ by a general deformation we can suppose that $C \not\subset S_{j, sing}$.
Let \holom{\pi_j}{\hat{S}_j}{S_j} be the composition of normalisation and minimal resolution (cf. Subsection \ref{subsectionsurfaces}). 
The strict transform
$\hat C$ of $C$ is well-defined and by \eqref{inequalitycanonical} we have  
$$
K_{\hat S_j} \cdot \hat C \leq K_{S_j} \cdot C <0.
$$
By Lemma \ref{lemmabb},a) there exists an effective 1-cycle $\sum \alpha_k C_k$ with coefficients in $\Q^+$ such that
$$
[\sum \alpha_k C_k] = [\hat C]
$$
and $C_1$ is a rational curve such that $K_{\hat S_j} \cdot C_1 <0$. 
Since $K_{\hat S_j}$ is $\pi_j$-nef, we have $(\pi_j)_* C_1 \neq 0$. 
Thus we obtain 
$$
[C] = (\pi_j)_* [\hat C] = \sum_k \alpha_k (\pi_j)_* [C_k],
$$ 
and the first term $\alpha_1 (\pi_j)_* [C_1]$ is not zero. Since the ray $\R^+ [\Gamma_i]$
is extremal, we see that $[C_1]$ is a positive multiple of $[\Gamma_i]$.
\end{proof}

\section{Cone theorems}

\begin{lemma} \label{lemmaclosure}
Let $X$ be a normal compact K\"ahler threefold such that $K_X$ is $\Q$-Cartier.
Let $N \subset \NAX \subset N_1(X)$ be a closed convex cone.
Suppose that there exists a number $d \in \N$ and a countable family $(\Gamma_i)_{i \in I}$  of curves  on $X$
such that 
$$
0 < -K_X \cdot \Gamma_i \leq d
$$
and
$$
N = \overline{N_{K_X \geq 0} + \sum_{i \in I} \R^+ [\Gamma_i]},
$$ 
Then we have
$$
N = N_{K_X \geq 0} + \sum_{i \in I} \R^+ [\Gamma_i].
$$ 
\end{lemma}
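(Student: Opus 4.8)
The plan is to prove the (only apparently stronger) statement that the cone $V := N_{K_X \geq 0} + \sum_{i \in I} \R^+[\Gamma_i]$ is already \emph{closed}, so that $V = \overline V = N$. I fix once and for all a K\"ahler class $\omega$ on $X$, used only to measure volumes, and a linear form $\psi \in N^1(X)$ with $\psi > 0$ on $N \setminus \{0\}$, which exists because $N$ is closed and contains no lines; thus every slice $\{ v \in N \mid \psi \cdot v \leq c \}$ is compact, which is what makes all limit extractions below work. The one genuinely geometric input is a \emph{local finiteness} statement: for each $\varepsilon > 0$ only finitely many of the classes $[\Gamma_i]$ satisfy $-K_X \cdot \Gamma_i \geq \varepsilon\,(\omega \cdot \Gamma_i)$. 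Indeed such a $\Gamma_i$ has $\omega \cdot \Gamma_i \leq d/\varepsilon$ by the hypothesis $-K_X \cdot \Gamma_i \leq d$, so it lies in the locus of $\Chow{X}$ parametrising cycles of bounded volume; this locus is compact, meets only finitely many irreducible components, and the class in $N_1(X)$ is constant along each component. A direct consequence is that, after normalising the rays into a compact slice, the accumulation points of $\{[\Gamma_i]\}$ all lie on the wall $\{K_X = 0\}$.

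I would then argue by induction on $\dim N$, the case $\dim N \leq 1$ being immediate, and I may assume $N$ spans $N_1(X)$. Suppose for contradiction that $V \subsetneq N$ and choose $z \in N \setminus V$; since $K_X \cdot z \geq 0$ would give $z \in N_{K_X \geq 0} \subseteq V$, necessarily $K_X \cdot z < 0$. For $\varepsilon > 0$ put $I_\varepsilon := \{ i \mid -K_X \cdot \Gamma_i \geq \varepsilon\,\omega \cdot \Gamma_i \}$, which is finite by local finiteness, and $V_\varepsilon := N_{K_X \geq 0} + \sum_{i \in I_\varepsilon} \R^+[\Gamma_i]$. Using the compact $\psi$-slices one checks that $V_\varepsilon$ is closed: a convergent sequence in it splits into an $N_{K_X \geq 0}$-part and a part in the finitely generated (hence closed) cone on the $[\Gamma_i]$, $i \in I_\varepsilon$, and both pieces stay bounded and so have convergent subsequences. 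Moreover $V_\varepsilon \subseteq V$ and $\bigcup_{\varepsilon > 0} V_\varepsilon = V$. As $z \notin V_\varepsilon$, Hahn--Banach yields a functional $\phi_\varepsilon$ with $\phi_\varepsilon \geq 0$ on $V_\varepsilon$ and $\phi_\varepsilon(z) < 0$; normalising and letting $\varepsilon \to 0$ produces a nonzero limit $\phi$ that is $\geq 0$ on $\bigcup_\varepsilon V_\varepsilon = V$, hence on $\overline V = N$, with $\phi(z) \leq 0$. Since $z \in N$ this forces $\phi(z) = 0$, so $z$ lies on the proper face $F := N \cap \ker \phi$, with $\dim F < \dim N$.

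It then suffices to verify that $F$ again satisfies the hypotheses of the Lemma, i.e. that $F = \overline{\,F_{K_X \geq 0} + \sum_{i \in I_F} \R^+[\Gamma_i]\,}$, where $F_{K_X \geq 0} = F \cap \{K_X \geq 0\}$ and $I_F = \{ i \mid [\Gamma_i] \in F \}$; the rays indexed by $I_F$ still obey $0 < -K_X \cdot \Gamma_i \leq d$, so the induction hypothesis applies to $F$ and gives $F = F_{K_X \geq 0} + \sum_{i \in I_F} \R^+[\Gamma_i] \subseteq V$, whence $z \in F \subseteq V$, contradicting $z \notin V$. To establish this decomposition of $F$ I take $z' \in F$ and write it as a limit of $a_n + b_n$ with $a_n \in N_{K_X \geq 0}$ and $b_n \in \sum_i \R^+[\Gamma_i]$; $\psi$-boundedness lets me pass to limits $a_n \to a_* $ and $b_n \to b_*$, and since $\phi(a_n) \to 0$ one gets $a_* \in F_{K_X \geq 0}$, reducing the problem to showing $b_* \in \overline{\,F_{K_X \geq 0} + \sum_{i \in I_F} \R^+[\Gamma_i]\,}$. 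For this I truncate $b_n$ by slope: the part supported on rays with $-K_X \cdot \Gamma_i \geq \delta\,\omega \cdot \Gamma_i$ lives in a fixed finitely generated (closed) cone and converges to some $\beta_*^\delta$, which lies on $F$ because $\phi(\beta_*^\delta)=0$ forces its ray components to be those with $\phi([\Gamma_i])=0$; the complementary part $\gamma_*^\delta = b_* - \beta_*^\delta$ satisfies $-K_X \cdot \gamma_*^\delta \leq \delta\,\omega \cdot \gamma_*^\delta$, and letting $\delta \to 0$ pushes it into $F_{K_X \geq 0}$ while $\beta_*^\delta$ converges into $\overline{\sum_{i \in I_F} \R^+[\Gamma_i]}$.

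The main obstacle is exactly this last face-inheritance step. A priori the rays $[\Gamma_i]$ lying off $F$ can accumulate \emph{onto} $F$ and could thereby contribute extreme directions of $F$ not generated by the rays actually on $F$; it is precisely the local finiteness of the first paragraph --- which confines all accumulation of the normalised rays to the wall $\{K_X = 0\}$ --- that guarantees these limiting directions are absorbed into $F_{K_X \geq 0}$ rather than producing new generators, and hence that the two-parameter limit ($\delta \to 0$ after the polyhedral truncation) converges into the required cone. Everything else is routine convex geometry together with the compactness supplied by $\psi$.
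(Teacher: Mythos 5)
Your argument is correct, but it takes a genuinely different route from the paper's. Both proofs rest on the same geometric input: for each $\varepsilon>0$ only finitely many classes $[\Gamma_i]$ satisfy $-K_X\cdot\Gamma_i\geq\varepsilon\,\omega\cdot\Gamma_i$, since such classes have $\omega$-degree at most $d/\varepsilon$ and bounded-degree curve classes form a finite set; your subfamilies $I_\varepsilon$ are exactly the finite index sets $J$ appearing in the paper. The difference is what is done with them. The paper, following Debarre's treatment of the projective cone theorem, observes that $V$ is the convex hull of its extremal rays, fixes a $K_X$-negative extremal ray $\R^+[r]$ of $\overline V$, perturbs to $K_X+\varepsilon\omega$ (still negative on $r$), rewrites the hypothesis as $N=N_{K_X+\varepsilon\omega\geq0}+\sum_{j\in J}\R^+[\Gamma_j]$ with $J$ finite (so the sum is automatically closed), and then writes $r=\lim(r_m+s_m)$, using extremality of $r$ to force $r$ onto one of the finitely many rays $\R^+[\Gamma_{j_0}]$. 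You instead keep $K_X$ fixed, exhaust $V$ by the closed subcones $V_\varepsilon$, separate a hypothetical $z\in N\setminus V$ from each $V_\varepsilon$, pass to a limiting supporting functional $\phi$ cutting out a proper face $F\ni z$, verify that $F$ inherits the hypotheses, and induct on the dimension of the linear span of $N$. I checked the delicate face-inheritance step and it does close up: $\phi(\beta_*^\delta)=\phi(\gamma_*^\delta)=0$ forces $\beta_*^\delta$ to be supported on the rays with $\phi([\Gamma_i])=0$, and $-K_X\cdot\gamma_*^\delta\leq\delta\,\omega\cdot b_*$ pushes any $\delta\to0$ limit of $\gamma_*^\delta$ into $F_{K_X\geq0}$, exactly because all accumulation of the normalised rays happens on the wall $\{K_X=0\}$. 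Your route avoids invoking the extremal-ray decomposition of a pointed closed cone, at the price of the dimension induction and the two-parameter limit; a minor bonus is that you only ever evaluate $\omega$ on nonnegative combinations of curve classes, whereas the paper's replacement of $N_{K_X\geq0}$ by $N_{K_X+\varepsilon\omega\geq0}$ implicitly uses $\omega\geq0$ on $N$, which holds in the intended applications ($N=\NEX$ or $\NAX$) but not for an arbitrary closed convex cone $N$ as in the statement.
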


Our argument follows the proof of the cone theorem for projective manifolds in \cite{Deb01}.

\begin{proof}
Set $V:=N_{K_X \geq 0} + \sum_i \R^+ [\Gamma_i]$. We want to prove that $\overline V=V$, i.e.
the cone $V$ is closed.
By assumption $N \subset \NAX$, so we have $V \subset \NAX$. In particular $V$ contains no lines 
and is the convex hull of its extremal rays \cite[Lemma 6.7 b)]{Deb01}.
It is thus sufficient to show that if $\R^+ [r]$ is an extremal ray in $\overline V$ such that
$K_X \cdot r<0$, then $\R^+ [r]$ is in $V$.

Let $\omega$ be a K\"ahler form on $X$ and choose a $\varepsilon>0$ such that
$$
(K_X+\varepsilon \omega) \cdot r < 0.
$$
Then we have by our hypothesis
$$
N = \overline{N_{K_X+\varepsilon \omega \geq 0} + \sum_{j \in J} \R^+ [\Gamma_j]}
$$ 
where the sum runs over those indices $j \in I$ such that $(K_X+\varepsilon \omega) \cdot \Gamma_j<0$.
Since $-K_X \cdot \Gamma_j \leq d$ we have $\omega \cdot \Gamma_j \leq \frac{d}{\varepsilon}$. Since the degree
of the cohomology classes $[\Gamma_j]$ with respect to $\omega$ is bounded,  there are only finitely many
such classes. In particular we have
$$
N = N_{K_X+\varepsilon \omega \geq 0} + \sum_{j \in J} \R^+ [\Gamma_j]
$$ 
and up to renumbering we can suppose that $J = \{ 1, \ldots, q \}$ for some $q \in \N$.

Write now $r$ as the limit of a sequence $r_m+s_m$ such that for all $m \in \N$ one has
$$
(K_X+\varepsilon \omega) \cdot r_m \geq 0
$$
and $s_m \in \sum_{j=1}^q  \R^+ [\Gamma_j]$, that is 
$$
s_m = \sum_{j=1}^q \lambda_{j, m} \Gamma_j.
$$
Since $\omega \cdot (r_m+s_m)$ converges to $\omega \cdot r$ 
and $\omega$ is non-negative on every element of $N$, the sequences
$\omega \cdot r_m$ and $\omega \cdot \lambda_{j, m} \Gamma_j$ are bounded
by $\omega \cdot r+1$ for large $m$. In particular we can assume after taking subsequences
that the sequences $r_m$ and $\lambda_{j, m} \Gamma_j$ converge. Since
$$
r = \lim_{m \to \infty} r_m + \sum_{j=1}^q \lim_{m \to \infty} \lambda_{j, m} \Gamma_j.
$$
and $r$ is extremal in $\overline V$ this implies that  $\lim_{m \to \infty} r_m$  and $\lim_{m \to \infty} \lambda_{j, m} \Gamma_j$
are non-negative multiples of $r$. 
Since $(K_X+\varepsilon \omega) \cdot \lim_{m \to \infty} r_m \geq 0$ and
$(K_X+\varepsilon \omega) \cdot r<0$ we get that $\lim_{m \to \infty} r_m=0$. 
This implies that there exists a $j_0 \in \{ 1, \ldots, q \}$ such that $\lim_{m \to \infty} \lambda_{j_0, m} z_{j_0}$ 
is a positive multiple of $r$. Hence
the extremal rays $\R^+ r$ and $\R^+ z_{j_0}$ coincide, so $\R^+ r$ is in $V$.
\end{proof}

\begin{theorem} \label{theoremNE}
Let $X$ be a  normal  $\Q$-factorial compact K\"ahler threefold with at most terminal singularities
such that $K_X$ is pseudoeffective. 
Then there exists a number $d \in \N$ and a countable family $(\Gamma_i)_{i \in I}$  of curves  on $X$
such that 
$$
0 < -K_X \cdot \Gamma_i \leq d
$$
and
$$
\NEX = \NEX_{K_X \geq 0} + \sum_{i \in I} \R^+ [\Gamma_i].
$$
If the ray $\R^+ [\Gamma_i]$ is extremal in $\NEX$, there exists a rational curve $C_i$ on $X$ 
such that $[C_i] \in \R^+ [\Gamma_i]$.
\end{theorem}

\begin{proof}[Proof of Theorem \ref{theoremNE}]
Let $d \in \N$ be the bound from Corollary \ref{corollarybreak}. There are only countably many curve classes $[C] \in \NEX$,
and we choose a representative $\Gamma_i$ for each class such that  $0 < -K_X \cdot \Gamma_i \leq d$.
We set
$$
V := \NEX_{K_X \geq 0} + \sum_{0 < -K_X \cdot \Gamma_i \leq d} \R^+ [\Gamma_i].
$$
Fix a K\"ahler form $\omega$ on $X$ such that 
$$
\omega \cdot C \geq 1
$$ 
for every curve $C \subset X$\footnote{
The cohomology class of a curve $C \subset X$ is contained in the integral
lattice $H^4(X, \Z)$, so
the cohomology classes of curves in $X$ form a discrete set in $\NEX$. Thus for every 
K\"ahler form $\omega'$ there exists a real constant $\lambda>0$ such that
$$   
\omega' \cdot C \geq \lambda
$$ 
for every curve $C \subset X$. 
}. 

{\em Step 1. We have $\NEX=V$.} 
By Lemma \ref{lemmaclosure} it is sufficient to prove that $\NEX = \overline V$, i.e.
the class $[C]$ of every irreducible curve $C \subset X$
is contained in $V$. 
We will prove the statement by induction on the degree $l:=\omega \cdot C$. The start
of the induction for $l=0$ is trivial.
Suppose now that we have shown the statement for all curves of degree at most $l-1$
and let $C$ be a curve such that $l-1 < \omega \cdot C \leq l$.
If $-K_X \cdot C \leq d$ we are done. Otherwise there exists by Corollary \ref{corollarybreak} 
a decomposition
$$
[C] = [C_1] + [C_2]
$$
with $C_1$ and $C_2$ effective 1-cycles (with integer coefficients) on $X$.
Since $\omega \cdot C_i \geq 1$ for $i=1,2$ we have
$\omega \cdot C_i \leq l-1$ for $i=1,2$. By induction both classes are in $V$, so $[C]$ is in $V$.

{\em Step 2. Every extremal ray contains the class of a rational curve.}
If the ray $\R^+ [\Gamma_i]$ is extremal in $\NEX$ we know by Theorem \ref{theoremveryrigid}
and Lemma \ref{lemmarationalrepresentative} that there exists a rational curve $C_i$ such that $[C_i]$
is in the extremal ray.
\end{proof}

\begin{theorem} \label{theoremNApreliminary}
Let $X$ be a  normal  $\Q$-factorial compact K\"ahler threefold with at most terminal singularities
such that $K_X$ is pseudoeffective. 
Then there exists a $d \in \N$ and a countable family $(\Gamma_i)_{i \in I}$  of curves  on $X$
such that 
$$
0 < -K_X \cdot \Gamma_i \leq d
$$
and
$$
\NAX = \NAX_{K_X \geq 0} + \sum_{i \in I} \R^+ [\Gamma_i] 
$$
If the ray $\R^+ [\Gamma_i]$ is extremal in $\NAX$, there exists a rational curve $C_i$ on $X$ 
such that $[C_i] \in \R^+ [\Gamma_i]$.
\end{theorem}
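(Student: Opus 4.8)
The plan is to reduce the statement to the cone theorem for the Mori cone $\NEX$ (Theorem~\ref{theoremNE}), by proving that the directions present in the larger cone $\NAX$ but not in $\NEX$ are all $K_X$-nonnegative. I take the integer $d$ and the countable family $(\Gamma_i)_{i\in I}$ produced by Theorem~\ref{theoremNE} and set $V:=\NAX_{K_X \geq 0}+\sum_{i\in I}\R^+[\Gamma_i]$. Since $\NEX_{K_X \geq 0}\subset\NAX_{K_X \geq 0}$, Theorem~\ref{theoremNE} gives $V=\NAX_{K_X \geq 0}+\NEX$; and as each $[\Gamma_i]$ is a curve class and $\NEX\subset\NAX$, we have $V\subset\NAX$. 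Everything therefore reduces to the reverse inclusion together with the closedness of $V$.

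For the closedness I plan to apply Lemma~\ref{lemmaclosure} with $N=\NAX$. Two hypotheses must be checked: that $\NAX$ contains no lines, and that $\NAX=\overline V$. The first holds because $\NAX$ is dual to ${\rm Nef}(X)=\overline{\sK}$ (Proposition~\ref{dual1}, Remark~\ref{remarkkaehlerconenef}), and the K\"ahler cone $\sK$ has nonempty interior, so its dual cone is salient. Thus the whole theorem comes down to the single equality $\NAX=\overline{\NAX_{K_X \geq 0}+\NEX}$, that is, to showing that every $K_X$-negative extremal ray of $\NAX$ is generated by a curve class.

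To establish this I would take a nonzero closed positive current $T$ of bidimension $(1,1)$ with $K_X\cdot T<0$ and apply Siu's decomposition theorem \cite{Siu74}, exactly as in Step~1 of the proof of Theorem~\ref{kaehler2}, to write $T=\sum_k \lambda_k T_{C_k}+R$, where the $C_k$ are countably many irreducible curves, $\lambda_k>0$, and $R$ is a closed positive current whose generic Lelong number along every curve vanishes. The curve part has class $\sum_k\lambda_k[C_k]$, a mass-convergent series of elements of the closed cone $\NEX$, hence lies in $\NEX$. It then suffices to prove that $K_X\cdot R\ge 0$, for then $[R]\in\NAX_{K_X \geq 0}$ and $[T]\in\NEX+\NAX_{K_X \geq 0}$.

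The positivity $K_X\cdot R\ge 0$ is the main obstacle. I would analyse it through the divisorial Zariski decomposition \eqref{Bdecomposition}, writing $K_X\cdot R=\sum_j\lambda_j\,(S_j\cdot R)+N(K_X)\cdot R$, and argue that each term is nonnegative: the class $N(K_X)$ is nef in codimension one and $R$ carries no mass on curves, which should force $N(K_X)\cdot R\ge 0$, while the restriction of $R$ to each prime divisor $S_j$ ought to be a positive measure, giving $S_j\cdot R\ge 0$. The delicate point is the current-theoretic justification of these intersection numbers and, in particular, controlling the possible concentration of $R$ along the surfaces $S_j$; this is where the fine positivity results underlying \eqref{Bdecomposition} and the hypotheses on the singularities must enter. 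Granting this, Lemma~\ref{lemmaclosure} yields $\NAX=V$. Finally, a $K_X$-negative extremal ray of $\NAX$ is, by the above, a curve class and hence also extremal in the smaller cone $\NEX$, so Theorem~\ref{theoremveryrigid} and Lemma~\ref{lemmarationalrepresentative} produce a rational curve in it, exactly as in Step~2 of the proof of Theorem~\ref{theoremNE}.
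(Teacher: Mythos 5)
Your reduction to the closedness lemma (Lemma \ref{lemmaclosure}) and to Theorem \ref{theoremNE} is sound and matches the paper's strategy, and you correctly identify where the real work lies. But the way you propose to close the remaining gap does not work: the claim that the residual part $R$ of the Siu decomposition satisfies $K_X\cdot R\ge 0$ is false, and so are both of the termwise inequalities you want to deduce it from. A concrete counterexample: let $X$ be the blow-up of a $3$-dimensional torus at a point, $E\simeq\PP^2$ the exceptional divisor, and $R=(i_E)_*(\omega|_E)$ for $\omega$ a K\"ahler form. This is a closed positive current of bidimension $(1,1)$ whose generic Lelong number along every curve vanishes (its mass in a ball of radius $r$ around a generic point of $E$ is of order $r^4$), so it survives intact as the residual part; yet $K_X=2E$, the divisorial Zariski decomposition is $K_X=2E+0$, and $K_X\cdot R=2\,E|_E\cdot\omega|_E=-2\deg_\omega\sO_{\PP^2}(1)<0$. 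The same computation shows $S_1\cdot R<0$, refuting the assertion that the restriction of $R$ to a prime divisor pairs nonnegatively with that divisor; the surfaces $S_j$ of the Zariski decomposition are precisely the ``exceptional'' ones, for which $S_j|_{S_j}$ tends to be negative. So the $K_X$-negative part of $\NAX$ genuinely contains diffuse currents with no curve components, and no argument can place them in $\NAX_{K_X\ge 0}$.

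What the paper does instead is show that these problematic classes, while $K_X$-negative, are nevertheless \emph{limits of curve classes}. It first replaces your Siu decomposition by the Demailly--P\u{a}un structure theorem: after lifting to a desingularisation via Proposition \ref{ext}, every class in $\overline{NA}(\hat X)$ is a limit of combinations of $[\hat\omega]^2$, $[\hat\omega]\cdot[\hat S]$ and $[\hat C]$ (\cite[Cor.0.3]{DP04}). The first type is $K_X$-nonnegative by pseudoeffectivity of $\pi^*K_X$, the third is a curve class, and for the second type with $K_X\cdot[\hat\omega]\cdot[\hat S]<0$ one invokes Lemma \ref{lemmasurfaces}: the image surface is one of the $S_j$, projective and uniruled, so $H^2(\hat S_j,\sO_{\hat S_j})=0$ and the a priori transcendental class $\pi_j^*\omega$ is an honest nef and big $\R$-divisor class, hence a limit of ample $\Q$-classes represented by effective $1$-cycles. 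In the torus example above this is exactly what happens: $[\omega|_E]$ is a positive multiple of the class of a line in $\PP^2$. If you want to salvage your approach, you would have to prove this approximation-by-curves statement for the residual current $R$ restricted to each $S_j$, which is essentially the paper's third case; the inequality $K_X\cdot R\ge0$ is not available.
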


Theorem \ref{theoremNApreliminary} is a consequence of Theorem \ref{theoremNE} and the following proposition.

\begin{proposition} \label{propositionNA}
Let $X$ be a  normal  $\Q$-factorial compact K\"ahler threefold with at most terminal singularities
such that $K_X$ is pseudoeffective. 

Suppose that there exists a $d \in \N$ and a countable family $(\Gamma_i)_{i \in I}$  of curves  on $X$
such that 
$$
0 < -K_X \cdot \Gamma_i \leq d
$$
and
$$
\NEX = \NEX_{K_X \geq 0} + \sum_{i \in I} \R^+ [\Gamma_i] 
$$
Then we have
$$
\NAX = \NAX_{K_X \geq 0} + \sum_{i \in I} \R^+ [\Gamma_i] 
$$
\end{proposition}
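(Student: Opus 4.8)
The plan is to deduce the statement for $\NAX$ from the given decomposition of $\NEX$ by combining a soft reduction with a structure result for positive closed currents. The inclusion $\NAX \supseteq \NAX_{K_X \geq 0} + \sum_{i} \R^+[\Gamma_i]$ is immediate, since $\NAX_{K_X \geq 0} \subset \NAX$ and every $\R^+[\Gamma_i] \subset \NEX \subset \NAX$ (Remark \ref{remarkNE}). For the reverse inclusion I would apply Lemma \ref{lemmaclosure} to the cone $N := \NAX$: this is a closed convex cone containing no lines, because $X$ is K\"ahler, so a K\"ahler class is strictly positive on $\NAX \setminus \{0\}$ (Proposition \ref{dual1}). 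Thus it suffices to establish the single inclusion $\NAX \subseteq \overline{\NAX_{K_X \geq 0} + \sum_i \R^+[\Gamma_i]}$, after which Lemma \ref{lemmaclosure} removes the closure and yields the proposition in the stated (non-closed) form.

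The heart of the matter is the claim
$$ \NAX = \overline{\NEX + \NAX_{K_X \geq 0}}. $$
Granting this, the hypothesis $\NEX = \NEX_{K_X \geq 0} + \sum_i \R^+[\Gamma_i]$ together with $\NEX_{K_X \geq 0} \subseteq \NAX_{K_X \geq 0}$ gives $\NEX + \NAX_{K_X\geq0} = \NAX_{K_X\geq0} + \sum_i \R^+[\Gamma_i]$ (a subcone added to a convex cone containing it is absorbed), so the closure above is exactly $\overline{\NAX_{K_X\geq0} + \sum_i \R^+[\Gamma_i]}$ and the desired inclusion follows. To prove the claim it is enough to treat a single generator, i.e. to show that every positive closed current $T$ of bidimension $(1,1)$ satisfies $[T] \in \NEX + \NAX_{K_X \geq 0}$.

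The key step is to Siu-decompose such a current. Since $T$ has bidegree $(2,2)$, the Siu decomposition \cite{Siu74} (already invoked by the authors in the proof of Theorem \ref{kaehler2}) reads $ T = \sum_{k} \lambda_k \, T_{C_k} + R$, where the $C_k$ are curves, $\lambda_k > 0$, and $R$ is a positive closed current of bidimension $(1,1)$ whose generic Lelong numbers along curves all vanish (its positive upperlevel sets are finite). Cohomologically $\sum_k \lambda_k [C_k]$ lies in the closed cone $\NEX$, and $[R] \in \NAX$ by Definition \ref{definitionNA}. Hence everything reduces to showing that $[R]$ lies in $\NAX_{K_X \geq 0}$, that is, that $K_X \cdot R \geq 0$.

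The main obstacle is precisely this positivity $K_X \cdot R \geq 0$ for the diffuse part $R$. Here I would use that $K_X$ is pseudoeffective and invoke its divisorial Zariski decomposition $K_X = \sum_j \lambda_j S_j + N(K_X)$ from \eqref{Bdecomposition}: since $R$ charges no curve, intersecting with each prime divisor gives $S_j \cdot R \geq 0$ (a negative contribution could only come from mass of $R$ concentrated along curves in $S_j$), while $N(K_X) \cdot R \geq 0$ because $N(K_X)$ is nef in codimension one and $R$ is diffuse. Conceptually this is the assertion that the diffuse part of a positive closed $(n-1,n-1)$-current is \emph{movable} and that movable classes pair nonnegatively with pseudoeffective divisors --- the K\"ahler analogue, due to Boucksom \cite{Bou04} (and \cite{BDPP04} in the projective case), of the duality between the pseudoeffective and the movable cones. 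Making the pairings $S_j \cdot R$ and $N(K_X)\cdot R$ rigorous for currents on the space $X$, which is terminal and hence has only isolated singularities, by passing to a K\"ahler resolution and pushing forward via Proposition \ref{ext} if necessary, is the technical crux; once it is in place, $K_X \cdot R \geq 0$ and the argument concludes as above.
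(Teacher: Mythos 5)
Your reduction via Lemma \ref{lemmaclosure} to the single inclusion $\NAX\subseteq\overline{\NAX_{K_X\geq 0}+\sum_i\R^+[\Gamma_i]}$ is exactly the paper's first step. The gap is in the core of your argument: the claim that the diffuse part $R$ of the Siu decomposition satisfies $K_X\cdot R\geq 0$, and in particular the assertion that $S_j\cdot R\geq 0$ because ``a negative contribution could only come from mass of $R$ concentrated along curves in $S_j$.'' That is false for currents of bidimension $(1,1)$: negativity against a divisor class can come from mass concentrated along the \emph{surface} $S_j$ itself, which is not excluded by the vanishing of the generic Lelong numbers of $R$ along curves. Concretely, take $R=(i_S)_*(\omega|_S)$ for a surface $S\subset X$ and $\omega$ a K\"ahler form; all Lelong numbers of $R$ along curves vanish (its trace measure on a ball of radius $r$ grows like $r^4$), yet $S\cdot R=(S|_S)\cdot\omega|_S$ can be negative (e.g.\ $S$ an exceptional $\PP^2$ with normal bundle $\sO(-1)$). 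In the situation of the proposition this is not a pathological case but the essential one: if $S=S_j$ is a surface of the divisorial Zariski decomposition with $K_X|_{S_j}$ not pseudoeffective, then $K_X\cdot R<0$ for such an $R$, so your intended conclusion $[R]\in\NAX_{K_X\geq 0}$ fails, and your identity $\NAX=\overline{\NEX+\NAX_{K_X\geq 0}}$ is left unproven. (The pseudoeffective/movable duality you invoke concerns classes of the form $\mu_*(\tilde\omega_1\cdots\tilde\omega_{n-1})$; the diffuse part of an arbitrary positive closed $(2,2)$-current need not be movable in that sense.)

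This is precisely the case the paper isolates and treats as the hardest one. Instead of a Siu decomposition on $X$, it pulls $\alpha\in\NAX$ back to a K\"ahler resolution $\hat X$ via Proposition \ref{ext} and uses the Demailly--P\u aun generation of $\overline{NA}(\hat X)$ by classes $[\hat\omega]^2$, $[\hat\omega]\cdot[\hat S]$ and $[\hat C]$. The first and third types are handled as you would expect, but for $[\hat\omega]\cdot[\hat S]$ with $\pi^*(K_X)\cdot[\hat\omega]\cdot[\hat S]<0$ the argument is not a positivity statement at all: one shows via Lemma \ref{lemmasurfaces} that $\pi(\hat S)$ is one of the $S_j$, projective and uniruled, hence $H^2(\hat S_j,\sO_{\hat S_j})=0$, so the K\"ahler class pulled back to $\hat S_j$ is a nef and big $\R$-divisor class and therefore a limit of ample $\Q$-classes represented by effective $1$-cycles; pushing these forward shows $[\omega]\cdot[S]\in\overline{\NEX}$ and hence in $\overline V$. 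Your proof has no substitute for this approximation-by-curves step, and without it the ``problematic'' diffuse classes supported on the $S_j$ are not accounted for.
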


\begin{proof}
Set
$$
V := \NAX_{K_X \geq 0} + \sum_{i \in I} \R^+ [\Gamma_i].
$$
By Lemma \ref{lemmaclosure} it is sufficient to show that $\NAX \subset \overline{V}$.
Let $\pi: \hat X \to X$ be a desingularisation and consider $\alpha \in \NAX.$ By Proposition \ref{ext}, there exists $\hat \alpha \in \overline {NA}(\hat X)$ such that 
$\alpha = \pi_*(\hat \alpha). $ 
By \cite[Cor.0.3]{DP04} the cone $\overline {NA}(\hat X)$ is the closure of the convex cone generated by cohomology classes
of the form $[\hat \omega]^2, [\hat \omega] \cdot [\hat S]$ and $[\hat C]$ where $\hat \omega$ is a K\"ahler form,
$\hat S$ a surface and $\hat C$ a curve on $\hat X.$ 
Our goal is to show that the $\pi-$image of any of this three types is contained in $\overline V$. 

{\em 1st case. $\hat \alpha = [\hat \omega]^2$ with $\hat \omega$ a K\"ahler form.} 
Since
$\pi^*(K_{X})$ is pseudoeffective, we have $\pi^*(K_{X}) \cdot [\hat \omega]^2 \geq 0$, hence 
$$ K_X \cdot \alpha = K_X \cdot \pi_*(\hat \alpha) = \pi^*(K_{X}) \cdot \hat \alpha \geq 0, $$
and thus 
$\alpha \in \NAX_{K_X \geq 0}$. 

{\em 2nd case. $\hat \alpha = [\hat C]$ with $\hat C$ a curve.} 
Then set $C := \pi_*(\hat C)$, so that $\alpha = [C].$
Since we have an inclusion
\begin{equation} \label{inclusion}
\NEX = \NEX_{K_X \geq 0} + \sum_{i \in I} \R^+ [\Gamma_i] \hookrightarrow \NAX_{K_X \geq 0} + \sum_{i \in I} \R^+ [\Gamma_i], 
\end{equation}
and by hypothesis any curve class $[C]$ is in the left hand side, we see that $[C] \in V$.\\
(3) 
{\em 3rd case. $\hat \alpha = [\hat \omega] \cdot [\hat S]$ with $\hat S$ an irreducible surface
and $\hat \omega$ a K\"ahler form.}
If 
$$\pi^*(K_X) \cdot [\hat \omega] \cdot [\hat S] \geq 0,$$
the class $\alpha $ is in $\NAX_{K_X \geq 0}$.
Suppose now that 
$$
\pi^*(K_X )\cdot [\hat \omega] \cdot [\hat S] < 0.
$$ 
{\it We claim} that $\hat \alpha \in \NE{\hat X}$. Since $\pi_*(\NE{\hat X})=\NEX$ this implies that
$\alpha=\pi_* (\hat \alpha)$
is in the image of the inclusion \eqref{inclusion} which concludes the proof.

{\em Proof of the claim.} 
Using the projection formula we see that $\pi(\hat S)$ is not a point.
Since $X$ has at most isolated singularities we see that $\pi(\hat S)$ is not a curve, 
so we can suppose that $\pi(\hat S)$ is a surface $S$.
Since $\pi^*(K_X) \cdot [\hat \omega] \cdot [S] < 0$, the restriction $\pi^*(K_X)|_{\hat S}$ is not pseudoeffective. 
Thus the restriction $K_X \vert_S$ is not pseudoeffective. By Lemma \ref{lemmasurfaces} 
the surface $S$ is one of the surfaces $S_j$. In particular it is uniruled, hence $\hat S$ is uniruled.
Let $\pi_j: \hat S_j \rightarrow \hat S \subset \hat X$ be the composition of normalisation and minimal resolution (cf. Subsection \ref{subsectionsurfaces}). Then $\hat S_j$ is uniruled and projective. 
Since $\hat S_j$ is uniruled, we have $H^2(\hat S_j, \sO_{\hat S_j})=0$. In particular the Chern class map
$$
\pic{\hat S_j} \rightarrow H^2(\hat S_j, \Z)
$$ 
is surjective, so
$$
NS_{\R}(\hat S_j) = H^2(\hat S_j, \R) \cap H^{1,1}(\hat S_j).
$$
Thus the cohomology class $[\pi_j^* \hat \omega]$ is represented by an $\R$-divisor
which is nef and big. In particular it is a limit of classes of ample $\Q$-divisors $H_m$ on $\hat S_j$ which we can
represent by classes of effective 1-cycles $[C_m]$ with rational coefficients.
We claim that the sequence
$$
(\pi_j)_* [C_m] \in \NE{\hat X} \subset N_1(\hat X)
$$
converges to $\hat \alpha$. 
By duality it is sufficient 
to prove that for every $\eta$ a real closed $(1,1)$-form on $\hat X$, the sequence
$[\eta] \cdot (\pi_j)_* [C_m]$ converges to $[\eta] \cdot [\hat \omega] \cdot [\hat S]$.
Yet by the projection formula we have
$$
[\eta] \cdot (\pi_j)_* [C_m] = [(\pi_j)^* \eta] \cdot [C_m]
$$
and
$$
[\eta] \cdot [\hat \omega] \cdot [\hat S] = \int_{\hat S} \eta|_{\hat S} \wedge \hat \omega|_{\hat S} = 
\int_{\hat S_j} (\pi_j)^* \eta \wedge (\pi_j)^* \omega =
[(\pi_j)^* \eta] \cdot [(\pi_j)^* \omega],  
$$
so the convergence follows from the construction of the sequence $[C_m]$.
\end{proof}

\begin{proof}[Proof of Theorem \ref{theoremNA}]
The only statement that is not part of Theorem \ref{theoremNApreliminary} is that in every $K_X$-negative extremal ray $R_i$ 
we can find a rational curve $\Gamma_i$ such that $\Gamma_i \in R_i$ and
$$
0 < -K_X \cdot \Gamma_i \leq 4.
$$ 
If the extremal ray is small, this is clear by Theorem \ref{theoremveryrigid}.
If the extremal ray is divisorial, the contraction exists by Theorem \ref{theoremcontraction}\footnote{The proof of Theorem 
\ref{theoremcontraction} uses only Theorem \ref{theoremNApreliminary}.}. 
Thus we can conclude by \cite[Thm.7.46]{Deb01}.
\end{proof}

\begin{remark}
The cone theorem \ref{theoremNA} actually holds if $X$ has at most canonical singularities: using a relative MMP (which exists
since we can take a resolution of singularities which is a projective morphism) we construct a bimeromorphic 
morphism \holom{\mu}{X'}{X} such that $K_X' \simeq \mu^* K_X$ and $X'$ has at most terminal singularities.
Since $\mu_* (\NA{X'})=\NAX$ by Proposition \ref{ext}, the statement follows from our theorem.
\end{remark}

\section{Contractions of extremal rays}

For the whole section we make the following

\begin{assumption*}
Let $X$ be a  normal  $\Q$-factorial compact K\"ahler threefold with at most terminal singularities
such that $K_X$ is pseudoeffective. 
We fix  $R := \R^+ [\Gamma_{i_0}]$ a $K_X$-negative extremal ray in $\NAX$. 
\end{assumption*}

\begin{definition}
We say that the $K_X-$negative extremal ray $R$ is small if every curve $C \subset X$ with
$[C] \in R$ is very rigid in the sense of Definition \ref{definitionveryrigid}.
Otherwise we say that the extremal ray $R$ is divisorial.
\end{definition}

\begin{remark} \label{remarksmall}
If the extremal ray $R$ admits a Mori contraction $\holom{\varphi}{X}{Y}$, the contraction
is small (resp. divisorial) if and only if this holds for the extremal ray. Indeed if the extremal ray is small, then
by Theorem \ref{theoremveryrigid} we have $-K_X \cdot C \leq 1$ for every curve such that $[C] \in R$.
In particular there are only finitely many cohomology classes  in $R$ that are represented by curves. 
Since for every class there are only finitely many
deformation families and by hypothesis every curve in the ray is very rigid, we see that there are only finitely many curves
$C \subset X$ such that $[C] \in R$. Thus $\varphi$ contracts only finitely many curves, it is small.
\end{remark}

\begin{proposition} \label{propositionperp} 
There exists a nef class $\alpha \in N^1(X)$ such that
$$ 
R = \{ z \in \overline{NA}(X) \ \vert \ \alpha \cdot z = 0\},
$$
and such that, using the notation of Theorem \ref{theoremNApreliminary}, the class $\alpha$ is strictly positive on
$$
\left( \NAX_{K_X \geq 0} + \sum_{i \in I, i \neq i_0 } \R^+ [\Gamma_i] \right) \setminus \{0\}.
$$ 
We call $\alpha$ a nef supporting class for the extremal ray $R$.
\end{proposition}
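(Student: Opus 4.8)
The plan is to construct the nef supporting class $\alpha$ as a limit of strictly positive classes using the cone description from Theorem \ref{theoremNApreliminary} together with the duality between $\mathrm{Nef}(X)$ and $\NAX$ established in Proposition \ref{dual1}. The key structural input is that the decomposition
$$
\NAX = \NAX_{K_X \geq 0} + \sum_{i \in I} \R^+ [\Gamma_i]
$$
exhibits $\NAX$ as the convex hull of finitely-or-countably many pieces, and that $R = \R^+[\Gamma_{i_0}]$ is an extremal ray with $K_X \cdot \Gamma_{i_0} < 0$. Extremality means precisely that $R$ does not meet the convex hull of the remaining generators except at the origin, so the subcone
$$
Q := \NAX_{K_X \geq 0} + \sum_{i \in I, \ i \neq i_0} \R^+ [\Gamma_i]
$$
satisfies $Q \cap R = \{0\}$; this is what we must separate from $R$ by a linear functional.

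First I would fix a reference class, e.g. $K_X$ itself or a K\"ahler class $\omega$, and use the linear functional $\Phi$ from Proposition \ref{dual} to produce candidate supporting classes. The idea is that a nef class $\alpha$ with $\alpha \cdot z \geq 0$ on all of $\NAX$ and $\alpha \cdot z = 0$ exactly on $R$ is, by Proposition \ref{dual1}, precisely a class in $\mathrm{Nef}(X)$ lying on the face of $\mathrm{Nef}(X)$ dual to $R$ and strictly positive on $Q \setminus \{0\}$. Since $N_1(X)$ is finite-dimensional (Proposition \ref{dual}), I can work with closed convex cones in finite dimension. I would choose a linear form $\lambda \colon N_1(X) \to \R$ that is strictly positive on $Q \setminus \{0\}$ (this uses that $Q$ is a closed cone containing no line, which follows since $\NAX$ is salient because $K_X$ is pseudoeffective and $\omega$ is strictly positive on $\NAX \setminus \{0\}$ up to the $K_X \geq 0$ part) and vanishes on $R$. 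Such a separating functional exists by a standard separation argument once $Q$ is verified to be closed; this is where Lemma \ref{lemmaclosure} enters, guaranteeing that the sum defining $Q$ is already closed so that $Q$ is genuinely a closed cone and not merely dense in its closure. Then I realise $\lambda$ as $\Phi([\alpha])$ for some real closed $(1,1)$-form $\alpha$ via the surjectivity in Proposition \ref{dual}, and Proposition \ref{dual1} converts the positivity of $\lambda$ on $\NAX$ into the nefness of $[\alpha]$.

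The hard part will be ensuring simultaneously that $\alpha$ is \emph{nef} (i.e. nonnegative on all of $\NAX$, not merely on $Q$) and that its zero locus on $\NAX$ is \emph{exactly} $R$ and not a larger face. To achieve the first, I expect to take $\alpha$ as a suitable average or limit: start with a functional strictly positive on $Q \setminus \{0\}$ and vanishing on $R$, then perturb by a small positive multiple of $\omega$ to repair any failure of nonnegativity against the boundary rays $\R^+[\Gamma_i]$ with $i \neq i_0$, while keeping strict positivity on $Q$ and controlling the value on $R$ using $K_X \cdot \Gamma_{i_0} < 0$ together with the bound $-K_X \cdot \Gamma_i \leq d$. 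To pin down that the zero locus is exactly $R$, I would use that $R$ is extremal and that $\alpha$ is strictly positive on the complementary cone $Q$: any $z \in \NAX$ with $\alpha \cdot z = 0$ decomposes as $z = q + t[\Gamma_{i_0}]$ with $q \in Q$ and $t \geq 0$, whence $0 = \alpha \cdot z = \alpha \cdot q$ forces $q = 0$ by strict positivity, so $z \in R$. The delicate technical point throughout is the finiteness needed to run the separation argument cleanly: although $I$ may be countable, for the purpose of separating $R$ from $Q$ it suffices to work with the boundary structure near $R$, and the bound $0 < -K_X \cdot \Gamma_i \leq d$ together with a K\"ahler form $\omega$ gives a uniform degree bound $\omega \cdot \Gamma_i \leq d/\varepsilon$ on the relevant competitors, reducing to finitely many classes exactly as in the proof of Lemma \ref{lemmaclosure}.
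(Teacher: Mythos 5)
Your proposal follows essentially the same route as the paper: define the complementary cone $V = \NAX_{K_X \geq 0} + \sum_{i \neq i_0} \R^+[\Gamma_i]$, use Lemma \ref{lemmaclosure} to see that it is closed, separate it from the extremal ray by a linear form (the paper cites \cite[Lemma 6.7(d)]{Deb01} for this), and realise that form as a nef class via Propositions \ref{dual} and \ref{dual1}. One caution: the separation step needs the full hypothesis that $R$ is extremal in $\NAX = V + R$ with $V$ closed, not merely that $V \cap R = \{0\}$ (two closed convex cones meeting only at the origin need not admit such a separating functional, e.g.\ a circular cone and a tangent ray), but this is exactly what the cited lemma of Debarre supplies, so your argument is complete once you invoke it rather than a generic separation principle.
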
 

\begin{proof} 
We set
$$ 
V := \NAX_{K_X \geq 0} + \sum_{i \in I, i \neq i_0 } \R^+ [\Gamma_i]. 
$$ 
By Lemma \ref{lemmaclosure} the cone $V$ is closed and by Theorem \ref{theoremNApreliminary} we have
$$
\NAX = V + \R^+ [\Gamma_{i_0}]
$$
By \cite[Lemma 6.7(d)]{Deb01}
there exists a linear form on $N_1(X)$ which vanishes on $R$ and is positive on
$V \setminus \{0\}$.  This linear form gives the class $\alpha$ by Proposition~\ref{dual}.
\end{proof}

\subsection{Divisorial rays}

The following proposition is a particular case of \cite[Thm.2]{AV84} and a generalisation of the well-known
theorem of Grauert  \cite{Gra62}.

\begin{proposition} \label{propositioncontractdivisor}
Let $X$ be a normal compact complex space, and let $S$ be a prime divisor that is $\Q$-Cartier
of Cartier index $m$.
Suppose that $S$ admits a morphism with connected fibres $\holom{f}{S}{B}$
such that $\sO_S(-mS)$ is $f$-ample. Then there exists a bimeromorphic
morphism $\varphi: X \to Y$ 
to a normal compact  complex space $Y$ such that
$\varphi|_S=f$ and $\varphi|_{X \setminus S}$ is an isomorphism onto $Y \setminus B$.
\end{proposition}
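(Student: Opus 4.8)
The plan is to obtain the contraction as a direct application of the Ancona--Van Tan criterion \cite[Thm.2]{AV84}, which is the relative generalisation of Grauert's blow-down theorem \cite{Gra62} (the latter being the case where $B$ is a point): given a compact divisor $S$ in a reduced complex space $X$ together with a proper surjective map $f \colon S \to B$ with connected fibres, it produces a contraction of $S$ along $f$ inside $X$, provided the normal bundle of $S$ is negative relative to $f$. The work thus consists of two steps: reconciling the $\Q$-Cartier hypothesis with the relative negativity required, and checking that the resulting space $Y$ is normal and compact. I expect the only genuinely delicate point to be the first of these; the deep analytic construction (a relatively $1$-convex neighbourhood of $S$ over $B$, and the complex-space structure on the quotient collapsing the fibres of $f$) is entirely contained in \cite{AV84}.

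First I would translate the positivity hypothesis. Since $S$ is $\Q$-Cartier of index $m$, the divisor $mS$ is Cartier, so $\sO_X(-mS)$ is a line bundle and its restriction $\sO_S(-mS)=\sO_X(-mS)|_S$ is an honest line bundle, which is $f$-ample by assumption. By definition this means that the $\Q$-line bundle $\sO_S(-S)$ is $f$-ample, i.e. $\sO_S(S)$ is $f$-negative in the sense required by \cite[Thm.2]{AV84}. Applying that theorem to the pair $(S \subset X,\ f \colon S \to B)$ yields a proper bimeromorphic morphism $\varphi \colon X \to Y$ onto a complex space $Y$, an identification of $B$ with the image $\varphi(S) \subset Y$, and the properties $\varphi|_S = f$ and $\varphi|_{X \setminus S}$ an isomorphism onto $Y \setminus B$, with $\sO_Y = \varphi_* \sO_X$. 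As the construction is local over $B$, one may take a finite Stein cover of the compact base $B$ and glue; the gluing is canonical because $\varphi$ is forced to collapse exactly the fibres of $f$.

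It then remains to read off the properties asserted in the statement. Compactness of $Y$ is immediate, since $Y = \varphi(X)$ is the continuous image of the compact space $X$. For normality I would use that $\sO_Y = \varphi_* \sO_X$ together with the normality of $X$: a weakly holomorphic function $h$ on an open set $W \subset Y$ pulls back to a locally bounded holomorphic function on a dense open subset of $\varphi^{-1}(W)$, hence extends, by normality of $X$, to a section of $\sO_X(\varphi^{-1}(W)) = (\varphi_* \sO_X)(W) = \sO_Y(W)$ restricting to $h$. Thus every weakly holomorphic function on $Y$ is holomorphic, so $Y$ is normal. Finally $\varphi$ is bimeromorphic because it is an isomorphism away from the prime divisor $S$, and the remaining clauses $\varphi|_S = f$ and $\varphi|_{X \setminus S}$ an isomorphism onto $Y \setminus B$ are already part of the output of \cite{AV84}.
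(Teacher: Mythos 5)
Your overall strategy coincides with the paper's: both obtain the contraction from \cite[Thm.2]{AV84}. The gap is in the verification of that theorem's hypotheses, which you have partly mis-stated. The input to \cite[Thm.2]{AV84} is not ``a divisor with $f$-negative normal bundle'' but an ideal sheaf $\sI \subset \sO_X$ for which (i) $\sI/\sI^2$ is an $f$-ample sheaf on the subspace it defines, and (ii) the restriction maps $f_*(\sO_X/\sI^{k+1}) \to f_*(\sO_X/\sI^k)$ are surjective for all $k$. Your proposal addresses neither point. For (i), you cannot feed the reduced divisor $S$ into the theorem: at the finitely many points where $S$ fails to be Cartier its ideal sheaf is not invertible, $\sI_S/\sI_S^2$ is not a line bundle, and the phrase ``the $\Q$-line bundle $\sO_S(-S)$ is $f$-negative in the sense required by \cite[Thm.2]{AV84}'' has no meaning inside that theorem, which knows nothing of $\Q$-line bundles. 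The paper's fix is to take $\sI = \sO_X(-m'S)$ for a suitable multiple $m'$ of $m$, i.e. to contract a thickening of $S$ with the same support (so the conclusion is unaffected); then $\sI/\sI^2$ restricts to the honest $f$-ample line bundle $\sO_S(-m'S)$.

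Condition (ii) is entirely absent from your write-up, and it is the reason one must pass to a \emph{sufficiently large} multiple $m'$ rather than just to $m$: $f$-ampleness of $\sO_S(-mS)$ gives $R^1 f_*(\sO_S(-kmS))=0$ only for $k$ large, whereas one needs $R^1 f_*(\sI^k/\sI^{k+1}) = R^1 f_*(\sO_S(-km'S)) = 0$ for \emph{all} $k \in \N$; relative Serre vanishing provides this once $m'$ is large, and the long exact sequence of $0 \to \sI^k/\sI^{k+1} \to \sO_X/\sI^{k+1} \to \sO_X/\sI^k \to 0$ then yields the required surjectivity. Without (ii) the cited theorem does not apply. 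Your remarks on compactness and normality of $Y$ are fine but were never the delicate point.
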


\begin{proof} Since $\sO_S(-mS)$ is $f$-ample there exists a multiple $m'$ of $m$ such that
we have
$$
R^1 f_*(\sO_S(-km'S)) = 0 $$
for all $k \in \N$.
Let $\sI \subset \sO_X$ be the ideal sheaf corresponding to the Cartier divisor $-m'D$, then 
$\sI/\sI^2|_S$ is an $f$-ample line bundle on $S$. 
Since we have
$$ 
R^1 f_*(\sI^k/\sI^{k+1}) = R^1 f_*(\sO_S(-km'S)) = 0
$$
for all $k \in \N$, the natural morphisms
$$ f_*(\sO_X/\sI^{k+1}) \to f_*(\sO_X/\sI^k) $$
are surjective for all $k \in \N$. Thus we can apply \cite[Thm.2]{AV84} to conclude.
\end{proof}

\begin{lemma} \label{lemmairreducible} 
Suppose that the extremal ray $R$ is divisorial.
We set 
$$ 
S = 
\bigcup_{C \subset X, [C] \in R} C.
$$
Then $S$ is an irreducible Moishezon surface and 
$ S \cdot C < 0$
for all curves $C$ with $[C] \in R$.
\end{lemma}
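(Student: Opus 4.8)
The plan is to identify $S$ with one of the surfaces $S_j$ from the divisorial Zariski decomposition \eqref{Bdecomposition} of $K_X$, and to deduce everything from the single inequality $S_j \cdot \Gamma_{i_0} < 0$. First I would produce a moving curve in the ray. Since $R$ is divisorial, by definition there is a curve $C_0$ with $[C_0] \in R$ that is not very rigid; as $[\Gamma_{i_0}] \in \NEX$ and $\NEX \subseteq \NAX$, the ray $R$ is also extremal in $\NEX$, so applying Lemma \ref{lemmarationalrepresentative}~a) to $R = \R^+ [C_0]$ yields a curve $C$ with $[C] \in R$ and $\dim_C \Chow{X} > 0$. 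Because $R$ is $K_X$-negative we have $K_X \cdot C < 0$, so Lemma \ref{lemmabasic} applies: it produces a unique surface $S_j$ from \eqref{Bdecomposition} such that the covering family of $C$ sweeps out $S_j$, and its proof (equivalently, \eqref{canonicaldegree} together with adjunction $K_{S_j} = (K_X+S_j)|_{S_j}$) gives the key inequality $S_j \cdot C < 0$.

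Next I would propagate this negativity across the entire ray. As $X$ is K\"ahler, every curve has strictly positive degree against a fixed K\"ahler form, so no curve class vanishes in $N_1(X)$; hence any curve $C'$ with $[C'] \in R$ satisfies $[C'] = \lambda [\Gamma_{i_0}]$ with $\lambda > 0$. Intersection with the $\Q$-Cartier divisor $S_j$ is a linear form on $N_1(X)$, and $S_j \cdot C < 0$ with $[C]$ a positive multiple of $[\Gamma_{i_0}]$ forces $S_j \cdot \Gamma_{i_0} < 0$; therefore $S_j \cdot C' = \lambda\,(S_j \cdot \Gamma_{i_0}) < 0$ for \emph{every} curve $C'$ with $[C'] \in R$. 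Since $S_j$ is an irreducible effective divisor, a curve meeting it negatively cannot be disjoint from its support and must be a component of it, hence contained in $S_j$. This shows $S \subseteq S_j$ and records the inequality $S_j \cdot C' < 0$, which becomes the claimed $S \cdot C' < 0$ once $S = S_j$ is established below.

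For the reverse inclusion I would note that the deformations $(C_t)$ of $C$ all have the same class $[C_t] = [C] \in R$ in $N_1(X)$, so each general $C_t$ is one of the curves forming $S$; since these deformations cover $S_j$ by Lemma \ref{lemmabasic}, we obtain $S = S_j$. In particular $S$ is irreducible, and because the covering curves satisfy $K_X \cdot C_t < 0$ the restriction $K_X|_{S_j}$ is not pseudoeffective, so $S_j$ is projective (and uniruled) by Lemma \ref{lemmasurfaces}. Combined with $S \cdot C' < 0$ for all curves $C'$ with $[C'] \in R$, this gives the statement.

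I expect the difficulty here to be organisational rather than substantial: the genuine input --- the deformation theory of the moving curve and the surface classification forcing projectivity --- is already isolated in Lemmas \ref{lemmabasic}, \ref{lemmasurfaces} and \ref{lemmarationalrepresentative}. The only delicate point is the set-theoretic passage from ``$S$ contains the Zariski-dense family $(C_t)$'' to ``$S = S_j$''; this causes no trouble once $S \subseteq S_j$ is known and $S_j$ is irreducible of dimension two, so that the covering family forces equality.
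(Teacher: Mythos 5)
Your proposal is correct and follows essentially the same route as the paper: produce a non-rigid curve in $R$ via Lemma \ref{lemmarationalrepresentative}~a), identify the surface $S_j$ with $S_j \cdot C < 0$ via Lemma \ref{lemmabasic}, propagate the negativity to every curve in the ray by linearity, and invoke Lemma \ref{lemmasurfaces} for projectivity. The only difference is that you spell out the two inclusions behind $S = S_j$ (and the extremality of $R$ in $\NEX$) more explicitly than the paper does, which is harmless.
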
 

\begin{proof}  
By definition the extremal ray $R$ contains a class $[C]$ with $C$ a curve that is not very rigid.
By Lemma \ref{lemmarationalrepresentative},a) we may suppose that $\dim_C \Chow{X}>0$.
By Lemma \ref{lemmabasic} there exists a unique surface $S_j$ from the divisorial
Zariski decomposition \eqref{Bdecomposition} such that $C$ and its deformations are contained in $S_j$,
moreover we have $S_j \cdot C<0$.
  
Let now $C'$ be any curve such that $[C'] \in R$. Then we have $[C']=\lambda [C]$ for some $\lambda \in \Q^+$, hence
$$
S_j \cdot C' = \lambda S_j \cdot C < 0.
$$
Thus we have $C' \subset S_j$ and $S=S_j$. The Moishezon property of $S$ is shown in Lemma~\ref{lemmasurfaces}. 
\end{proof} 

\begin{notation} {\rm 
Suppose that the extremal ray $R = \R^+ [\Gamma_{i_0}]$ is divisorial, and let $S$ be the surface from Lemma \ref{lemmairreducible}.
Let $\nu: \tilde S \to S \subset X$ be the normalisation; then $\nu^*(\alpha)$ is a nef class on $\tilde S$ and we may consider the nef reduction
$$ 
\tilde f: \tilde S \to \tilde B$$
with respect to $\nu^*(\alpha)$, cf. \cite[Thm.2.6]{8authors}. 
By Lemma \ref{lemmarationalrepresentative},a) there exists a curve $C \subset X$ such that $[C] \in R$ and $\dim_C \Chow{X}>0$.
Since $S \cdot C<0$ by Lemma \ref{lemmairreducible}, the
deformations  $(C_t)_{t \in T}$ of $C$ induce a
dominating family $(\tilde C_t)_{t \in T'}$ of $\tilde S$ such that $\nu^*(\alpha) \cdot \tilde C_t=0$. By definition of the nef reduction
this implies
$$ n(\alpha) := \dim \tilde B \in \{0,1\}.$$}
\end{notation}

\begin{corollary} \label{corollarypoint} 
Suppose that the extremal ray $R$ is divisorial and $n(\alpha) = 0$. Then the surface 
$S$ can be blown down to a point: there exists a bimeromorphic
morphism $\varphi: X \to Y$ 
to a normal compact  threefold $Y$ with $\dim \varphi(S) = 0$ such that $\varphi|_{X \setminus S}$ is an isomorphism onto $Y \setminus 0$.
\end{corollary}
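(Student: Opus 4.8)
The plan is to verify the hypotheses of Proposition \ref{propositioncontractdivisor} with base $B$ a point. Since $X$ is $\Q$-factorial, $S$ is $\Q$-Cartier of some index $m$, and if we knew that $\sO_S(-mS)$ is ample on $S$, then the constant map $f\colon S\to B=\{\mathrm{pt}\}$ has connected fibre $S$ and $\sO_S(-mS)$ is $f$-ample, so Proposition \ref{propositioncontractdivisor} would produce a bimeromorphic morphism $\varphi\colon X\to Y$ onto a normal compact complex space contracting $S$ to a point and isomorphic over $Y\setminus 0$; here $Y$ is a threefold because $\varphi$ is bimeromorphic. Thus the whole problem reduces to proving that $-S|_S$ is ample.

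The heart of the argument is to manufacture a K\"ahler class on $X$ out of the nef supporting class. Let $\alpha$ be the nef supporting class of Proposition \ref{propositionperp} and set $V:=\NAX_{K_X\geq 0}+\sum_{i\neq i_0}\R^+[\Gamma_i]$, so that $\alpha$ is strictly positive on $V\setminus\{0\}$ (Proposition \ref{propositionperp}) and $\NAX=V+R$ (Theorem \ref{theoremNApreliminary}). I claim that for a small rational $t>0$ the class
$$\beta:=\alpha-tS\in N^1(X)$$
is strictly positive on $\NAX\setminus\{0\}$. On $R\setminus\{0\}$ this holds for every $t>0$, because $\alpha\cdot r=0$ while $S\cdot r<0$ there, the latter since $S\cdot\Gamma_{i_0}<0$ by Lemma \ref{lemmairreducible}. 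On $V\setminus\{0\}$ it follows by compactness: on the slice $\{v\in V\mid\|v\|=1\}$ the function $v\mapsto\alpha\cdot v$ has a positive minimum $c$ while $|S\cdot v|\leq M$, so $\beta\cdot v>0$ once $t<c/M$. Writing any $z\in\NAX$ as $z=v+r$ with $v\in V$, $r\in R$ then gives $\beta\cdot z=\beta\cdot v+\beta\cdot r>0$ unless $z=0$. By Corollary \ref{corollarykaehlerform} the class $\beta$ is therefore a K\"ahler class on $X$.

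Next I would restrict to $S$ and bring in the hypothesis $n(\alpha)=0$. By Remark \ref{remarkkaehler} the restriction $\beta|_S$ is a K\"ahler class on $S$, so $(\beta|_S)^2>0$. The assumption $n(\alpha)=0$ means exactly that the nef reduction of $\nu^*\alpha$ contracts $\tilde S$ to a point, i.e. $\nu^*\alpha$ is numerically trivial on $\tilde S$; as $\nu$ is finite and birational this yields $\alpha|_S\cdot C=0$ for every curve $C\subset S$ (so every such $C$ lies in $R$, whence $S\cdot C<0$ by Lemma \ref{lemmairreducible}), together with $(\alpha|_S)^2=0$ and $\alpha|_S\cdot S|_S=0$. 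Expanding $\beta|_S=\alpha|_S-tS|_S$ and using these vanishings gives
$$(-S|_S)\cdot C=-S\cdot C>0\ \text{ for all curves }C\subset S,\qquad (-S|_S)^2=\tfrac1{t^2}(\beta|_S)^2>0.$$
By the Nakai--Moishezon criterion, applied on the normal projective surface $\tilde S$ (along the finite birational map $\nu$ ampleness of a line bundle descends), the line bundle $\sO_S(-mS)$ is ample, which is exactly what was needed to apply Proposition \ref{propositioncontractdivisor} and conclude.

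The hard part will be the passage from the global K\"ahler class $\beta$ on $X$ to honest ampleness of $-S|_S$ on the possibly singular, possibly non-normal surface $S$: one must use that a K\"ahler class restricts to a K\"ahler class (Remark \ref{remarkkaehler}), that $n(\alpha)=0$ genuinely forces $\nu^*\alpha$ to be numerically trivial, and that ampleness may be tested after the finite birational normalisation $\nu\colon\tilde S\to S$. The other delicate point is the positivity check that $\beta$ is strictly positive on all of $\NAX\setminus\{0\}$ rather than merely on the generating pieces $V$ and $R$ separately, which is what makes essential use of the decomposition $\NAX=V+R$ and the compactness of the relevant slice.
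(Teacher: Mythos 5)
Your proof is correct, and it shares the paper's overall skeleton --- reduce everything to the ampleness of $\sO_S(-mS)$ on the projective surface $S$ and then invoke Proposition \ref{propositioncontractdivisor} with $B$ a point, after using $n(\alpha)=0$ to conclude that every curve in $S$ has class in the ray $R$ and hence meets $S$ negatively --- but the ampleness itself is established by a genuinely different criterion. The paper simply observes that the image of $\NE{S}\to\NEX$ lies in the single ray $R$, so that $\sO_S(-mS)$ is positive on $\NE{S}\setminus 0$, and quotes Kleiman's criterion \cite[Thm.1.4.29]{Laz04a}. You instead manufacture a K\"ahler class $\beta=\alpha-tS$ on all of $X$ (by exactly the compactness-on-a-slice argument the paper itself deploys later, in Proposition \ref{propositionalphabig}, for the class $\alpha-(K_X+S)$), restrict it to $S$, and verify Nakai--Moishezon. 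Your route is longer, since it needs Corollary \ref{corollarykaehlerform} together with the vanishings $(\alpha|_S)^2=0$ and $\alpha|_S\cdot S|_S=0$; these do hold, most cleanly because $S$ is projective and uniruled (Lemma \ref{lemmasurfaces}), so $H^2(\hat S,\sO_{\hat S})=0$ on the resolution, every $(1,1)$-class is a divisor class, and a nef class killing all curves is actually zero in $N^1$ rather than merely ``trivial against curves'' --- a half-sentence you should add, since for a transcendental class the implication is not formal. What the detour buys is robustness: Nakai--Moishezon only tests positivity on subvarieties, so you never need to know that positivity on curve classes propagates to the closed cone $\NE{S}$, i.e.\ that $\NE{S}\setminus 0$ does not meet the kernel of $N_1(S)\to N_1(X)$, a point the paper's appeal to Kleiman glosses over. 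Both arguments are valid; the paper's is the shorter one.
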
 

\begin{proof} Since $n(\alpha) = 0$, the class $\nu^* (\alpha)$ is trivial on $\tilde S$ \cite[Thm.2.6]{8authors}. 
Hence $ \nu^*(\alpha) \cdot \tilde C = 0$ for all curves $ \tilde C \subset \tilde S,$
hence $\alpha \cdot C = 0$ for all curves $C \subset S$. 
By Proposition \ref{propositionperp} the class of every curve $C \subset S$ belongs to $R$
and $\alpha|_S \equiv 0$. 

Let $m$ be the Cartier index of $S$. By Lemma \ref{lemmairreducible} the divisor $-mS$ 
is strictly positive on the extremal ray $R$. Since $\alpha$ is strictly positive on
$$
\left( \NAX_{K_X \geq 0} + \sum_{i \in I, i \neq i_0 } \R^+ [\Gamma_i] \right) \setminus \{0\},
$$
there exists an $\varepsilon > 0$ such $\alpha - \varepsilon [mS]$ is strictly positive 
on $\NAX \setminus \{0\}$. By Corollary \ref{corollarykaehlerform} there exists a K\"ahler form 
$\omega$ on $X$ such that 
$$
\alpha - \varepsilon [mS] = [\omega].
$$
Since $\alpha|_S \equiv 0$ we obtain that the class of the Cartier divisor $[-mS|_S]$ 
is represented by the K\"ahler form $\frac{1}{\varepsilon} \omega$.
Thus $\sO_S(-mS)$ is ample.  We conclude with Proposition \ref{propositioncontractdivisor}.
\end{proof} 

\begin{lemma} \label{lemmacontractioncurve}
Suppose that the extremal ray $R$ is divisorial and $n(\alpha) = 1$. 
Then there exists a fibration $\holom{f}{S}{B}$ onto a curve $B$ such that
a curve $C \subset S$ is contracted if and only if $[C] \in R$.
\end{lemma}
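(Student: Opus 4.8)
The plan is to construct the fibration first on the minimal resolution, where intersection theory is available, and then push it down to the possibly non-normal surface $S$. Throughout I would write $\pi_j = \nu \circ \mu \colon \hat S_j \to S$ for the composition of the normalisation $\nu \colon \tilde S \to S$ and the minimal resolution $\mu \colon \hat S_j \to \tilde S$ (Subsection \ref{subsectionsurfaces}), and set $\hat\alpha := \pi_j^* \alpha$, a nef class on the smooth projective surface $\hat S_j$. By Lemma \ref{lemmarationalrepresentative},a) the ray $R$ contains a curve $C$ with $\dim_C \Chow{X} > 0$ whose deformations $(C_t)$ cover $S$; their strict transforms $(\hat C_t)$ form a covering family on $\hat S_j$ with $\hat\alpha \cdot \hat C_t = \alpha \cdot C_t = 0$, since $[C_t] \in R$ and $\alpha$ is a nef supporting class (Proposition \ref{propositionperp}).

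First I would show that $\hat C_t^2 = 0$. Because $(\hat C_t)$ is a covering family, a general member meets any fixed irreducible curve non-negatively, so $\hat C_t$ is nef and in particular $\hat C_t^2 = \hat C_t \cdot \hat C_{t'} \geq 0$ for $t' \neq t$. If $\hat C_t^2 > 0$ then $\hat C_t$ is big and nef, and $\hat\alpha \cdot \hat C_t = 0$ with $\hat\alpha$ nef would force $\hat\alpha \equiv 0$ by the Hodge index theorem; this contradicts $n(\alpha) = 1$, the nef dimension being invariant under the birational pullback $\pi_j$. Hence $\hat C_t^2 = 0$, so distinct general members are disjoint and the covering family is the set of fibres of a fibration $\hat f \colon \hat S_j \to \hat B$ onto a smooth projective curve. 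Applying the Hodge index theorem to the classes $\hat\alpha$ and $[\hat F]$ (a general fibre), both isotropic and mutually orthogonal, yields $\hat\alpha \equiv \lambda [\hat F]$ with $\lambda > 0$; consequently an irreducible curve $\Gamma \subset \hat S_j$ is contracted by $\hat f$ if and only if $\hat\alpha \cdot \Gamma = 0$.

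Next I would descend $\hat f$ to $S$ in two stages. Every $\mu$-exceptional curve $E$ satisfies $\hat\alpha \cdot E = \alpha \cdot (\pi_j)_* E = 0$, hence lies in a fibre of $\hat f$; by rigidity $\hat f$ factors through $\mu$ to give a morphism $\tilde f \colon \tilde S \to \hat B$ on the normalisation (this recovers the nef reduction of the Notation, so $\hat B = \tilde B$). The genuinely delicate point, and the main obstacle, is the passage across $\nu$: since $S$ may be non-normal, a morphism on $\tilde S$ need not descend to $S$, and indeed the two branches of $\tilde S$ over a general point of the non-normal locus $\Delta$ can a priori be sent to different points of $\hat B$, in which case images of distinct fibres of $\tilde f$ meet along $\Delta$ and do not form a fibration. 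To handle this I would form a Stein-type quotient: let $\sim$ be the equivalence relation on $\hat B$ generated by $b_1 \sim b_2$ whenever the fibres $\tilde f^{-1}(b_1)$ and $\tilde f^{-1}(b_2)$ share a point of $\nu^{-1}(\Delta)$ lying over a common point of $\Delta$. Because a general fibre of $\tilde f$ meets the curve $\nu^{-1}(\Delta)$ in only finitely many points, each $b \in \hat B$ is equivalent to at most finitely many others, so $B := \hat B / \! \sim$ is again a smooth curve with $\hat B \to B$ finite. The pullbacks to $\tilde S$ of functions on $\hat B$ that descend to $S$ are exactly those constant on $\sim$-classes, and these define the desired morphism $f \colon S \to B$ with connected fibres, each fibre being the connected union of images $\nu(\tilde f^{-1}(b'))$ over a $\sim$-class.

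Finally I would verify the stated equivalence. For a curve $C \subset S$ with preimage $\tilde C$ on $\tilde S$, the curve $C$ is $f$-contracted if and only if $\tilde C$ is $\tilde f$-contracted (as $\nu$ and $\hat B \to B$ are both finite), if and only if $\nu^* \alpha \cdot \tilde C = 0$ by the characterisation established on the resolution, if and only if $\alpha \cdot C = 0$, which by Proposition \ref{propositionperp} means precisely $[C] \in R$. I expect the self-intersection computation and the construction of $\hat f$ on $\hat S_j$ to be routine; the real work, and the step most likely to require care, is the descent across the normalisation together with the verification that the quotient base $B$ is a curve and that $f$ is a genuine morphism.
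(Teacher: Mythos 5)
Your construction of the fibration $\hat f$ on the minimal resolution is sound, but it essentially reproduces the nef reduction $\tilde f\colon \tilde S \to \tilde B$ that the paper already has at its disposal from the Notation preceding the lemma; the actual content of the lemma is the descent from $\tilde S$ to the possibly non-normal surface $S$, and this is where your argument has a genuine gap. You define $B$ as the quotient of $\hat B$ by the equivalence relation \emph{generated} by ``$b_1\sim b_2$ whenever $\tilde f^{-1}(b_1)$ and $\tilde f^{-1}(b_2)$ meet $\nu^{-1}(\Delta)$ over a common point of $\Delta$'', and you assert that each class is finite because a fibre meets $\nu^{-1}(\Delta)$ in finitely many points. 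That only bounds the number of points \emph{directly} related to a given $b$; the transitive closure can be infinite. Concretely, if the conductor has two branches $\Delta_1,\Delta_2$ in $\tilde S$, both finite over $\Delta$ and neither contained in a fibre of $\tilde f$, the gluing induces a correspondence on $\hat B$ whose orbits may be infinite (iterate a nontrivial self-map of $\hat B$), in which case no quotient curve exists. Nothing in your proposal rules this out, so the ``Stein-type quotient'' step does not go through as written.

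The paper avoids this entirely by a numerical argument that is missing from your proposal. Taking a general fibre $\tilde C$ of $\tilde f$ with image $C=\nu(\tilde C)$, Lemma \ref{lemmabasic} gives $K_S\cdot C<K_X\cdot C<0$; choosing $\tilde C$ so that $C\subset X_{\nons}$ makes $K_X\cdot C$ and $S\cdot C$ integers, whence $K_S\cdot C\le -2$, while \eqref{inequalitycanonical} together with $\deg K_{\tilde C}\ge -2$ forces $-2\le K_{\tilde S}\cdot\tilde C\le K_S\cdot C\le -2$. Equality in \eqref{inequalitycanonical} holds only when $\tilde C$ avoids the conductor, so the general fibre of $\tilde f$ lies in $S_{\nons}$ and in particular misses the non-normal locus completely. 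With $\sO_S(C)$ invertible, $C^2=0$ and $h^0(\tilde S,\sO_{\tilde S}(mC))\to\infty$, the paper concludes $\kappa(S,\sO_S(C))=1$ and defines $f$ directly by the linear system of a multiple of $C$ on $S$, with no quotient construction needed. If you want to salvage your approach, you must first prove this ``general fibre avoids the non-normal locus'' statement --- which is exactly the adjunction computation above --- and at that point the quotient becomes unnecessary.
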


\begin{proof}
Let $\tilde C$ be a general fibre of the nef reduction $\tilde f: \tilde S \to \tilde B$ with respect to $\nu^*(\alpha)$, 
then we have $\nu^*(\alpha) \cdot \tilde C=0$. By definition of $\alpha$ this implies that 
for $C:=\nu(\tilde C)$ we have $[C] \in R$.
Thus we have $K_X \cdot C<0$ and the cycle space $\Chow{X}$ has positive dimension in $C$, so by Lemma \ref{lemmabasic} we have
$$
K_S \cdot C<K_X \cdot C<0.
$$ 
The normalisation $\nu$ being finite and $X$ having only finitely many singular points, we can choose $\tilde C$ such that
the following three conditions hold:
\begin{itemize}
\item $C \subset X_{\nons}$, 
\item $C \not\subset S_{\sing}$, and
\item $\tilde C \subset \tilde S_{\nons}$,
\end{itemize}
By the first property the intersection numbers $K_X \cdot C<0$ and $S \cdot C<0$ are integers, hence $K_S \cdot C \leq -2$. 
By the second property and \eqref{inequalitycanonical} we have
$$
K_{\tilde S} \cdot \tilde C \leq K_S \cdot C \leq -2
$$
with equality if and only if $C \subset S_{\nons}$ and $K_X \cdot C=-1$ and $S \cdot C=-1$.
Since $\tilde C$ is a fibre of the fibration $\tilde f$ contained in the smooth locus of $\tilde S$, 
we have
$$ 
-2 \leq \deg K_{\tilde C} = K_{\tilde S} \cdot \tilde C \leq -2.
$$
Thus $\tilde C=C$ is a rational curve contained in $S_{\nons}$ such that $K_S \cdot C=-2$. 
In particular the sheaf $\sO_S(C)$ is invertible. 
Consider the exact sequence
$$
0 \rightarrow \sO_S(mC) \rightarrow \nu_* \sO_{\tilde S}(mC) \rightarrow \sF \otimes  \sO_{\tilde S}(mC) \rightarrow 0,
$$ 
where $\sF$ is a coherent sheaf supported on the non-normal locus of $S$. 
Since $C \subset S_{\nons}$, we have  $\sF \otimes  \sO_{\tilde S}(mC) \simeq \sF$ for all $m \in \N$.
Since $h^0(\tilde S, \sO_{\tilde S}(mC))$ goes to infinity
as $m \rightarrow \infty$ we see that $\kappa(S, \sO_S(C))=1$. Since $C^2=0$ we see that some multiple of $C$ defines a fibration
$\holom{f}{S}{B}$ onto a curve $B$ such that we have a commutative diagram
$$
\xymatrix{
\tilde S  \ar[d]_{\tilde f} \ar[r]^{\nu} & S  \ar[d]^{f}
\\
\tilde B \ar[r] & B 
}
$$
\end{proof}

\begin{corollary} \label{corollarycurve}
Suppose that the extremal ray $R$ is divisorial and $n(\alpha) = 1$.
Let $\holom{f}{S}{B}$ be the fibration defined in Lemma \ref{lemmacontractioncurve}.
Then the surface $S$ can be contracted onto a curve:
there exists a bimeromorphic
morphism $\varphi: X \to Y$ 
to a normal compact threefold $Y$ such that
$\varphi|_S=f$ and $\varphi|_{X \setminus S}$ is an isomorphism onto $Y \setminus B$.
\end{corollary}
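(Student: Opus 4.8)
The plan is to deduce the statement directly from the contraction criterion Proposition~\ref{propositioncontractdivisor}, using the fibration $f: S \to B$ already produced in Lemma~\ref{lemmacontractioncurve}. Since $X$ is $\Q$-factorial and $S$ is a prime divisor, $S$ is $\Q$-Cartier; let $m$ be its Cartier index, so that $\sO_X(-mS)$ is a genuine line bundle and its restriction $\sO_S(-mS)$ is a line bundle on $S$. The fibration $f$ has connected fibres by construction (it is the Iitaka-type fibration associated with $\sO_S(C)$, or one passes to its Stein factorisation). Thus the only hypothesis of Proposition~\ref{propositioncontractdivisor} that requires work is the $f$-ampleness of $\sO_S(-mS)$.

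To verify this, first recall that for a morphism $f$ onto a curve a line bundle is $f$-ample if and only if it is ample on every fibre, and that a line bundle on a (possibly reducible or non-reduced) projective curve is ample if and only if it has strictly positive degree on each irreducible component. Hence it suffices to show that
$$
\deg_C \sO_S(-mS) = -m \, (S \cdot C) > 0
$$
for every irreducible component $C$ of every fibre of $f$. Now such a $C$ is contracted by $f$, so by the characterisation in Lemma~\ref{lemmacontractioncurve} we have $[C] \in R$; Lemma~\ref{lemmairreducible} then gives $S \cdot C < 0$, and the displayed inequality follows. Therefore $\sO_S(-mS)$ is $f$-ample.

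With all hypotheses in place, Proposition~\ref{propositioncontractdivisor} produces a bimeromorphic morphism $\varphi: X \to Y$ onto a normal compact complex space with $\varphi|_S = f$ and $\varphi|_{X \setminus S}$ an isomorphism onto $Y \setminus B$. Finally $Y$ is three-dimensional: $\varphi$ is bimeromorphic and restricts to an isomorphism of the dense open set $X \setminus S$ onto $Y \setminus B$, so $\dim Y = \dim X = 3$, and $Y$ is a normal compact threefold as required.

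The main obstacle is the $f$-ampleness check, and specifically the need to control \emph{every} irreducible component of \emph{every} fibre --- including the reducible or non-reduced special fibres --- rather than merely the general fibre. This is exactly what the ``if and only if'' in Lemma~\ref{lemmacontractioncurve} (a curve $C \subset S$ is contracted precisely when $[C] \in R$) buys us: it guarantees that the negativity $S \cdot C < 0$ of Lemma~\ref{lemmairreducible} applies uniformly to all components of all fibres. A secondary point to confirm is that $f$ indeed has connected fibres, which is needed for Proposition~\ref{propositioncontractdivisor} to apply.
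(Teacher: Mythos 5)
Your proposal is correct and follows essentially the same route as the paper: Lemma \ref{lemmacontractioncurve} supplies the fibration, Lemma \ref{lemmairreducible} gives $S \cdot C < 0$ for every curve with class in $R$ (hence $f$-ampleness of $\sO_S(-mS)$, which you verify in slightly more detail on the components of special fibres), and Proposition \ref{propositioncontractdivisor} concludes. The extra care you take with reducible fibres and connectedness is sound but not a different argument.
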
 

\begin{proof} 
Let $m$ be the Cartier index of $S$. By Lemma \ref{lemmacontractioncurve} the surface $S$
admits a fibration $f$ onto a curve contracting exactly the curves in $R$. By Lemma \ref{lemmairreducible}
the divisor $\sO_X(-mS)$ is ample on every curve in $R$, so its restriction $\sO_S(-mS)$ is $f$-ample.
Conclude with Proposition \ref{propositioncontractdivisor}.
\end{proof} 

\subsection{Small rays}

\begin{notation} \label{notationsmalllocus}{\rm
Suppose that the extremal ray $R = \R^+ [\Gamma_{i_0}]$ is small.
Set 
$$
C := \cup_{C_l \subset X, [C_l] \in R} C_l,
$$
then $C$ is a finite union of curves by Remark \ref{remarksmall}. We say that $C$ is contractible
if there exists a bimeromorphic
morphism $\varphi: X \to Y$ 
onto a normal threefold $Y$ with $\dim \varphi(C) = 0$ such that $\varphi|_{X \setminus C}$ is an isomorphism onto 
$Y \setminus \varphi(C)$. 
}
\end{notation}

\begin{proposition} \label{propositionalphabig}
Suppose that the extremal ray $R=\R^+ [\Gamma_i]$ is small, and let $\alpha$ be the nef supporting class
of $R$ defined in Proposition \ref{propositionperp}. 
Let $S \subset X$ be an irreducible surface. Then we have
$\alpha^2 \cdot S>0$.
\end{proposition} 

\begin{proof}[Proof of Proposition \ref{propositionalphabig}]
Since $\alpha$ is nef we have $\alpha^2 \cdot S \geq 0$. Arguing by contradiction we suppose that $\alpha^2 \cdot S = 0$.

By hypothesis the cohomology class $\alpha-K_X$ is positive on the extremal ray $R$, moreover we know 
by Proposition \ref{propositionperp} that $\alpha$ is positive on
$$
\left( \NAX_{K_X \geq 0} + \sum_{i \in I, i \neq i_0 } \R^+ [\Gamma_i] \right) \setminus \{0\}.
$$ 
Thus, up to replacing $\alpha$ by some positive multiple, we can suppose that $\alpha-K_X$ is
positive on $\NAX \setminus \{ 0 \}$. Since $X$ is a K\"ahler space this implies by Corollary \ref{corollarykaehlerform}
that $\alpha-K_X$ is a K\"ahler class $\omega$.

If $\alpha|_S=0$ we obtain
$-K_X|_S = \omega|_S$.
Thus the divisor $-K_X|_S$ is ample, in particular $S$
is projective and covered by curves. Since $\alpha|_S=0$, the classes of all these curves are in the small ray $R$,
a contradiction. 

Suppose now that $\alpha|_S \neq 0$. Then we have
$$
0 = \alpha^2 \cdot S = K_X \cdot \alpha \cdot S
+ \omega \cdot \alpha \cdot S 
$$ 
and 
$$
\omega \cdot \alpha \cdot S = \omega|_S \cdot \alpha|_S >0
$$ 
by the Hodge index theorem. 
Thus we obtain
\begin{equation} \label{help1}
K_X \cdot \alpha \cdot S < 0.
\end{equation}
In particular $K_X|_S$ is not pseudoeffective, the class $\alpha|_S$ being nef. Using the decomposition
\eqref{Bdecomposition} we deduce immediately that $S \cdot \alpha \cdot S < 0$.
In particular we have
\begin{equation} \label{help2}
(K_X+S) \cdot \alpha \cdot S<0.
\end{equation}
Let \holom{\pi}{S'}{S} be the composition of normalisation and minimal resolution (cf. Subsection \ref{subsectionsurfaces}). 
By \eqref{equationresolve} we have $K_{S'}+E= \pi^* K_S$ with $E$ an effective divisor. 
Since $E$ is effective and $\alpha$ is nef, the inequality \eqref{help2} now implies
\begin{equation} \label{help}
K_{S'} \cdot \pi^* \alpha|_S \leq K_S \cdot \alpha|_S < 0.
\end{equation}
Thus $K_{S'}$ is not pseudoeffective, hence $S'$ is uniruled and $H^2(S', \sO_{S'})=0$. 
In particular we can see the nef class $\pi^* \alpha$ as an $\R$-divisor on the projective surface $S'$. 
Since $\pi^* \alpha$ is nef and $S'$ is a surface, it is an element of
$\overline{\mbox{NM}}(S')$ the closure of the cone of movable curves. 
The extremal ray $R$ contains only the classes of finitely many curves, so $\pi^* \alpha$ is strictly
positive on every movable curve in $S'$.

Fix an ample divisor $A$ on $S'$. 
By \cite[Thm.1.3]{Ara10} for every $\varepsilon>0$ we have a decomposition
$$
\pi^* \alpha = C_{\varepsilon} + \sum \lambda_{i, \varepsilon} M_{i, \varepsilon}
$$
where $\lambda_{i, \varepsilon} \geq 0$, $(K_{S'}+\varepsilon A) \cdot C_\varepsilon \geq 0$ and the $M_{i, \varepsilon}$ are movable curves. Since $(\pi^* \alpha)^2=0$ and $\pi^* \alpha \cdot M_{i, \varepsilon}>0$ we see that
$\pi^* \alpha = C_{\varepsilon}$
for all $\varepsilon>0$. Passing to the limit we obtain $K_{S'} \cdot \pi^* \alpha \geq 0$, a contradiction to \eqref{help}.
\end{proof}

\begin{theorem} \label{theoremsmallray}
Suppose that the extremal ray $R=\R^+ [\Gamma_i]$ is small, and let
$C$ be the locus covered by curves in $R$ (cf. Notation \ref{notationsmalllocus}).
Then $C$ is contractible. 
\end{theorem}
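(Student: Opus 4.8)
The plan is to split the proof according to the divisorial Zariski decomposition \eqref{Bdecomposition} and to reduce the easy case to Proposition \ref{propositionsmall}. Since $R$ is small, $C$ is a finite union of irreducible curves by Remark \ref{remarksmall}, and every component has class in $R=\R^+[\Gamma_{i_0}]$. Pairing $K_X=\sum_{j=1}^r\lambda_j S_j+N(K_X)$ with $\Gamma_{i_0}$ and using $K_X\cdot\Gamma_{i_0}<0$, exactly one of two complementary situations occurs: either $S_j\cdot\Gamma_{i_0}<0$ for some $j$, or $S_j\cdot\Gamma_{i_0}\ge 0$ for all $j$ and hence $N(K_X)\cdot\Gamma_{i_0}<0$. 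In the first situation I would simply take $S:=S_j$, an integral surface with $S\cdot\Gamma_{i_0}<0$, and apply Proposition \ref{propositionsmall} to conclude that $C$ is contractible. All the remaining content is in the second situation $N(K_X)\cdot\Gamma_{i_0}<0$, the transcendental counterpart of Nakayama's argument \cite{Nak04}; note that here no prime divisor from \eqref{Bdecomposition} is negative on $R$, so neither Proposition \ref{propositionalphabig} nor Proposition \ref{propositionsmall} is directly available.

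So assume $N(K_X)\cdot\Gamma_{i_0}<0$, and fix a nef supporting class $\alpha$ for $R$ as in Proposition \ref{propositionperp}, so that $\alpha\cdot\Gamma_{i_0}=0$ while $\alpha$ is strictly positive on the rest of $\NAX$. The central step is to show that $\alpha$ is the pushforward of a Kähler class under a modification. Using the Demailly--P\v aun description of the K\"ahler cone \cite{DP04} together with Boucksom's theory of transcendental Zariski decompositions \cite{Bou04}, and the fact that $N(K_X)\cdot\Gamma_{i_0}<0$ concentrates the degeneracy of $\alpha$ on the one-dimensional set $C$, I would produce a modification $\mu:\tilde X\to X$ with $\tilde X$ a compact K\"ahler manifold and a K\"ahler form $\tilde\alpha$ on $\tilde X$ with $\mu_*\tilde\alpha=\alpha$. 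Equivalently $\mu^*\alpha=\{\tilde\alpha\}+[E]$ for an effective $\mu$-exceptional $\R$-divisor $E$ whose support lies over $C$; in particular $\alpha$ is nef and big. This is the analogue of the fixed-component-free linear system $|m(N(K_X)+\varepsilon H)|$ of the projective argument, with $\tilde\alpha$ playing the role of the movable positive part and $E$ the role of the base locus along $C$.

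It then remains to convert the positivity of the triple $(\tilde X,\tilde\alpha,E)$ into an ideal sheaf. Analysing how $E$ meets the $\mu$-fibres over the components $C_l$ of $C$, I would construct a coherent ideal sheaf $\sI\subset\sO_X$ with support exactly $C$ and such that the conormal sheaf $\sI/\sI^2$ is ample along every $C_l$: the K\"ahler positivity of $\tilde\alpha$ supplies ampleness in the directions transverse to the exceptional locus, just as the two transverse ample directions were produced in Lemma \ref{lemmacontractible} and Proposition \ref{propositionsmall}. With such an $\sI$ in hand, $C$ is contractible by Grauert's criterion \cite{Gra62} in the form of \cite[Cor.3]{AV84}, exactly as in Lemma \ref{lemmacontractible}.

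The main obstacle is precisely this final construction in the case $N(K_X)\cdot\Gamma_{i_0}<0$. Producing the modification $\mu$ with $\mu_*\tilde\alpha=\alpha$ is already a nontrivial application of \cite{DP04} and \cite{Bou04}, but the genuinely delicate point is the descent: unlike the projective situation, where two general members of a fixed-part-free system cut out a local complete intersection curve with ample conormal bundle in one step, here one must track the positivity of the exceptional divisor $E$ over each component of $C$ through the singular and non-normal loci of the ambient geometry and verify that the resulting $\sI/\sI^2$ is ample---not merely nef---on all of $C$ simultaneously, so that \cite{AV84} genuinely applies.
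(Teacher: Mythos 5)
Your architecture coincides exactly with the paper's: the same case split on the divisorial Zariski decomposition, the first case disposed of by Proposition \ref{propositionsmall}, and the second case handled by producing a modification $\holom{\mu}{\tilde X}{X}$ with a K\"ahler class $\tilde \alpha$ satisfying $\mu_* \tilde\alpha = \alpha$, then converting the $\mu$-exceptional divisor into an ideal sheaf with ample conormal sheaf and invoking \cite[Cor.3]{AV84}. However, the step you leave as an assertion --- ``I would produce a modification $\mu$ with $\mu_*\tilde\alpha=\alpha$'' --- is exactly where the hypothesis $N(K_X)\cdot \Gamma_{i_0}<0$ does its work, and your stated reason for it (that the negativity ``concentrates the degeneracy of $\alpha$ on $C$'') is not the actual mechanism. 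The correct argument is: since $\alpha$ vanishes on $R$ and $N(K_X)\cdot R<0$, the class $-N(K_X)+\alpha$ is positive on $R$, and after rescaling $\alpha$ it is strictly positive on all of $\NAX\setminus\{0\}$, hence a K\"ahler class by Corollary \ref{corollarykaehlerform}. Thus $\alpha=N(K_X)+[\omega]$ with $\omega$ K\"ahler, so $\alpha$ is the sum of a modified nef class and a K\"ahler class, i.e. a \emph{modified K\"ahler class}, and it is Boucksom's \cite[Prop.2.3]{Bou04} applied to such a class (not a general appeal to \cite{DP04}) that yields $\mu$ and $\tilde\alpha$; bigness of $\alpha$ is obtained separately, from $\alpha^3\geq \alpha\cdot[\omega]^2>0$ via the Hodge index theorem. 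Without identifying $\alpha$ as a modified K\"ahler class, the existence of $\mu$ is not justified.

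A second, smaller inaccuracy: you claim the support of $E$ in $\mu^*\alpha=\tilde\alpha+E$ lies over $C$. A priori this is false --- the non-K\"ahler locus of $\alpha$ may contain curves $B$ with $\alpha\cdot B>0$. The paper circumvents this by showing $-E$ is $\mu$-ample on \emph{all} fibres (using $-E|_{D_B}=\tilde\alpha|_{D_B}-(\mu^*\alpha)|_{D_B}$), perturbing to an integral $\mu$-ample $\Q$-divisor $E'$ with $-E'$ ample on the fibres over the $C_l$, and then extracting ampleness of $\sK_m=\mu_*\sO_{\tilde X}(-mE')$ along the $C_l$ via relative Serre vanishing and \cite[Thm.3.1]{Anc82}, before saturating to an ideal sheaf $\sI$ supported on $C$. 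Your final paragraph correctly anticipates that this descent is the delicate point, but as written the proposal contains the statement of the difficulty rather than its resolution.
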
 

\begin{proof}[Proof of Theorem \ref{theoremsmallray}] 
Let $\alpha  \in N^1(X)$ be as in Proposition~\ref{propositionperp}. 
By definition of the class $\alpha$, the class  $-K_X+\alpha$ is positive on the extremal ray $R$.
Since $\alpha$ is strictly positive on 
$$
\left( \NAX_{K_X \geq 0} + \sum_{i \in I, i \neq i_0 } \R^+ [\Gamma_i] \right) \setminus \{0\},
$$ 
we can suppose, up to replacing $\alpha$ by some positive multiple, that $-K_X+\alpha$ 
is strictly positive on this cone. Hence $-K_X+\alpha$ is strictly positive on $\NAX$.
Thus we know by Corollary \ref{corollarykaehlerform} that $-K_X+\alpha$ is a K\"ahler class. Since $\alpha$ is the sum of a pseudoeffective class and a K\"ahler class, 
it is big (i.e. lies in the interior of the cone generated by pseudoeffective classes). 
Since $\alpha$ is nef this implies that $\alpha^3>0$.

Let $\pi: \hat X \to X$ be a desingularisation. 
Since $\alpha$ is nef and big, the pull-back
$\pi^* \alpha$ is nef and big. By \cite[Thm.1.1]{CT13} the
non-K\"ahler locus $E_{nK}(\pi^* \alpha)$ is equal to the null-locus, i.e. we have
$$
E_{nK}(\pi^* \alpha) = \cup_{(\pi^* \alpha)|_Z^{\dim Z}=0} Z, 
$$
where the union runs over all the subvarieties of $\hat X$.
If $Z \subset X$ is a surface such that $(\pi^* \alpha)|_Z^{2}=0$ then it follows from the projection formula
and Proposition \ref{propositionalphabig} that $\dim \pi(Z) \leq 1$.
Thus we see that $E_{nK}(\pi^* \alpha)$
is a finite union of $\pi$-exceptional surfaces and curves.
Since
$$
E_{nK}(\alpha) \subset \pi(E_{nK}(\pi^* \alpha)),
$$
and $X_{\sing}$ consists of finitely many points,
we finally obtain that 
$$
E_{nK}(\alpha) = \cup_{\alpha \cdot B=0} B = C.
$$

Recall that a cohomology class is modified K\"ahler \cite[Defn.2.2]{Bou04} if it contains a K\"ahler current
$T$ such that for every prime divisor $D \subset X$ the Lelong number in the generic point of $D$ is equal to zero.
Since the non-K\"ahler locus of $\alpha$ does not contain any divisor, it is a modified K\"ahler class. Thus by 
\cite[Prop.2.3]{Bou04}\footnote{Proposition 2.3. in \cite{Bou04} is for compact complex manifolds, but the proof goes
through without changes for a variety with isolated singularities. A different way to obtain the decomposition is to 
take a resolution of singularities \holom{\nu}{X'}{X} and consider the nef and big class $\nu^* 
\alpha$. The non-K\"ahler locus is the union of the $\nu$-exceptional locus and the strict transforms of the curves
in the non-K\"ahler locus of $\alpha$. By \cite[Thm.3.1.24]{Bou02} there exists a modification
$\holom{\mu'}{\tilde X}{X'}$ and a K\"ahler class $\tilde \alpha$ on $\tilde X$ such that $(\mu')^* \nu^* \alpha= \tilde \alpha + E$.
Then $\mu:=\nu \circ \mu'$ has the stated properties.} there exists a modification
$\holom{\mu}{\tilde X}{X}$ and a K\"ahler class $\tilde \alpha$ on $\tilde X$ such that $\mu_* \tilde \alpha= \alpha$.
Since $\tilde \alpha - \mu^* \alpha$ is $\mu$-nef, we obtain
\begin{equation} \label{equationbig}
\mu^* \alpha = \tilde \alpha + E  
\end{equation}
with $E$ a $\mu$-exceptional $\R$-divisor such that
$$
\mu(\supp E)= E_{nK}(\alpha). 
$$
Since $\mu_*(E)=0$ and $-E$ is $\mu$-nef, the divisor $E$ is effective:
the usual proof of the negativity lemma \cite[Lemma 3.39]{KM98} \cite[3.6.2]{BCHM10} works since the statement is local on the
base and $\mu$ is a projective morphism.
Note also that since $-E$ is $\mu$-ample and antieffective, its support is equal to the exceptional locus of $\mu$.

Let now $Z \subset X$ be a connected component of the curve $C$, and set
$D_Z$ for the support of $\fibre{\mu}{Z}$.
Then we have $\alpha|_{Z} \equiv 0$, so by \eqref{equationbig} we see that
$$
-E|_{D_Z} = \tilde \alpha|_{D_Z}
$$
is ample. Since ampleness is an open property we can slightly perturb the coefficients of $E$ to obtain an effective $\Q$-divisor $E' \subset \tilde X$ such that $-E'|_{D_Z}$ is ample.
Up to taking a positive multiple we can suppose that $E'$ has integer coefficients.
Since $Z$ is a connected component of $E_{nK}(\alpha)$, the divisor $D_Z$ is a connected component of
the support of $E'$. Thus we can write $E' = \sum m_i D_i + R$ where $D_i \subset D_Z$ is an irreducible component and $R$ is disjoint from $D_Z$. Since $-E|_{D_Z} = - (\sum m_i D_i)_{D_Z}$ is ample and a line bundle on a compact complex space is ample if and only if it is ample on the reduction, we finally see that
$$
N^*_{\sum m_i D_i/\hat X} = \sO_{\sum m_i D_i}(-\sum m_i D_i)
$$
is ample. By Grauert's criterion \cite{Gra62} there exists a morphism $\holom{\psi}{\hat X}{Y_Z}$ contracting
the divisor $\sum m_i D_i$ onto a point. By the rigidity lemma \cite[Lemma 4.1.13]{BS95} there exists a morphism
\holom{\varphi_Z}{X}{Y_Z} such that $\psi = \varphi_Z \circ \mu$. The contraction $\varphi$ is obtained by
applying this construction to all the connected components of $C$.
\end{proof}

\section{Running the MMP- Proof of Theorems \ref{theoremminimalmodel}  and \ref{theoremNA}}

The following statement is well-known to specialists, we include the proof for the convenience of the reader.

\begin{proposition} \label{propositioncontraction}
Let $X$ be a  normal $\Q$-factorial  compact K\"ahler space with at most terminal singularities.
Let $\R^+ [\Gamma_i]$ be a $K_X$-negative extremal ray in $\NAX$.
Suppose that there exists a morphism \holom{\varphi}{X}{Y} onto a normal complex space $Y$
such that $-K_X$ is $\varphi$-ample and
a curve $C \subset X$ is contracted if and only if $[C] \in \R^+ [\Gamma_i]$.
\begin{enumerate}
\item We have exact sequences
\begin{equation} \label{exactH2}
0 \rightarrow H^2(Y, \R) \stackrel{\varphi^*}{\rightarrow} H^2(X, \R) \stackrel{[L] \mapsto L \cdot \Gamma_i}{\longrightarrow} \R \rightarrow 0
\end{equation}
and
\begin{equation} \label{exactN1}
0 \to N^1(Y)  \stackrel{\varphi^*}{\rightarrow}  N^1(X) \stackrel{[L] \mapsto L \cdot \Gamma_i}{\longrightarrow} \R \to 0.
\end{equation}
In particular we have $b_2(X) = b_2(Y) + 1$. 
\item We have an exact sequence
\begin{equation} \label{exactPic}
0 \rightarrow \pic(Y) \stackrel{\varphi^*}{\rightarrow} \pic(X) \stackrel{[L] \mapsto L \cdot \Gamma_i}{\longrightarrow} \Z. 
\end{equation}
\item If the contraction is divisorial, the variety $Y$ has at most terminal $\Q$-factorial singularities and its Picard number 
is $\rho(X)-1$. 
\item If the contraction is small with flip $X^+ \rightarrow Y$, the variety $X^+$ 
has at most terminal $\Q$-factorial singularities and its Picard number 
is $\rho(X)$. 
\end{enumerate}
\end{proposition}

\begin{proof}
Note first that the morphism $\varphi$ is projective since $-K_X$ is $\varphi$-ample.
Moreover by \cite[1-2-5]{KMM87}, \cite{Nak87} we have $R^j \varphi_* \sO_X=0$ for all $j \geq 1$.
In particular $Y$ has at most rational singularities.
Statement a) is thus a special case of Lemma \ref{pullback}.

{\em Proof of statement b).}
The non-trivial part of this statement is to prove that if $L$ is a line bundle on $X$ such that $L \cdot \Gamma_i=0$, 
there exists a line bundle $L'$ on $Y$ such that $L \simeq \varphi^* L'$. 
Note that if such a $L'$ exists, we have
$$
L' \simeq \varphi_* L.
$$
Thus $L'$ is unique up to isomorphism and it is sufficient to prove that the direct image sheaf $\varphi_* L$ is locally free.
This is a local property, so fix an arbitrary point $y \in Y$ and a small Stein neighbourhood
$y \in U \subset Y$. Set $X_U:=\fibre{f}{U}$, then the morphism $\holom{f:=\varphi|_{X_U}}{X_U}{U}$ satisfies the conditions
of the relative base point free theorem \cite[Thm.3.3]{Anc87}. Thus $L^{\otimes b}|_{X_U}$ is $f$-globally generated for {\em all} $b \gg 0$. The base $U$ being Stein 
we see that $L^{\otimes b}|_{X_U}$ is globally generated for {\em all} $b \gg 0$. 
Since $L$ is $f$-numerically trivial we have 
$$
L^{\otimes b}|_{X_U} \simeq f^* M, \ L^{\otimes b+1}|_{X_U} \simeq f^* N
$$
for some line bundles $M$ and $N$ on $U$. In particular we have $f_*(L|_{X_U}) \simeq N \otimes M^*$. 

{\em Proof of statements c) and d).}
These are a well-known consequences of statement b), compare the proof of \cite[Prop.7.44]{Deb01}. 
\end{proof}

\begin{corollary} \label{corollarycontractionkaehler}
Let $X$ be a  normal  $\Q$-factorial compact K\"ahler threefold with at most terminal singularities.
Let $\R^+ [\Gamma_i]$ be a $K_X$-negative extremal ray in $\NAX$.
Suppose that there exists a bimeromorphic morphism \holom{\varphi}{X}{Y} 
such that $-K_X$ is $\varphi$-ample and
a curve $C \subset X$ is contracted if and only if $[C] \in \R^+ [\Gamma_i]$.
Then $Y$ is a K\"ahler space. 
\end{corollary}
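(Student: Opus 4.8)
The plan is to realise $Y$ as a K\"ahler space by applying Theorem \ref{kaehler2}: I will descend the nef supporting class of $R=\R^+[\Gamma_i]$ to a closed real $(1,1)$-class on $Y$ that is strictly positive on $\NA{Y}\setminus\{0\}$, after checking that $Y$ lies in class $\sC$ and has only isolated singularities, and that every irreducible curve on $Y$ has nonzero class in $N_1(Y)$. These are exactly the hypotheses of Theorem \ref{kaehler2}.

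First I would record the two structural facts about $Y$. Since $\varphi$ is bimeromorphic, $Y$ is bimeromorphic to the K\"ahler space $X$; a desingularisation of $X$ is K\"ahler by Remark \ref{remarkkaehler} and dominates $Y$, so $Y\in\sC$. For the singularities, $X$ has terminal and hence isolated singularities, and $\varphi$ is an isomorphism away from the $1$-dimensional contracted locus, whose image in $Y$ is a finite set; in the divisorial case Proposition \ref{propositioncontraction}(c) moreover shows that $Y$ is again terminal. In either case $\mathrm{Sing}(Y)$ is finite.

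Next I would descend the supporting class. Let $\alpha\in N^1(X)$ be a nef supporting class for $R$ as in Proposition \ref{propositionperp}, so $\alpha\cdot\Gamma_i=0$ and $R=\{z\in\NAX:\alpha\cdot z=0\}$, with $\alpha$ strictly positive on the rest of $\NAX$. Because $\alpha\cdot\Gamma_i=0$, the exact sequence of Proposition \ref{corollarycontraction} yields a class $\alpha_Y\in N^1(Y)$ with $\varphi^*\alpha_Y=\alpha$, which I represent by a closed real $(1,1)$-form $\eta_Y$. To check strict positivity, take $[T]\in\NA{Y}\setminus\{0\}$ and use Proposition \ref{ext} to write $[T]=\varphi_*[S]$ with $[S]\in\NAX$. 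By the projection formula $\alpha_Y\cdot[T]=\varphi^*\alpha_Y\cdot[S]=\alpha\cdot[S]\ge 0$; were this zero, Proposition \ref{propositionperp} would give $[S]\in R$, and since the generator $\Gamma_i$ of $R$ is contracted we would get $[T]=\varphi_*[S]=0$, a contradiction. Hence $\alpha_Y\cdot[T]>0$ throughout $\NA{Y}\setminus\{0\}$.

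Finally, for an arbitrary irreducible curve $C_Y\subset Y$ I would produce an irreducible curve $\tilde C\subset X$ mapping finitely and surjectively onto $C_Y$: if $C_Y$ is not contained in the image of the exceptional locus, take its strict transform; in the one remaining divisorial situation where $C_Y$ is the image curve, take a general hyperplane section of the contracted surface $S$, which is projective by Lemma \ref{lemmairreducible} and thus carries an irreducible multisection of $S\to C_Y$. Such a $\tilde C$ is not contracted, so $[\tilde C]\notin R$ and $\alpha\cdot\tilde C>0$; then $\alpha_Y\cdot\varphi_*[\tilde C]=\alpha\cdot\tilde C>0$ forces $\varphi_*[\tilde C]\ne 0$, and as $\varphi_*[\tilde C]$ is a positive multiple of $[C_Y]$ we conclude $[C_Y]\ne 0$ in $N_1(Y)$. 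Theorem \ref{kaehler2} applied to $\eta_Y$ then shows $\{\eta_Y\}$ is a K\"ahler class, so $Y$ is K\"ahler. The formal descent of $\alpha$ is routine once Proposition \ref{corollarycontraction} is available; I expect the genuine obstacle to be the geometric verification of the hypotheses of Theorem \ref{kaehler2} --- controlling the contracted locus well enough to guarantee isolated singularities (notably for small contractions, where $Y$ need not be $\Q$-factorial) and to guarantee that no curve on $Y$ becomes numerically trivial.
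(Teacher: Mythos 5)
Your proposal is correct and follows essentially the same route as the paper: descend the nef supporting class $\alpha$ to $Y$ via Proposition \ref{corollarycontraction}, use the surjectivity of $\varphi_*$ on $\NAX$ (Proposition \ref{ext}) to get strict positivity on $\NA{Y}\setminus\{0\}$, rule out numerically trivial curves on $Y$ by pulling back to a non-contracted curve (with the multisection trick in the divisor-to-curve case), and conclude with Theorem \ref{kaehler2}. Your explicit verification of the class-$\sC$ and isolated-singularities hypotheses is a welcome addition that the paper leaves implicit.
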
 

\begin{proof} 
Let $\alpha$ be a nef supporting class (cf. Proposition \ref{propositionperp}) for the extremal ray $\R^+ [\Gamma_i]$. 
Then $\alpha \cdot \Gamma_i = 0$, so by Proposition \ref{propositioncontraction}a) we have 
$\alpha  = \varphi^*(\alpha')$ with some $\alpha' \in N^1(Y)$. 
Since $\alpha \cdot \beta > 0$ for all $\beta \in \NAX \setminus \R^+ [\Gamma_i]$ and the map
$\varphi_*: \NAX \rightarrow \overline{NA}(Y)$ is surjective by Proposition \ref{ext}, we see that
$\alpha'$ is strictly positive on the closed cone $\overline{NA}(Y)$.
Hence $\alpha'$ is a K\"ahler class by Theorem~\ref{kaehler2}, once we have verified that $Y$ does not contain an irreducible
curve $C$ homologous to $0.$ This is clear if the extremal ray $\R^+ [\Gamma_i]$ is small or $\varphi$ contracts a divisor
to a point. Suppose now that $\varphi$ contracts a divisor $E$ to a curve $C' \subset Y$; this curve being the only candidate for a curve being homologous to $0.$ 
Since the morphism $\varphi|_E: E \rightarrow C'$ is
projective, so does $E$, and thus there exists a curve $C \subset E \subset X$ such that $\varphi(C)=C'$. Thus for some positive number $d$ we have 
$$
\alpha' \cdot (d C') = \alpha' \cdot \varphi_*(C) =  \alpha \cdot C > 0,
$$
since $0 \neq [C] \in \NAX$ and $[C] \not\in \R^+ [\Gamma_i]$.
Thus we have $[C'] \neq 0$.     
\end{proof}

\begin{proof}[Proof of Theorem  \ref{theoremcontraction}]
The existence of a morphism \holom{\varphi}{X}{Y} contracting exactly the curves in the extremal ray 
is established in the Corollaries \ref{corollarypoint}, \ref{corollarycurve} and Theorem \ref{theoremsmallray}.
Moreover $Y$ is a K\"ahler space by Corollary \ref{corollarycontractionkaehler}.
\end{proof}

\begin{proof}[Proof of Theorem \ref{theoremminimalmodel}]

{\em Step 1: Running the MMP.} If $K_X$ is nef, we are finished. 
If $K_X$ is not nef, there exists by Theorem \ref{theoremNA} a $K_X$-negative extremal ray $R$ in $\NAX$.
By Theorem \ref{theoremcontraction} the contraction  \holom{\varphi}{X}{Y} of $R$ exists.
If $R$ is divisorial we can continue the MMP with $Y$ by Proposition \ref{propositioncontraction},c).  
If $R$ is small, we know by Mori's flip theorem \cite[Thm.0.4.1]{Mor88} that the flip $\holom{\varphi^+}{X^+}{Y}$ exists, and
by Proposition \ref{propositioncontraction},d) we can continue the MMP with $X^+$. 

{\em Step 2: Termination of the MMP.} Recall that for a normal compact threefold $X$ with at most terminal singularities, 
the difficulty $d(X)$ \cite{Sho85} is defined by
$$
d(X) := \# \{ i \ | \ a_i < 1 \},
$$
where $K_Y = \mu^* K_X + \sum a_i E_i$ and $\holom{\mu}{Y}{X}$ is any resolution of singularities.
By \cite[Lemma 5.1.16]{KMM87}, \cite{Sho85} we have $d(X)>d(X^+)$, if $X^+$ is the flip of a small contraction.
Since the Picard number and the difficulty are non-negative integers, any MMP terminates after finitely many steps.
\end{proof}

\newcommand{\etalchar}[1]{$^{#1}$}
\def\cprime{$'$}

\end{document}